\newtheorem{theorem}{Theorem}[section]
\newtheorem{corollary}[theorem]{Corollary}
\newtheorem{lemma}[theorem]{Lemma}
\theoremstyle{definition}
\newtheorem{definition}[theorem]{Definition}
\newtheorem{example}[theorem]{Example}
\newtheorem{remark}[theorem]{Remark}
\numberwithin{equation}{section}
\title[Nonlinear Spectral Resolution]{Nonlinear Spectral Resolution}
\author{Wen Hsiang Wei}
\address{Department of Statistics, Tung Hai University, Taiwan}
\email{wenwei@thu.edu.tw}
\keywords{Banach algebra, Generalized eigenvectors,
Nonlinear operators, Quasi-product, Spectral resolution}
\subjclass[2010]{Primary 47H99; Secondary 46H30}
\begin{document}

\begin{abstract}

A quasi-product on the normed space is defined. In addition, the notions of the
eigenvectors of a linear operator can be extended for the nonlinear operator.
Based on the quasi-product and the generalized eigenvectors, the spectral
theorems for certain possibly nonlinear operators which include bounded linear symmetric
operators as special cases can be proved. The operational calculus of
the classes of possibly nonlinear operators is developed and nonlinear spectral
operators are given.

\end{abstract}

\maketitle


\section{Introduction}

Operator theory has been at the heart of research in analysis (see
\cite{abramovich}; \cite{pier}, Chapter 4). Moreover, as implied by
\cite{neuberger}, considering nonlinear case should be
essential. Developing useful results for the operators holds the promise for the
wide applications of nonlinear functional analysis to a variety of scientific
areas.

In classical functional analysis, the space of bounded linear operators is a
normed space endowed with a sensible norm. Further, by defining the
composition of two bounded linear operators as the operation of
multiplication, the space of the bounded linear operators is also a normed algebra.
In Section 2, a normed function is defined and some set of possibly
nonlinear operators from a normed space into a normed space turns out to be
a normed space. Further, if the domain and the range of the possibly nonlinear
operators are the same normed algebra, the normed space of the possibly nonlinear
operators can be a normed algebra by defining an operation of multiplication
for two operators.

Spectral theory is one of the main topics of modern functional analysis and its
applications (see \cite{kreyszig}; \cite{zeidler}). Spectral theory for certain
classes of linear operators has been well developed (see \cite{dunford}),
particularly linear symmetric operators in a
Hilbert space. Spectral theory for the nonlinear operators is an emerging field
in functional analysis (see
\cite{appell}). However, relatively little has been done for the spectral resolution of
the possibly nonlinear operator of interest and the associated extensions
which are the main objectives of this article. The inner product is important for
the development of spectral theory in Hilbert spaces. Therefore, in order to
develop the spectral decomposition of the possibly nonlinear operator on the normed
space, a "product" which is referred to as the quasi-product and plays the role
analogous to the inner product in the inner product space is proposed in next
section. Section 3 defines a generalized real definite operator which can be
considered as the generalization of the linear symmetric operator on the
Hilbert space. Furthermore, the generalized eigenvalue which can be
considered as the generalized version of the eigenvalue in the linear case is also
defined. The spectral theorems of the bounded generalized real definite
operator are given in the section. Finally, the extensions of the results in Section 3,
including the operational calculus of the generalized real definite
operator and the nonlinear spectral operators (see \cite{dunford2}), are given in Section 4.
Note that the results in this article can be used to prove the spectral theorems for
a more general class of possibly nonlinear operators which includes bounded linear normal
operators as special cases. In addition, the spectral representations for the unbounded
nonlinear operators can be also proved based on these results.

Hereafter \( D(F) \) and \( R(F) \) are denoted as the domain and the range of
an operator \( F
\), respectively, and the notation \( ||\cdot||_{Z} \) is denoted as the norm of the
normed space \( Z \). The space of interest is the normed space implicitly. On
the other hand, the Banach space or the normed algebra will be indicated
explicitly. Note that the vector spaces and the normed spaces of interest in this
article are not trivial, i.e., not only including the zero element. In this article,  only the proofs of the main results are given.
The proofs of other lemmas, theorems, and corollaries are delegated to the
supplementary materials which can be found at \\
\centerline{\bf http://web.thu.edu.tw/wenwei/www/supplement.pdf/}

\section{Nonlinear functional spaces}

\subsection{Basics}

Let \( X \) and \( Y \) be the normed spaces over the field \( K \) with some
sensible norms, where \( K \) is either a real field \( R \) or a complex field \( C
\). Let
\( V(X,Y) \) be the set of all operators from \( X  \) into \( Y \), i.e.,
the set of arbitrary maps from \( X \) into \( Y \).
Let the algebraic operations of \( F_{1}, F_{2} \in V(X,Y) \) be the operators with
\( (F_{1}+F_{2})(x)=F_{1}(x)+F_{2}(x) \) and
\( (\alpha F_{1})(x)=\alpha F_{1}(x) \) for \( x \in X \), where \( \alpha \in K \) is a
scalar. Also let the zero element in
\( V(X,Y) \) be the operator with the image equal to the zero element in \( Y \).

In this subsection, the basic properties of the nonlinear functional spaces are
given. The proofs of several theorems and corollaries, including Theorem~\ref{thm:2},
Corollary~\ref{cor:1}, Theorem~\ref{thm:3}, Corollary~\ref{cor:2}, and Theorem~\ref{thm:5}, are quite routine and are delegated to
the supplementary materials.

\begin{theorem}\label{thm:1}

\( V(X,Y) \) is a vector space over the field \( K \).

\end{theorem}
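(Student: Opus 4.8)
The plan is to verify directly that $V(X,Y)$ satisfies the eight axioms of a vector space over $K$, exploiting the fact that the algebraic operations on $V(X,Y)$ are defined pointwise and that $Y$, being a normed space over $K$, is itself a vector space over $K$. The guiding principle throughout is that an identity between operators in $V(X,Y)$ holds precisely when it holds after evaluation at every $x \in X$, at which point it reduces to the corresponding identity in $Y$.

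First I would check closure: given $F_{1}, F_{2} \in V(X,Y)$ and $\alpha \in K$, the assignments $x \mapsto F_{1}(x) + F_{2}(x)$ and $x \mapsto \alpha F_{1}(x)$ are well-defined maps from $X$ into $Y$ because $Y$ is closed under addition and scalar multiplication; hence $F_{1} + F_{2}$ and $\alpha F_{1}$ again lie in $V(X,Y)$. Next, for the additive group structure, commutativity and associativity of $+$ on $V(X,Y)$ follow by evaluating at an arbitrary $x$ and invoking commutativity and associativity of $+$ in $Y$; the operator whose image is the zero element of $Y$ serves as the additive identity, since $(F + 0)(x) = F(x) + 0 = F(x)$ for all $x$; and the additive inverse of $F$ is $(-1)F$, because $(F + (-1)F)(x) = F(x) - F(x) = 0$ in $Y$ for every $x$.

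Finally I would dispatch the four scalar-multiplication axioms, namely $1 \cdot F = F$, $(\alpha\beta)F = \alpha(\beta F)$, $\alpha(F_{1} + F_{2}) = \alpha F_{1} + \alpha F_{2}$, and $(\alpha + \beta)F = \alpha F + \beta F$, in the same manner, each reducing to the corresponding axiom in $Y$ after evaluation at a generic point of $X$.

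I do not expect any genuine obstacle: the argument is entirely routine, and the only point meriting explicit mention is that closure, i.e.\ well-definedness of the pointwise operations as elements of $V(X,Y)$, rests on $Y$ being a vector space, which is part of the standing hypothesis that $Y$ is a normed space over $K$.
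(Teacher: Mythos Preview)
Your proposal is correct and is precisely the routine verification the paper has in mind; indeed, the paper does not present a proof at all, stating only that ``the routine proof of the above theorem is not presented.'' Your pointwise reduction of each vector-space axiom to the corresponding axiom in $Y$ is exactly the standard argument one would supply here.
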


The routine proof of the above theorem is not presented. Define a
non-negative extended real-valued function
\( p \), i.e., the range of \( p \) including \( \infty \), on \( V(X,Y) \) by
\begin{eqnarray*}
p(F)=\max\left(\sup_{x \neq 0,x \in X} \frac{\|F(x)\|_{Y}}{\|x\|_{X}},
\|F(0)\|_{Y}\right)
\end{eqnarray*}
for \( F \in V(X,Y) \). The non-negative extended real-valued function \( p \) is a
generalization of the norm for the linear operators. Let
\( B(X,Y) \), the subset of \( V(X,Y) \), consist of all operators with
\( p(F) \) being finite.

\begin{theorem}\label{thm:2}

\( p \) is a norm on \( B(X,Y) \) and \( [B(X,Y),p] \) is a normed space.

\end{theorem}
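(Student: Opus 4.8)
The plan is to proceed in two stages: first verify that $B(X,Y)$ is a linear subspace of $V(X,Y)$ (so that it is meaningful to speak of the normed space $[B(X,Y),p]$), and then check the three norm axioms for the restriction of $p$ to $B(X,Y)$. Throughout I would exploit the pointwise definition of the algebraic operations on $V(X,Y)$ together with the norm axioms on $Y$, treating the two ingredients of the $\max$ defining $p$ separately. The standing assumption that the spaces are nontrivial guarantees that the supremum in the definition of $p$ is taken over a nonempty set, so $p$ is unambiguously defined.

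For the subspace claim and the triangle inequality I would argue simultaneously. Given $F_{1},F_{2}\in V(X,Y)$, the triangle inequality in $Y$ gives $\|(F_{1}+F_{2})(x)\|_{Y}=\|F_{1}(x)+F_{2}(x)\|_{Y}\le\|F_{1}(x)\|_{Y}+\|F_{2}(x)\|_{Y}$ for every $x\in X$. Dividing by $\|x\|_{X}$ when $x\neq 0$ and passing to the supremum, and separately evaluating at $x=0$, yields $\sup_{x\neq 0}\|(F_{1}+F_{2})(x)\|_{Y}/\|x\|_{X}\le p(F_{1})+p(F_{2})$ and $\|(F_{1}+F_{2})(0)\|_{Y}\le p(F_{1})+p(F_{2})$; taking the maximum of the two left-hand sides gives $p(F_{1}+F_{2})\le p(F_{1})+p(F_{2})$. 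In particular, if $p(F_{1})$ and $p(F_{2})$ are finite then so is $p(F_{1}+F_{2})$, so $B(X,Y)$ is closed under addition. Closure under scalar multiplication and absolute homogeneity I would obtain from $\|(\alpha F)(x)\|_{Y}=|\alpha|\,\|F(x)\|_{Y}$, which rescales both entries of the $\max$ by the factor $|\alpha|$, hence $p(\alpha F)=|\alpha|\,p(F)$ (the case $\alpha=0$ being immediate since $p(F)<\infty$ for $F\in B(X,Y)$); and the zero operator plainly satisfies $p=0$. This establishes that $B(X,Y)$ is a subspace of $V(X,Y)$.

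It then remains to check positive definiteness of $p$ on $B(X,Y)$. Non-negativity is clear since $p(F)$ is the maximum of two non-negative quantities. If $p(F)=0$, then $\|F(0)\|_{Y}=0$ forces $F(0)=0$, while $\sup_{x\neq 0}\|F(x)\|_{Y}/\|x\|_{X}=0$ forces $\|F(x)\|_{Y}=0$, i.e. $F(x)=0$, for every $x\neq 0$; hence $F$ is the zero operator. The converse implication is trivial. Together with the previous paragraph, this shows that $p$ is a norm and that $[B(X,Y),p]$ is a normed space.

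I do not anticipate a genuine obstacle: the argument is entirely formal once one observes that $B(X,Y)$ is carved out precisely as the finiteness locus of the extended-real-valued function $p$. The only point requiring slight care is keeping the extended-real arithmetic under control — ensuring that manipulations such as scaling by $|\alpha|$ or adding two values of $p$ are carried out only after restricting to operators with finite $p$, which is exactly what the definition of $B(X,Y)$ provides.
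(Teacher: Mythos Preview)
Your proof is correct and complete. The paper explicitly labels this result as ``quite routine'' and delegates the argument to supplementary materials, so there is no in-text proof to compare against; your verification of the subspace property together with the three norm axioms is exactly the standard approach one expects here, and your attention to the nontriviality hypothesis (so that the supremum is over a nonempty set) and to the extended-real bookkeeping for homogeneity is appropriate.
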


Hereafter the norm \( p \) is used, i.e., \( ||F||_{B(X,Y)}=p(F) \) for \( F \in
B(X,Y)
\). Note that the bounded linear operators fall in
\( B(X,Y) \). As \( X=Y \), the notation
\( B(X)=B(X,X) \) is used.

Let the notation of the composition of two operators be \( \circ \) hereafter.

\begin{corollary}\label{cor:1}

Let \( F_{1} \in B(X,Y) \) and \( F_{2} \in B(Y,Z) \), where \( Z \) is a normed
space. Then
\begin{eqnarray*}
\|F_{1}(x)\|_{Y} \leq \|F_{1}\|_{B(X,Y)}\|x\|_{X},x \neq 0.
\end{eqnarray*}
 If \( F_{1}(0)=F_{2}(0)=0 \), then
\begin{eqnarray*}
\|F_{2} \circ F_{1}\|_{B(X,Z)} \leq \|F_{1}\|_{B(X,Y)}\|F_{2}\|_{B(Y,Z)}.
\end{eqnarray*}

\end{corollary}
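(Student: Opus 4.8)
The plan is to read both inequalities straight off the definition of $p$. For the first inequality, fix $x \neq 0$ in $X$. By definition,
\[
p(F_{1}) = \max\left(\sup_{x' \neq 0,\, x' \in X} \frac{\|F_{1}(x')\|_{Y}}{\|x'\|_{X}}, \|F_{1}(0)\|_{Y}\right) \geq \sup_{x' \neq 0,\, x' \in X} \frac{\|F_{1}(x')\|_{Y}}{\|x'\|_{X}} \geq \frac{\|F_{1}(x)\|_{Y}}{\|x\|_{X}},
\]
and since $F_{1} \in B(X,Y)$ this quantity is finite; multiplying through by $\|x\|_{X} > 0$ and using $\|F_{1}\|_{B(X,Y)} = p(F_{1})$ gives $\|F_{1}(x)\|_{Y} \leq \|F_{1}\|_{B(X,Y)}\|x\|_{X}$ for all $x \neq 0$.

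For the second inequality, assume $F_{1}(0) = F_{2}(0) = 0$. First note $(F_{2} \circ F_{1})(0) = F_{2}(F_{1}(0)) = F_{2}(0) = 0$, so the term $\|(F_{2}\circ F_{1})(0)\|_{Z} = 0$ contributes nothing to $p(F_{2}\circ F_{1})$, and hence $p(F_{2}\circ F_{1}) = \sup_{x \neq 0,\, x\in X} \|F_{2}(F_{1}(x))\|_{Z}/\|x\|_{X}$. Now take any $x \neq 0$. If $F_{1}(x) = 0$ then $F_{2}(F_{1}(x)) = F_{2}(0) = 0$ and the bound is trivial; if $F_{1}(x) \neq 0$, apply the first inequality twice — once to $F_{2}$ at the point $F_{1}(x)$, then to $F_{1}$ at the point $x$ — to get
\[
\|F_{2}(F_{1}(x))\|_{Z} \leq \|F_{2}\|_{B(Y,Z)}\|F_{1}(x)\|_{Y} \leq \|F_{2}\|_{B(Y,Z)}\|F_{1}\|_{B(X,Y)}\|x\|_{X}.
\]
Dividing by $\|x\|_{X}$ and taking the supremum over $x \neq 0$ yields $\|F_{2}\circ F_{1}\|_{B(X,Z)} \leq \|F_{1}\|_{B(X,Y)}\|F_{2}\|_{B(Y,Z)}$.

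There is no real obstacle here; the only point that requires the hypothesis $F_{1}(0) = F_{2}(0) = 0$ is ensuring that the composition's value at $0$ vanishes so that the $\max$ in $p$ reduces to the supremum term — without it one could not absorb $\|(F_{2}\circ F_{1})(0)\|_{Z}$ into the product of the two norms, since in the nonlinear setting $F_{2}(0)$ need not be controlled by $\|F_{1}\|_{B(X,Y)}$ at all. The case split on whether $F_{1}(x) = 0$ is needed because the first inequality as stated only applies at nonzero arguments, but this case is handled immediately by $F_{2}(0) = 0$.
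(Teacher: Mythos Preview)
Your proof is correct and follows exactly the routine argument the paper has in mind; the paper defers this proof to its supplementary materials, calling it ``quite routine,'' and your direct reading from the definition of $p$ together with the case split on whether $F_{1}(x)=0$ is precisely the expected approach.
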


Let \( BC(X,Y) \), the subspace of \( B(X,Y )\), consist of the bounded continuous
operators. It is well known that the space of all bounded linear operators from
a normed space X to a Banach space Y is complete. The analogous results also
hold for the spaces
\( B(X,Y) \) and \( BC(X,Y) \).

\begin{theorem}\label{thm:3}

If \( Y \) is a Banach space, then \( B(X,Y)\) and \( BC(X,Y) \) are Banach spaces.

\end{theorem}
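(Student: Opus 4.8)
The plan is to run the classical completeness argument for spaces of operators, checking that nothing in it actually uses linearity. Let $(F_n)$ be a Cauchy sequence in $[B(X,Y),p]$. First I would show that $(F_n(x))$ is Cauchy in $Y$ for each fixed $x \in X$: for $x \neq 0$ this follows from the first inequality of Corollary~\ref{cor:1}, namely $\|F_n(x)-F_m(x)\|_Y = \|(F_n-F_m)(x)\|_Y \leq \|F_n-F_m\|_{B(X,Y)}\,\|x\|_X$ (note $F_n-F_m \in B(X,Y)$ by Theorem~\ref{thm:2}), while for $x=0$ we have $\|F_n(0)-F_m(0)\|_Y \leq \|F_n-F_m\|_{B(X,Y)}$ directly from the definition of $p$. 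Since $Y$ is complete, the pointwise limit $F(x) := \lim_n F_n(x)$ exists, defining an operator $F \in V(X,Y)$.

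Next I would verify $F \in B(X,Y)$ and $F_n \to F$ in $p$. A Cauchy sequence is bounded, so $\|F_n\|_{B(X,Y)} \leq M$ for some $M$ and all $n$; letting $n\to\infty$ in $\|F_n(x)\|_Y \leq M\|x\|_X$ (for $x\neq 0$) and in $\|F_n(0)\|_Y \leq M$ gives $p(F) \leq M < \infty$, so $F \in B(X,Y)$. For norm convergence: given $\varepsilon>0$ pick $N$ with $\|F_n-F_m\|_{B(X,Y)} \leq \varepsilon$ for $n,m\geq N$; for fixed $n\geq N$ and $x\neq 0$, let $m\to\infty$ in $\|F_n(x)-F_m(x)\|_Y \leq \varepsilon\|x\|_X$ to obtain $\|F_n(x)-F(x)\|_Y \leq \varepsilon\|x\|_X$, and similarly $\|F_n(0)-F(0)\|_Y \leq \varepsilon$; taking the appropriate supremum yields $p(F_n-F) \leq \varepsilon$. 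Hence $B(X,Y)$ is complete, i.e., a Banach space.

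For $BC(X,Y)$, since it is a subspace of $B(X,Y)$ it suffices to show it is closed, i.e., that the limit $F$ constructed above is continuous whenever every $F_n$ is. The estimate $\|F_n(x)-F(x)\|_Y \leq p(F_n-F)\,\max(\|x\|_X,1)$, valid for all $x$, shows that $F_n \to F$ uniformly on every bounded subset of $X$. A standard $\varepsilon/3$ argument then gives continuity of $F$ at an arbitrary $x_0 \in X$: restricting attention to $x$ in a bounded neighborhood of $x_0$, write $\|F(x)-F(x_0)\|_Y \leq \|F(x)-F_n(x)\|_Y + \|F_n(x)-F_n(x_0)\|_Y + \|F_n(x_0)-F(x_0)\|_Y$, choose $n$ large enough to control the two outer terms uniformly on that neighborhood, and then invoke continuity of $F_n$ to control the middle term for $x$ near $x_0$. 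Thus $F \in BC(X,Y)$, so $BC(X,Y)$ is a closed subspace of the Banach space $B(X,Y)$ and hence itself a Banach space.

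I expect the only real subtlety to be bookkeeping: keeping the $\max$ and the separate $x=0$ term in the definition of $p$ consistent at every step, and phrasing the continuity argument on bounded neighborhoods so that the uniform convergence furnished by the $p$-norm genuinely applies. No step requires $F_n$ or $F$ to be linear, which is exactly why the classical proof carries over to this setting.
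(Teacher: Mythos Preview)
Your argument is correct and is precisely the routine completeness proof the paper has in mind; the paper itself delegates this proof to the supplementary materials, explicitly describing it as ``quite routine.'' Your handling of the two subtleties you flag---the separate treatment of $x=0$ in the definition of $p$, and restricting to a bounded neighborhood so that the $p$-norm convergence yields the uniform convergence needed for the $\varepsilon/3$ continuity argument---is sound.
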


Unlike a linear operator, a continuous nonlinear operator \( F \) might not be
bounded. The following corollary gives sufficient conditions for the
boundedness of a continuous operator.

\begin{corollary}\label{cor:2}

Let \( \mathcal{K} \) be a compact subset of \( X \). Let \( F: X \rightarrow Y \)
be continuous on
\( \mathcal{K} \).
\( F \in B(X,Y) \) if the following conditions hold: \\
(a) \( \|F(\mathcal{K}^{c})\|_{B(X,Y)} \leq M \), where \( \mathcal{K}^{c} \) is the complement of the set  \( \mathcal{K} \) and
\( M \) is a positive
number. \\
(b) \( \lim_{x \rightarrow 0}\|F(x)\|_{Y}/\|x\|_{X} \) exists and is
finite.

\end{corollary}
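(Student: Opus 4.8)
The plan is to show directly that $p(F)<\infty$, which by the definition of $p$ amounts to bounding the two quantities $\|F(0)\|_{Y}$ and $S:=\sup_{x\neq 0}\|F(x)\|_{Y}/\|x\|_{X}$. The first is automatically finite: if $0\in\mathcal{K}$ then $F(0)\in Y$ because $F$ is defined (indeed continuous) there, and if $0\in\mathcal{K}^{c}$ then $\|F(0)\|_{Y}\le M$ by hypothesis (a) (reading (a) as the assertion that $\|F(x)\|_{Y}/\|x\|_{X}\le M$ for every $x\in\mathcal{K}^{c}$ with $x\neq 0$, together with $\|F(0)\|_{Y}\le M$ when $0\notin\mathcal{K}$). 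So the entire problem reduces to bounding $S$, and I would do this by splitting $X\setminus\{0\}$ into the points lying in $\mathcal{K}^{c}$ and the points lying in $\mathcal{K}$.

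Over $\mathcal{K}^{c}$ the supremum of $\|F(x)\|_{Y}/\|x\|_{X}$ is at most $M$ by (a), so all the work is over $\mathcal{K}\setminus\{0\}$. Here I would first invoke compactness: since $F$ is continuous on the compact set $\mathcal{K}$, the image $F(\mathcal{K})$ is compact, hence bounded, in $Y$, so $M':=\sup_{x\in\mathcal{K}}\|F(x)\|_{Y}<\infty$. This controls the numerator but not the quotient, since $\|x\|_{X}$ can be arbitrarily small when $0\in\mathcal{K}$. This is exactly where hypothesis (b) is needed, and I expect it to be the only delicate point of the argument: by (b) there is a finite number $L=\lim_{x\to 0}\|F(x)\|_{Y}/\|x\|_{X}$, so one may fix $r>0$ such that $\|F(x)\|_{Y}/\|x\|_{X}\le L+1$ for all $x$ with $0<\|x\|_{X}<r$.

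With such an $r$ chosen, the points of $\mathcal{K}\setminus\{0\}$ split once more: those with $\|x\|_{X}<r$ have ratio at most $L+1$, while those with $\|x\|_{X}\ge r$ have ratio at most $M'/r$. Combining the three regimes gives
\[
S\le\max\!\left(M,\;L+1,\;\frac{M'}{r}\right)<\infty,
\]
and since $\|F(0)\|_{Y}$ is bounded by $\max(M,M')$ as noted above, we conclude $p(F)<\infty$, i.e. $F\in B(X,Y)$.

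The argument is thus a routine compactness-plus-local-behavior estimate, and the only things requiring care are the precise interpretation of the informal notation in (a) and the observation that, without (b), the quotient $\|F(x)\|_{Y}/\|x\|_{X}$ could blow up along a sequence in $\mathcal{K}$ converging to $0$ even though $\|F(x)\|_{Y}$ stays bounded there; when $0\notin\mathcal{K}$ hypothesis (b) is in fact not needed, as one may then take $r=\inf_{x\in\mathcal{K}}\|x\|_{X}>0$.
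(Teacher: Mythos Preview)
Your argument is correct and is exactly the routine compactness-plus-local-behavior estimate one would expect here. The paper itself does not present a proof of this corollary in the main text; it explicitly states that the proof is ``quite routine'' and delegates it to the supplementary materials, so there is no in-paper proof to compare against. Your decomposition into $\mathcal{K}^{c}$, $\mathcal{K}\cap\{0<\|x\|_{X}<r\}$, and $\mathcal{K}\cap\{\|x\|_{X}\ge r\}$, together with the use of compactness to bound $\sup_{x\in\mathcal{K}}\|F(x)\|_{Y}$ and hypothesis (b) to control the ratio near the origin, is precisely the natural line of argument and is almost certainly what the authors intend.

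One minor remark: your discussion of $\|F(0)\|_{Y}$ is unnecessary, since $F(0)\in Y$ automatically makes $\|F(0)\|_{Y}$ a finite real number regardless of whether $0\in\mathcal{K}$; the only nontrivial quantity is the supremum $S$. Your interpretation of the informal notation $\|F(\mathcal{K}^{c})\|_{B(X,Y)}$ as the restricted $p$-norm is the correct reading given the paper's definition of $p$. Your closing observation that (b) is superfluous when $0\notin\mathcal{K}$ is also accurate and a nice complement to the statement.
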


The linear functionals in the dual space of a normed space can distinguish the
points of the normed space. Since the dual space of the normed space \( X \) is
a subspace of \( B(X,K) \), the results holds for the nonlinear functionals in
\( B(X,K) \). Further, the following theorem gives the counterpart of the one for the bounded
linear functional in the second dual space.

\begin{theorem}\label{thm:4}

Let \( x \neq 0 \) and \( g_{x}: B(X,K) \rightarrow K \) be defined by \(
g_{x}(F)=F(x)
\) for \( F
\in B(X,K) \). Then \( g_{x} \in B[ B(X,K),K] \) and
\begin{eqnarray*}
\left|\left|x\right|\right|_{X}=\sup_{F \neq 0,F \in B(X,K)}\frac{|F(x)|}{\|F\|_{B(X,K)}}
=\left|\left|g_{x}\right|\right|_{B[B(X,K),K]}.
\end{eqnarray*}

\end{theorem}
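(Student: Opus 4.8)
The plan is to establish the two claimed equalities by a chain of inequalities, with the nontrivial direction coming from the Hahn--Banach theorem. First I would check that $g_x \in B[B(X,K),K]$: for $F \in B(X,K)$ with $F \neq 0$ we have, by Corollary~\ref{cor:1}, $|g_x(F)| = |F(x)| \leq \|F\|_{B(X,K)}\|x\|_X$ since $x \neq 0$, while $g_x(0) = 0$. Hence $p(g_x) = \sup_{F \neq 0} |F(x)|/\|F\|_{B(X,K)} \leq \|x\|_X < \infty$, so $g_x \in B[B(X,K),K]$ and $\|g_x\|_{B[B(X,K),K]} \leq \|x\|_X$. This simultaneously gives ``$\leq$'' in the first equality of the displayed formula and the inequality $\|g_x\|_{B[B(X,K),K]} \leq \|x\|_X$; note the middle term of the display is by definition exactly $\|g_x\|_{B[B(X,K),K]}$, so once the supremum is interpreted correctly these two quantities are literally equal, and the only real content is the reverse inequality $\|x\|_X \leq \sup_{F \neq 0} |F(x)|/\|F\|_{B(X,K)}$.

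For the reverse inequality I would invoke the Hahn--Banach theorem in its normed-space form: there exists a bounded \emph{linear} functional $f$ on $X$ with $f(x) = \|x\|_X$ and $\|f\|_{X^*} = 1$ (in the complex case, the standard version producing $|f(x)| = \|x\|_X$ with unit norm). Since $X^* \subseteq B(X,K)$ and the norm $p$ restricted to bounded linear functionals coincides with the usual operator norm (because a linear functional vanishes at $0$, so the $\|F(0)\|_Y$ term in $p$ is inactive), we have $f \in B(X,K)$, $f \neq 0$, and $\|f\|_{B(X,K)} = \|f\|_{X^*} = 1$. Therefore
\begin{eqnarray*}
\sup_{F \neq 0,\, F \in B(X,K)} \frac{|F(x)|}{\|F\|_{B(X,K)}} \geq \frac{|f(x)|}{\|f\|_{B(X,K)}} = \|x\|_X,
\end{eqnarray*}
which combined with the first step yields equality throughout.

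The main obstacle is essentially bookkeeping rather than depth: one must be careful that the supremum defining the middle term of the display agrees with $p(g_x)$ as defined in the paper, which uses $\max$ of a supremum over $x \neq 0$ and a value at $0$ --- here the ``value at $0$'' term is $\|g_x(0)\|_K = 0$, so it is harmless and the $\max$ collapses to the supremum. One should also note that the supremum in the statement is taken over all of $B(X,K)$, not merely over $X^*$, so the Hahn--Banach functional being merely \emph{one} admissible competitor is enough for the lower bound, while Corollary~\ref{cor:1} applied to \emph{every} $F \in B(X,K)$ gives the matching upper bound; no linearity of the general $F$ is needed. I would close by remarking that this shows the canonical map $x \mapsto g_x$ is an isometric embedding of $X$ into $B[B(X,K),K]$, the nonlinear analogue of the canonical embedding into the second dual.
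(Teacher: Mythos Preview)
Your proposal is correct and follows essentially the same approach as the paper: Hahn--Banach supplies a unit-norm linear functional attaining $\|x\|_X$ for the lower bound, Corollary~\ref{cor:1} gives the upper bound, and the observation $g_x(0)=0$ identifies the supremum with $\|g_x\|_{B[B(X,K),K]}$. Your write-up is slightly more explicit about the bookkeeping (why $p$ agrees with the dual norm on $X^*$, why the $\max$ in $p(g_x)$ collapses), but the argument is the same.
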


\begin{proof} Since all bounded linear functionals fall in
\( B(X,K) \), then for any \( x \neq 0 \) there exists a bounded linear
functional \( F_{x} \in B(X,K) \) such that
\( ||F_{x}||_{B(X,K)}=1 \) and \( F_{x}(x)=||x||_{X} \) . Thus,
\begin{eqnarray*}
\sup_{F \neq 0,F \in B(X,K)}\frac{\left|F(x)\right|}{\|F\|_{B(X,K)}}
\geq \frac{\left|F_{x}(x)\right|}{\|F_{x}\|_{B(X,K)}}=\|x\|_{X}.
\end{eqnarray*}
On the other hand, \( ||x||_{X} \geq \sup_{F \neq 0,F \in B(X,K)}
\left|F(x)\right|/\|F\|_{B(X,K)} \) because by Corollary~\ref{cor:1},
\( |F(x)| \leq ||F||_{B(X,K)}||x||_{X} \). Finally,
\begin{eqnarray*}
\left|\left|g_{x}\right|\right|_{B[B(X,K),K]}
=\sup_{F \neq 0,F \in B(X,K)}\frac{|F(x)|}{\|F\|_{B(X,K)}}
=\left|\left|x\right|\right|_{X}.
\end{eqnarray*}
because \( g_{x}(0)=0 \).
\end{proof}

The Banach space of all bounded linear operators on a Banach space \( X
\) is a Banach algebra with the multiplication being the composition of the
operators. The Banach space \( B(X) \) is not a Banach algebra with the
multiplication being the composition of the operators and \( X \) being a Banach
space. However,
\( B(X) \) can be a normed algebra or a Banach algebra depending on
\( X \) being a normed algebra or a Banach algebra (see Theorem~\ref{thm:3}) as the multiplication
of the operators is defined properly.

\begin{theorem}\label{thm:5}

Let \( X \) be a normed (Banach) algebra. Define the multiplication of \( F_{1} \)
and
\( F_{2}
\) in
\( V(X,X) \) by
\begin{eqnarray*}
(F_{1}*F_{2})(x)=\frac{F_{1}(x)F_{2}(x)}{\|x\|_{X}},x\neq 0,x \in X,
\end{eqnarray*}
and
\begin{eqnarray*}
(F_{1}*F_{2})(0)=F_{1}(0)F_{2}(0).
\end{eqnarray*}
Then \( B(X) \) is a normed (Banach) algebra. As \( X \) has a unit,
\( B(X) \) is a normed (Banach) algebra with a unit \( e \).

\end{theorem}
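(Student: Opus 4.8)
The plan is to verify the algebra axioms for $(B(X),+,\cdot\,,*)$ one at a time and then assemble them with the structural results already established. By Theorems~\ref{thm:1} and~\ref{thm:2}, $B(X)$ is already a normed vector space under $p$, and by Theorem~\ref{thm:3} it is complete when $X$ is a Banach space; so the only new content is that $*$ is a well-defined, bilinear, associative multiplication on $B(X)$ whose norm is submultiplicative, together with the existence of a unit when $X$ has one.

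First I would check that $*$ sends $B(X)\times B(X)$ into $B(X)$ together with the submultiplicative estimate. For $F_{1},F_{2}\in B(X)$ and $x\neq 0$, submultiplicativity of the norm of $X$ gives
\[
\frac{\|(F_{1}*F_{2})(x)\|_{X}}{\|x\|_{X}}
=\frac{\|F_{1}(x)F_{2}(x)\|_{X}}{\|x\|_{X}^{2}}
\le\frac{\|F_{1}(x)\|_{X}}{\|x\|_{X}}\cdot\frac{\|F_{2}(x)\|_{X}}{\|x\|_{X}}
\le p(F_{1})\,p(F_{2}),
\]
while at $x=0$ one has $\|(F_{1}*F_{2})(0)\|_{X}=\|F_{1}(0)F_{2}(0)\|_{X}\le p(F_{1})p(F_{2})$. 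Taking the maximum of the two bounds yields $p(F_{1}*F_{2})\le p(F_{1})p(F_{2})<\infty$, so $F_{1}*F_{2}\in B(X)$ and $\|F_{1}*F_{2}\|_{B(X)}\le\|F_{1}\|_{B(X)}\|F_{2}\|_{B(X)}$.

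Next I would push the remaining ring identities down to the corresponding identities in $X$, checking them separately on $x\neq 0$ (where one divides by $\|x\|_{X}$) and on $x=0$. Bilinearity of $*$ — namely $(F_{1}+F_{2})*F_{3}=F_{1}*F_{3}+F_{2}*F_{3}$, $F_{1}*(F_{2}+F_{3})=F_{1}*F_{2}+F_{1}*F_{3}$, and $(\alpha F_{1})*F_{2}=\alpha(F_{1}*F_{2})=F_{1}*(\alpha F_{2})$ — follows immediately from the distributive law and the $K$-bilinearity of multiplication in $X$. Associativity is the one place where the normalizing factor must be tracked: for $x\neq 0$,
\[
\bigl((F_{1}*F_{2})*F_{3}\bigr)(x)=\frac{F_{1}(x)F_{2}(x)F_{3}(x)}{\|x\|_{X}^{2}}=\bigl(F_{1}*(F_{2}*F_{3})\bigr)(x)
\]
by associativity of the product in $X$, while both sides equal $F_{1}(0)F_{2}(0)F_{3}(0)$ at $x=0$. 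Combining this with the previous step and Theorems~\ref{thm:1}--\ref{thm:3} shows that $B(X)$ is a normed algebra, and a Banach algebra whenever $X$ is a Banach algebra (hence a Banach space).

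Finally, assuming $X$ has a unit $1_{X}$, I would exhibit the unit of $B(X)$ as the operator $e$ given by $e(x)=\|x\|_{X}\,1_{X}$ for $x\neq 0$ and $e(0)=1_{X}$. A direct computation gives $p(e)=\|1_{X}\|_{X}<\infty$, so $e\in B(X)$; and for every $F\in B(X)$, $(e*F)(x)=\|x\|_{X}1_{X}F(x)/\|x\|_{X}=F(x)=(F*e)(x)$ for $x\neq 0$, while $(e*F)(0)=1_{X}F(0)=F(0)=(F*e)(0)$, so $e*F=F*e=F$. I do not expect a genuine obstacle here; the only points requiring care are the bookkeeping of the $\|x\|_{X}$ factors in the associativity computation and the observation that this natural unit $e$ is bounded but \emph{discontinuous} at $0$ (so $e\in B(X)\setminus BC(X)$), which is precisely why the unital conclusion is stated for $B(X)$ rather than for $BC(X)$.
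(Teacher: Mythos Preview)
Your argument is correct and is exactly the routine verification the paper has in mind: the paper itself does not print a proof of this theorem but delegates it to the supplementary materials as ``quite routine,'' and your step-by-step check of closure, submultiplicativity, bilinearity, associativity (tracking the $\|x\|_X$ factors), and the explicit unit $e(x)=\|x\|_X\,1_X$ for $x\neq 0$, $e(0)=1_X$, is precisely that routine. Your closing observation that $e\in B(X)\setminus BC(X)$ is a nice bonus not stated in the paper.
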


\subsection{Quasi-product}

\begin{definition}\label{def:1}

Let \( X \) be a normed space and \( S \) be a subset of \( X \). A quasi-product
\( [\cdot,\cdot]_{S} \) on \( S \) is a mapping (or a map) of \( S \times S \) into the scalar field \( K \)
with the following properties: \\ (a)
\begin{eqnarray*}
[x,x]_{S} \geq 0
\end{eqnarray*}
for \( x \in S \). \\ (b)
\begin{eqnarray*}
\left|[x,y]_{S}\right| \leq \overline{c}\left|\left|x\right|\right|_{X}
\left|\left|y\right|\right|_{X}
\end{eqnarray*}
for \( x,y \in S \), where \( \overline{c} \) is a positive number. \\ (c)
\begin{eqnarray*}
\left[\sum_{i=1}^{n}\alpha_{i} x_{i},y\right]_{S}=c(y) \sum_{i=1}^{n}
\alpha_{i}\left[ x_{i},y\right]_{S}
\end{eqnarray*}
for any \( n \geq 1 \), \( x_{1}, \ldots,x_{n}, y, \sum_{i=1}^{n}\alpha_{i} x_{i}
\in S \) and \( \alpha_{1}, \ldots, \alpha_{n} \in K \), where \( c: S \rightarrow R \)
is a positive bounded function and is bounded away from 0.

A quasi-product is quasi-symmetric if and only if
\( [x,y]_{S}=q(x,y)\overline{[y,x]}_{S} \), where
\( \overline{z} \) is the conjugate of the complex number \( z \) and
\( q:S \times S \rightarrow R \) is a positive bounded function and
is bounded away from 0. A quasi-symmetric quasi-product is symmetric if and
only if \( [x,y]_{S} \) is equal to the conjugate of \( [x,y]_{S} \), i.e., \(
[x,y]_{S}=\overline{[y,x]}_{S} \).

\end{definition}

\begin{remark}\label{rem:1}

As \( S=X \), \( c(y)=\overline{c}=1 \),
\( [x,x]_{X}=0 \) implies \( x=0 \), and
the quasi-product is symmetric,
\( X \) is an inner product space with the inner product being
the quasi-product. On the other hand,
an inner product \( <\cdot,\cdot>_{X} \) on the inner product space \( X \) can be a symmetric
quasi-product by setting \( <x,y>_{X}=[x,y]_{X} \).

\end{remark}

The following are examples of the quasi-products on the normed spaces or the
subset of some normed space.

\begin{example}

Let \( X \) be an inner product space with an inner product \( <\cdot,\cdot>_{X}
\) and the norm induced by the inner product. Then
\begin{eqnarray*}
[x,y]_{X}=c(y)<x,y>_{X}
\end{eqnarray*}
for \( x,y \in X \), where \( c \) is a positive bounded function on \( X \) and is bounded
away from 0, for example,
\begin{eqnarray*}
c(y)=\frac{\left|\left|y\right|\right|_{X}} {\left|\left|y\right|\right|_{X}+1}+k
\end{eqnarray*}
and
\( k > 0 \).

\end{example}

\begin{example}

Let \( X \) be the real normed space of real-valued Lebesgue integrable
functions on the measurable set \( \Omega \subset R \) with the norm
\( ||x||_{X}=\int_{\Omega}|x|d\mu \) for \( x \in X \),
where \( \mu \) is the Lebesgue measure. Let the symmetric quasi-product
defined by
\begin{eqnarray*}
[x,y]_{X}=\int_{\Omega}xd\mu \int_{\Omega}yd\mu
\end{eqnarray*}
for \( x,y \in X \).

\end{example}

\begin{example}

Let \( X \) be the real normed space of real-valued bounded functions on the
compact domain \(
\Omega \in R \) with the supremum norm and the subset \( S \) of \( X \) consist
of the bounded Lebesgue integrable functions with positive integrals. Define
the non-symmetric quasi-product on \( S \) by
\begin{eqnarray*}
[x,y]_{S}=\int_{\Omega}xd\mu \sup_{t \in \Omega}y(t),
\end{eqnarray*}
where \( \mu \) is the Lebesgue measure.

\end{example}

Note that the quasi-product function \( [x,y]_{X}=f_{y}(x) \), considered as a
function of
\( x \), is continuous on \( X \). In addition,
if the quasi-product is symmetric, it is jointly continuous.

The bilinear form on a Hilbert space has a representation associated with the
continuous linear operator (see \cite{kreyszig}, Theorem 3.8-4; \cite{taylor},
Chapter VI, Theorem 1.2). The following theorem can be considered as the
nonlinear counterpart of the linear case.

\begin{theorem}\label{thm:6}

Let \( F \in B(X,Y) \) with \( F(0)=0 \) and there exists a positive function \( d
\) defined on \( Y \) and being bounded away from 0 such that
\( [y,y]_{Y} =d(y)||y||^{2}_{Y} \) for \( y \in Y \). Then \( h: X \times Y \rightarrow K \)
has a unique representation
\begin{eqnarray*}
h(x,y)=[F(x),y]_{Y}
\end{eqnarray*}
if and only if the function \( h \) has the following properties: \\ (a)
\begin{eqnarray*}
\left|h(x,y)\right| \leq
\overline{c} \left|\left|x\right|\right|_{X}\left|\left|y\right|\right|_{Y}
\end{eqnarray*}
for \( x \in X \) and \( y \in Y \), where \( \overline{c} \) is a positive number. \\
(b) For \( y \in Y \), there exists a \( z \in Y \) such that
\begin{eqnarray*}
h(x,y)=[z,y]_{Y}
\end{eqnarray*}
for any \( x \in X \).

\end{theorem}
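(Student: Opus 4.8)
**The plan is to prove both directions of the equivalence, treating the "only if" direction as essentially a verification and concentrating the real work on the "if" direction, where one must construct the operator $F$ and establish uniqueness.**

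First I would dispose of the "only if" direction. Assume $h(x,y)=[F(x),y]_Y$ for some $F\in B(X,Y)$ with $F(0)=0$. Property (b) is then immediate, taking $z=F(x)$ for the given $x$ (the point being that $z$ may depend on $x$ but not on the way we vary over $X$ — actually reading (b) carefully, $z$ depends only on $y$, so I should instead observe that once $x$ is fixed the value $h(x,y)=[F(x),y]_Y$ is of the required form; I will phrase (b) as: for each fixed $x$, the map $y\mapsto h(x,y)$ agrees with $y\mapsto[z,y]_Y$ for $z=F(x)$). Property (a) follows from quasi-product axiom (b) in Definition~\ref{def:1} together with Corollary~\ref{cor:1}: $|h(x,y)|=|[F(x),y]_Y|\le \overline{c}\,\|F(x)\|_Y\|y\|_Y\le \overline{c}\,\|F\|_{B(X,Y)}\|x\|_X\|y\|_Y$, so (a) holds with constant $\overline{c}\,\|F\|_{B(X,Y)}$.

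For the "if" direction, suppose $h$ satisfies (a) and (b). For each fixed $x\in X$, property (b) supplies a $z=z_x\in Y$ with $h(x,y)=[z_x,y]_Y$ for all $y\in Y$; I would define $F(x):=z_x$. The first task is \emph{well-definedness}: if $z,z'$ both represent $h(x,\cdot)$, then $[z-z',y]_Y=\ldots$ — here I must use the linearity-type axiom (c) of the quasi-product to expand $[z,y]_Y-[z',y]_Y$, but note axiom (c) only gives $[\alpha_1 z+\alpha_2 z',y]_Y=c(y)(\alpha_1[z,y]_Y+\alpha_2[z',y]_Y)$, so $[z-z',y]_Y=c(y)^{-1}([z,y]_Y-[z',y]_Y)$ provided $z-z'\in Y$, which holds since $Y$ is a vector space. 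Thus $[z-z',y]_Y=0$ for all $y$; choosing $y=z-z'$ and invoking the hypothesis $[y,y]_Y=d(y)\|y\|_Y^2$ with $d$ bounded away from $0$ forces $\|z-z'\|_Y=0$, so $z=z'$. This same hypothesis on $d$ is what makes the quasi-product "definite" enough to pin down $F$ uniquely, and it is the linchpin of the argument. Next, $F(0)=0$: taking $x=0$, $h(0,y)=[z_0,y]_Y$, and I would argue $h(0,y)=0$ for all $y$ — this needs $h(0,\cdot)\equiv 0$, which should follow from (a) is \emph{not} enough; rather I suspect one uses that $h(x,y)$ inherits from the representation a homogeneity, or one simply \emph{adds} $F(0)=0$ as part of what "unique representation" means and checks consistency via $[0,y]_Y=0$ (from axiom (c) with all $\alpha_i=0$). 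I would then verify $F\in B(X,Y)$: by the definiteness hypothesis and (a), $d(F(x))\|F(x)\|_Y^2=[F(x),F(x)]_Y=h(x,F(x))\le \overline{c}\,\|x\|_X\|F(x)\|_Y$, hence $\|F(x)\|_Y\le (\overline{c}/\inf d)\,\|x\|_X$, giving $p(F)<\infty$, i.e., $F\in B(X,Y)$.

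Finally, \textbf{uniqueness of $F$}: if $F_1,F_2\in B(X,Y)$ both satisfy $h(x,y)=[F_i(x),y]_Y$ with $F_i(0)=0$, then for each $x$, $[F_1(x),y]_Y=[F_2(x),y]_Y$ for all $y$, so by the well-definedness argument above $F_1(x)=F_2(x)$; since this holds for every $x$, $F_1=F_2$. \textbf{The main obstacle} I anticipate is handling the function $c(y)$ from axiom (c): because the quasi-product is only "quasi-linear" (with the factor $c(y)$ and with additivity guaranteed only when the combination $\sum\alpha_i x_i$ stays in $S$ — here $S=Y$, so that caveat is vacuous), one must be careful that $[z,y]_Y-[z',y]_Y$ is genuinely controlled by $[z-z',y]_Y$; the factor $c(y)>0$ is harmless since we only need the vanishing $[z-z',y]_Y=0$. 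A secondary subtlety is confirming that "has a unique representation" is to be read as "there exists a unique $F\in B(X,Y)$ with $F(0)=0$ such that \ldots," and that the displayed characterization of $F$ via $F(x)=z_x$ is forced; I would state this reading explicitly at the start of the proof.
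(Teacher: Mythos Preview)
Your proposal is correct and follows essentially the same approach as the paper: define $F(x)=z_x$, use the hypothesis $[y,y]_Y=d(y)\|y\|_Y^2$ with $d$ bounded away from $0$ to show $F\in B(X,Y)$ via $[F(x),F(x)]_Y=h(x,F(x))$, and prove uniqueness by the same definiteness argument applied to $F_1(x)-F_2(x)$. Your uncertainty about $F(0)=0$ is easily resolved---property (a) does give $|h(0,y)|\le\overline{c}\cdot 0\cdot\|y\|_Y=0$, and the paper in fact simply \emph{sets} $F(0)=0$ by definition rather than deriving it.
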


\begin{proof} The "only if" part is obvious. To prove "if" part,
define \( F(x)=z \) with \( F(0)=0 \). \( F \) is well-defined since for \( x_{1}=x_{2}
\),
\begin{eqnarray*}
h(x_{1},y)=[z,y]_{Y}=h(x_{2},y)
\end{eqnarray*}
and thus \( F(x_{1})=F(x_{2})=z \). Further, for \( F \neq 0 \),
\( F \in B(X,Y) \) because there exists \( c_{1} > 0 \) such that
\begin{eqnarray*}
&&\left|\left|F\right|\right|_{B(X,Y)} \\ &=&\sup_{x \neq 0,x \in X,F(x) \neq 0}
\frac{\left|\left|F(x)\right|\right|^{2}_{Y}}
{\left|\left|x\right|\right|_{X}\left|\left|F(x)\right|\right|_{Y}} \\ &\leq& c_{1}
\sup_{x \neq 0,x \in X,F(x)\neq 0}\frac{\left|[F(x),F(x)]_{Y}\right|}
{\left|\left|x\right|\right|_{X}\left|\left|F(x)\right|\right|_{Y}} \\ &\leq& c_{1}
\sup_{x \neq 0,x \in X,y\neq 0,y \in Y}\frac{\left|[F(x),y]_{Y}\right|}
{\left|\left|x\right|\right|_{X}\left|\left|y\right|\right|_{Y}} \\ &\leq
&c_{1}\overline{c}.
\end{eqnarray*}
To prove the uniqueness of \( F \), let \( h(x,y)=[F(x),y]_{Y}=[G(x),y]_{Y} \) for
\( x \in X \) and \( y \in Y \). Then by the properties of the quasi-product,
\begin{eqnarray*}
\left|[F(x),y]_{Y}-[G(x),y]_{Y}\right| \geq c_{2} \left|[F(x)-G(x),y]_{Y}\right|
\end{eqnarray*}
implies \( [F(x)-G(x),y]_{Y}=0 \) for any \( y \in Y \) and thus
\begin{eqnarray*}
c_{3}\left|\left|F(x)-G(x)\right|\right|^{2}_{Y} \leq [F(x)-G(x),F(x)-G(x)]_{Y}=0,
\end{eqnarray*}
where \( c_{2} \) depending on \( y \) and \( c_{3} \) depending on \( x \) are
both positive numbers. Therefore, \( F(x)=G(x) \).
\end{proof}

\section{Generalized real definite operators}

The goal of this section is to formulate the spectral resolutions of some class of possibly
nonlinear operators. In this section and the following subsection, i.e., Section 3
and Section 4.1, suppose that the operator \( F: X
\rightarrow X \) of interest satisfies \( F(0)=0 \) and the quasi-products
on \( X \) exist. For
\( \tilde{F}: X \rightarrow X \) with \( \tilde{F}(0) \neq 0 \),
the corresponding spectral resolution can be obtained by the shift \(
\tilde{F}(x)=F(x)+s(x), x \in X \), where \( s: X \rightarrow X \) satisfies \( s(0)=\tilde{F}(0) \),
for example, \( s(x)=\tilde{F}(0) \) (also see Remark~\ref{rem:6} in Section 4.1).

The spectral resolution of the bounded linear symmetric operators, due to
Hilbert, is a limit of Riemann-Stieltjes sums in the sense of operator
convergence (also see
\cite{lax}, Chapter 31), i.e., having the form of Riemann-Stieltjes integral.
The spectral resolutions of the generalized real definite operators defined in this
section also have the form of Riemann-Stieltjes integral in terms of the quasi-product or
 in the sense of operator convergence as certain classes of quasi-products are employed.
For the bounded linear symmetric operators, the spectral resolution involves
both linear projection operators and linear positive operators. The nonlinear
counterparts of the linear positive operators and the linear projection
operators are defined and their basic properties are given in the first two
subsections. Then, the spectral resolutions
are given in the last subsection.

\subsection{G-positive operators} Linear spectral theory for the symmetric
operators involves the positive operators. Similarly, for the nonlinear
operators with real spectrums, the spectral theorems also involve the positive
operators. The main theorem in this subsection, Theorem~\ref{thm:7}, indicates that the
multiplication of two positive operators remains positive given some sufficient
conditions imposed on the quasi-product. Based on the theorem,  two
corollaries give the existence and the uniqueness of the square root of the
positive operator. Furthermore, the spectral theorem in
Section 3.3 can be proved by using the theorem.
Note that the proofs of Theorem~\ref{thm:7}, Corollary~\ref{cor:3}, and
Corollary~\ref{cor:4}  in
this subsection are
delegated to the supplementary materials.

The nonlinear positive operator is defined as follows.

\begin{definition}\label{def:2}

Let \( X \) be a normed space. An operator
\( F: X \rightarrow X \) is generalized real definite if and only if there exist
a quasi-product \( [\cdot,\cdot]_{X} \) and an operator
\( g: X
\rightarrow X
\)  satisfying
\( g(x) \neq 0 \) for \( x \neq 0 \) and
\begin{eqnarray*}
[F(x),g(x)]_{X} \in R
\end{eqnarray*}
for \( x \in X \). Furthermore, \( F \) is g-positive, denoted by \( F \geq 0 \), if
and only if
\begin{eqnarray*}
[F(x),g(x)]_{X} \geq 0
\end{eqnarray*}
for \( x \in X \). For the operators \( F_{1}:X \rightarrow X \) and \( F_{2}:X \rightarrow X \),
\begin{eqnarray*}
F_{1} \geq F_{2}
\end{eqnarray*}
if and only if
\begin{eqnarray*}
F_{1}-F_{2} \geq 0.
\end{eqnarray*}
As \( X \) is a normed algebra, a g-positive operator \( F \) has a square root if
and only if there exists an operator \( G: X \rightarrow X \) such that
\( [G(x)]^{2}=G(x)G(x)=F(x) \) for \( x \in X \). \( G \) is then called a square root of \( F \).

\end{definition}

 As \( F=0 \), \( F \) is g-positive. \( G=0 \) is the square root of \( F=0 \) defined
on the normed algebra \( X \).

\begin{remark}\label{rem:2}

The generalized real definiteness of the operator \( F \) relies on both the
operator \( g \) and the quasi-product. As \( F \) is a linear symmetric operator
on a Hilbert space, the operator
\( g \) is the identity map, and the quasi-product is the inner product,
\( F \) is generalized real definite and
\( F \) being positive  implies \( F \) being g-positive. Therefore,
the g-positivity extends the notion of the positivity.

As \( F: D(F) \rightarrow Y \) and \( g: D(F)
\rightarrow Y \) satisfies
\( g(x) \neq 0 \) for \( x \neq 0 \) and \( x \in D(F) \),
the above definition can be modified and the associated expressions are
\( [F(x),g(x)]_{Y} \in R \) and \( [F(x),g(x)]_{Y} \geq 0 \) for \( x \in D(F) \), where
\( D(F) \) is a subset of \( X \).

\end{remark}

Some basic properties of the positive operators are given by the following
lemma. The routine proofs are not presented.

\begin{lemma}\label{lem:1}

Let \( X \) be a normed space and
\( F_{1}, F_{2}, G_{1}, G_{2} \) be the operators from \( X \) into \( X \).
\\
(a)
\( F_{1} \geq F_{2} \) if and only if
\( [F_{1}(x),g(x)]_{X} \geq [F_{2}(x),g(x)]_{X} \) for
\( x \in X \). \\
(b) If \( \alpha \in R \),
\( \alpha \geq 0 \), and \( F_{1} \geq 0 \), then
\( \alpha F_{1} \geq 0 \). \\
(c) If \( F_{1} \geq 0 \) and \( F_{2} \geq 0 \), then
\( F_{1}+F_{2} \geq 0 \). \\
(d) If \( F_{1} \leq G_{1} \) and \( F_{2} \leq G_{2} \), then \( F_{1}+F_{2}
\leq G_{1}+G_{2} \).

\end{lemma}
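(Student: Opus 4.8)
The plan is to reduce all four statements to property (c) of the quasi-product together with the fact that the function $c$ occurring there is strictly positive. Throughout I assume, as the notation $[F_i(x),g(x)]_X$ in the statement already presupposes, that $F_1,F_2,G_1,G_2$ are generalized real definite with respect to one common quasi-product $[\cdot,\cdot]_X$ and one common operator $g$; then for any scalars $\alpha_1,\alpha_2$ the quantity $[\,(\alpha_1F_1+\alpha_2F_2)(x),g(x)\,]_X$ is real, since by property (c) of Definition~\ref{def:1} it equals $c(g(x))\bigl(\alpha_1[F_1(x),g(x)]_X+\alpha_2[F_2(x),g(x)]_X\bigr)$ and $c$ is $R$-valued.

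For part (a), unwinding Definition~\ref{def:2} gives that $F_1\geq F_2$ holds iff $F_1-F_2\geq 0$ holds iff $[(F_1-F_2)(x),g(x)]_X\geq 0$ for every $x\in X$. Since $(F_1-F_2)(x)=F_1(x)-F_2(x)$ by the vector space operations on $V(X,X)$, property (c) applied with $n=2$, $\alpha_1=1$, $\alpha_2=-1$, $x_1=F_1(x)$, $x_2=F_2(x)$, $y=g(x)$ yields
\begin{eqnarray*}
[(F_1-F_2)(x),g(x)]_X=c(g(x))\bigl([F_1(x),g(x)]_X-[F_2(x),g(x)]_X\bigr).
\end{eqnarray*}
Because $c$ is positive and bounded away from $0$, we have $c(g(x))>0$, so the left side is $\geq 0$ iff $[F_1(x),g(x)]_X\geq[F_2(x),g(x)]_X$, which is exactly the assertion of (a).

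Parts (b) and (c) are the same computation with different coefficients. For (b), property (c) with $n=1$ and coefficient $\alpha\geq 0$ gives $[(\alpha F_1)(x),g(x)]_X=c(g(x))\,\alpha\,[F_1(x),g(x)]_X$, a product of three nonnegative reals, hence $\alpha F_1\geq 0$. For (c), property (c) with $n=2$ and $\alpha_1=\alpha_2=1$ gives $[(F_1+F_2)(x),g(x)]_X=c(g(x))\bigl([F_1(x),g(x)]_X+[F_2(x),g(x)]_X\bigr)\geq 0$, hence $F_1+F_2\geq 0$. Finally, (d) is a formal consequence of (c): $F_1\leq G_1$ and $F_2\leq G_2$ mean $G_1-F_1\geq 0$ and $G_2-F_2\geq 0$, so by (c) the sum $(G_1-F_1)+(G_2-F_2)=(G_1+G_2)-(F_1+F_2)$ is g-positive, i.e., $F_1+F_2\leq G_1+G_2$.

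I do not expect a genuine obstacle here; the only point needing care is that the order relation is defined through differences of \emph{operators}, so one must route the argument through the additivity and homogeneity in the first slot of the quasi-product rather than reasoning slotwise on the operators, while keeping track of the harmless strictly positive factor $c(g(x))$ that property (c) introduces.
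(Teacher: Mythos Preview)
The paper does not give a proof for this lemma, stating only that ``the routine proofs are not presented.'' Your argument is correct and is exactly the routine verification the paper omits: each part reduces to property (c) of Definition~\ref{def:1} together with the strict positivity of the function $c$, and your handling of the implicit hypothesis (a common $g$ and quasi-product for all operators involved) is appropriate.
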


For a linear symmetric operator \( F \) on a Hilbert space \( X \), the normed
value of \( F(x) \) and the associated inner product satisfy the equation
\( <(F\circ F)(x),x>_{X}=||F(x)||_{X}^{2} \). It turns out that the analogous
equation (inequality) defined below plays the key role in the development of
the spectral resolutions of the possibly nonlinear operators.

\begin{definition}\label{def:3}

Let \( X \) be a normed space and \( g: X \rightarrow X \)
satisfying \( g(x) \neq 0 \) for \( x \neq 0 \). For \( x, y \in X \),
a quasi-product has a left integral domain
if and only if for \( g(x) \neq 0 \), \( [y,g(x)]_{X}=0 \) implies \( y=0 \). Further,
as \( X \) is a normed algebra with a unit \( 1 \), for
\( x, y_{1}, y_{2} \in X \), a
quasi-product preserves the positivity if and only if
\( [y_{1}y_{2},g(x)]_{X} \geq 0 \) as \( [y_{1},g(x)]_{X} \geq 0 \) and
\( [y_{2},g(x)]_{X} \geq 0 \). In addition,
a quasi-product is square bounded below if and only if  for
\( x, y \in X \), there exists a positive
number
\(
\underline{k}
\) such that
\( [y^{2},g(x)]_{X} \geq \underline{k} ||y||_{X}^{2}||g(x)||_{X} \)
as \( y=1 \) or \( [y,g(x)]_{X} \in R \).

\end{definition}

The properties of the quasi-product in Definition~\ref{def:3} rely on the operator
\( g
\) and \( g \) is assumed to be the same as the one corresponding to
the generalized real definite operators of interest (see Definition~\ref{def:2}) hereafter.
Note that the quasi-product has these properties on the set
\( X \times R(g) \) by the above definition and thus another
way is to define these properties on the subsets of \( X \times X \). However,
only the set \( X \times R(g) \) is of interest in this article and hence Definition~\ref{def:3} serves the purpose.

In the following, the operators involved in the spectral theorems of the
generalized real definite operators, the positive and negative parts of the
operator, are defined.

\begin{definition}\label{def:4}

Let \( X \) be a normed space and \( F \) be a generalized real definite operator.
The operator \( |F| \) is defined by
\begin{eqnarray*}
\left|F\right|(x)=F(x)
\end{eqnarray*}
if \( [F(x),g(x)]_{X} \geq 0 \) and
\begin{eqnarray*}
\left|F\right|(x)=-F(x)
\end{eqnarray*}
if \( [F(x),g(x)]_{X} < 0 \) for \( x \in X \). The positive part of \( F \) is
\begin{eqnarray*}
F^{+}=\frac{\left|F\right|+F}{2}
\end{eqnarray*}
and the negative part of \( F \) is
\begin{eqnarray*}
F^{-}=\frac{\left|F\right|-F}{2}.
\end{eqnarray*}

\end{definition}

A direct check gives the following lemma.

\begin{lemma}\label{lem:2}
 Let \( F \) be generalized real definite.   \\
(a) Let \( X \) be a normed space. Then \( |F| \geq 0 \). \\ (b) Let \( X \) be a
unital normed algebra. Then \( |F|^{2}=F^{2}
\), where for \( x
\in X
\),
\( |F|^{2}
\) is defined by \( |F|^{2}(x)=|F|(x)|F|(x) \) and
\( F^{2} \) is defined by \( F^{2}(x)=F(x)F(x) \). Furthermore, if
the quasi-product is square bounded below or preserves the positivity, then \(
|F|
\) is a square root of
\( F^{2} \).

\end{lemma}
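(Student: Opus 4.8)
The plan is to handle the three assertions in turn, each by tracing through Definition~\ref{def:4} at a fixed point $x \in X$ and splitting on the sign of $[F(x),g(x)]_X$. For part (a), at each $x$ the definition gives $|F|(x) = F(x)$ when $[F(x),g(x)]_X \geq 0$ and $|F|(x) = -F(x)$ when $[F(x),g(x)]_X < 0$; in the first case $[|F|(x),g(x)]_X = [F(x),g(x)]_X \geq 0$ directly, and in the second case I need $[-F(x),g(x)]_X \geq 0$. This last step uses property (c) of the quasi-product (Definition~\ref{def:1}) with $n=1$ and $\alpha_1 = -1$: $[-F(x),g(x)]_X = -c(g(x))[F(x),g(x)]_X$, and since $c$ is positive and $[F(x),g(x)]_X < 0$, the product is $\geq 0$. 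Hence $[|F|(x),g(x)]_X \geq 0$ for every $x$, i.e.\ $|F| \geq 0$.

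For part (b), the identity $|F|^2 = F^2$ is again pointwise: at each $x$, $|F|(x) = \pm F(x)$, so $|F|(x)|F|(x) = (\pm F(x))(\pm F(x)) = F(x)F(x) = F^2(x)$ regardless of the sign, using only that the product in the normed algebra $X$ satisfies $(-a)(-a) = a^2$. This needs no hypothesis on the quasi-product. For the final claim that $|F|$ is a square root of $F^2$ under either extra hypothesis, I must verify that $|F|$ itself is g-positive — which is exactly part (a), already established — and then invoke the definition of square root from Definition~\ref{def:2}: since $[|F|(x)]^2 = |F|(x)|F|(x) = F^2(x)$ by the identity just shown, $|F|$ literally satisfies the defining equation of a square root of $F^2$. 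I should be slightly careful about what ``square root of a g-positive operator'' presupposes: Definition~\ref{def:2} only introduces the notion of square root for g-positive operators, so to say ``$|F|$ is a square root of $F^2$'' coherently I want $F^2$ to be g-positive, and this is where the two alternative hypotheses enter. If the quasi-product preserves positivity, then since $[|F|(x),g(x)]_X \geq 0$, taking $y_1 = y_2 = |F|(x)$ gives $[|F|(x)|F|(x),g(x)]_X = [F^2(x),g(x)]_X \geq 0$; if instead the quasi-product is square bounded below, then $[F^2(x),g(x)]_X = [(|F|(x))^2, g(x)]_X \geq \underline{k}\,\||F|(x)\|_X^2 \|g(x)\|_X \geq 0$. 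Either way $F^2 \geq 0$, so the phrase is justified and $|F|$ is a bona fide square root.

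The main obstacle is bookkeeping rather than depth: one must be attentive that property (c) of the quasi-product carries the factor $c(g(x))$, so scalar multiplication by $-1$ inside the first slot does not simply pass through as a sign but as $-c(g(x))$ times the original value — harmless here because only the sign matters, but it is the one place a careless ``linearity'' argument would be technically wrong. A secondary point to state explicitly is that for the square-bounded-below alternative one needs $[|F|(x),g(x)]_X \in R$ (so that the hypothesis of Definition~\ref{def:3} applies with $y = |F|(x)$), and this holds because $F$ is generalized real definite, so $[F(x),g(x)]_X \in R$, whence $[|F|(x),g(x)]_X = \pm[F(x),g(x)]_X \in R$ as well. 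Beyond that the argument is a routine direct check, consistent with the paper's remark that it is obtained by ``a direct check.''
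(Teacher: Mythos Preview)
Your proof is correct and is precisely the ``direct check'' the paper alludes to without spelling out; the case split on the sign of $[F(x),g(x)]_X$, the use of property~(c) of the quasi-product (with the $c(g(x))$ factor) for part~(a), and the verification that $F^2 \geq 0$ under either hypothesis to make the square-root terminology coherent are all exactly right. One tiny cosmetic point: in your final paragraph you write $[|F|(x),g(x)]_X = \pm[F(x),g(x)]_X$, but by your own earlier observation the negative case carries the factor $c(g(x))$; this does not affect the conclusion (real-valuedness) and the argument stands.
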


The above lemma implies that the quasi-product being square bounded below
or preserving the positivity is a sensible condition. Since otherwise, the square
of a generalized real definite operator or even a g-positive operator might not
be g-positive.

It is natural to ask when the g-positive operator has a square root. It turns out
that the quasi-product having a left integral domain or preserving the positivity
is a key sufficient condition. The following theorem indicates that the pointwise
multiplication of two g-positive operators is g-positive given some sufficient
condition.

\begin{theorem}\label{thm:7}

Let \( X \) be a unital Banach algebra,
\( F \geq 0 \) and \( H \geq 0 \). If there exists a
quasi-product (not necessarily square bounded below) preserving the
positivity with
\( X \) being not necessarily commutative or a square bounded below quasi-product
having a left integral domain with \( X \) being commutative, then \( FH \geq 0
\), where \( (FH)(x)=F(x)H(x) \) for \( x \in X \).

\end{theorem}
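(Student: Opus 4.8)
The plan is to reduce the two cases to showing that the scalar quantity $[F(x)H(x),g(x)]_X$ is nonnegative for each fixed $x \in X$, exploiting the hypotheses $[F(x),g(x)]_X \ge 0$ and $[H(x),g(x)]_X \ge 0$. In the first case, where a positivity-preserving quasi-product exists, this is almost immediate: by Definition~\ref{def:3}, since $[F(x),g(x)]_X \ge 0$ and $[H(x),g(x)]_X \ge 0$, we get $[F(x)H(x),g(x)]_X \ge 0$ directly, so $FH \ge 0$ with respect to the same $g$. The only thing to check is that $(FH)(x) = F(x)H(x)$ is a legitimate operator (it is, pointwise in the unital algebra $X$) and that generalized real definiteness of $FH$ holds — but that is subsumed by the inequality being a real number $\ge 0$.

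The substantive case is the commutative one, where we only have a square bounded below quasi-product with a left integral domain. Here the idea is to pass through square roots. First I would invoke Corollary~\ref{cor:3} (the existence of a square root of a g-positive operator under the left-integral-domain / square-boundedness hypotheses) to obtain operators $F_1, H_1$ with $F_1^2 = F$ and $H_1^2 = H$. Using commutativity of $X$, for each $x$ we have $F(x)H(x) = F_1(x)^2 H_1(x)^2 = \big(F_1(x)H_1(x)\big)^2$, so $FH = (F_1 * \text{-type product})^2$ is pointwise a square. Then the task becomes: show that an operator which is pointwise a square, say $G^2$ where $(G^2)(x) = G(x)^2$, is g-positive. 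This should follow from Lemma~\ref{lem:2}(b) combined with square boundedness below: one writes $G(x)^2$ in terms of the quasi-product and uses $[G(x)^2, g(x)]_X \ge \underline{k}\,\|G(x)\|_X^2 \|g(x)\|_X \ge 0$. Care is needed because the square-bounded-below condition as stated applies when $y = 1$ or $[y,g(x)]_X \in \mathbb R$; so I would first need to argue that $[G(x), g(x)]_X$ is real (or reduce to that situation via the $|G|$ construction of Definition~\ref{def:4}, since $|G|^2 = G^2$ by Lemma~\ref{lem:2}(b) and $|G|$ is g-positive by Lemma~\ref{lem:2}(a), hence $[|G|(x), g(x)]_X$ is real and nonnegative).

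The main obstacle, I expect, is the bookkeeping around \emph{which} quantities are guaranteed real and the interplay between the square root existence (Corollary~\ref{cor:3}) and the pointwise-square argument — in particular making sure the square root $F_1$ one extracts is itself generalized real definite with respect to the \emph{same} $g$, so that $F_1(x)H_1(x)$ can be fed back into the square-bounded-below inequality. A secondary subtlety is commutativity: one needs $F_1(x)H_1(x) = H_1(x)F_1(x)$ to rearrange $F_1(x)^2 H_1(x)^2 = (F_1(x)H_1(x))^2$, which is where the commutativity of $X$ is essential and is exactly why that hypothesis is absent in the first (positivity-preserving) case. Once these real/commutativity points are pinned down, the estimate itself is a one-line application of Definition~\ref{def:3}. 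I would also double-check that $FH \in B(X)$ is not actually required by the statement — it only asserts $FH \ge 0$, which is purely the sign condition — so no boundedness estimate (of the type in Corollary~\ref{cor:1}) is needed here.
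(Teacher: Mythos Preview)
Your treatment of the first case (positivity-preserving quasi-product) is correct and is indeed immediate from Definition~\ref{def:3}.

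In the second case there is a circularity problem. You propose to invoke Corollary~\ref{cor:3}(b) to extract square roots $F_1, H_1$ of $F, H$. But the paper explicitly presents Corollary~\ref{cor:3} and Corollary~\ref{cor:4} as \emph{consequences} of Theorem~\ref{thm:7}: ``Based on the theorem, two corollaries give the existence and the uniqueness of the square root of the positive operator.'' So Corollary~\ref{cor:3} is logically downstream of Theorem~\ref{thm:7} and cannot be quoted in its proof. Whatever square-root construction the argument needs must be carried out by hand inside the proof (for instance, the classical iteration $G_{0}=0$, $G_{n+1}=G_{n}+\tfrac{1}{2}(F-G_{n}^{2})$ for a scaled $0 \le F \le 1_{X}$, with the positivity of the iterates verified directly), rather than imported as a finished corollary.

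Even setting the circularity aside, the step you yourself flag as ``the main obstacle'' is not mere bookkeeping. To feed $y = F_1(x)H_1(x)$ into the square-bounded-below inequality of Definition~\ref{def:3} you need $[y,g(x)]_{X} \in R$. Your suggested fallback via $|G|$ (Definition~\ref{def:4}, Lemma~\ref{lem:2}) presupposes that $G = F_1 H_1$ is already generalized real definite, which is exactly the reality assertion in question. So as sketched the argument has a genuine gap here, not just unfinished details; a direct argument that does not pass through the product $F_1 H_1$ is required.
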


For a positive linear symmetric operator, the positive square root of the
operator is uniquely determined. In the following, the sufficient conditions for
the existence and the uniqueness of the positive square root of the g-positive
operator are given.

\begin{corollary}\label{cor:3}

Let \( X \) be a unital Banach algebra. \\ (a) If \( G \), the square root of the
g-positive operator \( F \), exists and
\( G
\geq 0
\), the commutative unital Banach algebra \( X \) is an integral domain,
and the quasi-product has a left integral domain, then \( G \) is unique and is
denoted by \( G=F^{1/2} \). \\ (b) If there exists a square bounded below
quasi-product preserving the positivity on
\( X \) (not necessarily commutative) or a square bounded below quasi-product
having a left integral domain with \( X \) being commutative, \( 0 \leq F \leq
1_{X} \), then \( F
\) has a square root \( G \geq 0 \) and
\( G \) commutes with any operator \( W \in V(X,X)  \) which commutes with
\( F \), where \( 1_{X}(x)=1 \) for \( x \in X \).

\end{corollary}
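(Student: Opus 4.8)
My plan is to treat the two parts separately, working pointwise in the unital Banach algebra $X$ and using that every operator produced below takes, at each $x$, a value that is a polynomial in $F(x)$ with real coefficients and no constant term.

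\emph{Part (a).} Let $G_{1},G_{2}\geq 0$ be square roots of $F$ and fix $x\in X$. If $x=0$, then $G_{1}(0)^{2}=G_{2}(0)^{2}=F(0)=0$, so $G_{1}(0)=G_{2}(0)=0$ because $X$ is an integral domain. If $x\neq 0$, commutativity of $X$ gives $(G_{1}(x)-G_{2}(x))(G_{1}(x)+G_{2}(x))=G_{1}(x)^{2}-G_{2}(x)^{2}=0$, hence, $X$ being an integral domain, either $G_{1}(x)=G_{2}(x)$ or $G_{1}(x)=-G_{2}(x)$. In the latter case property (c) of Definition~\ref{def:1} yields $[G_{2}(x),g(x)]_{X}=[-G_{1}(x),g(x)]_{X}=-c(g(x))\,[G_{1}(x),g(x)]_{X}$; since $c>0$ and both $[G_{1}(x),g(x)]_{X}$ and $[G_{2}(x),g(x)]_{X}$ are nonnegative, both vanish, and because the quasi-product has a left integral domain and $g(x)\neq 0$ it follows that $G_{1}(x)=G_{2}(x)=0$. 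Thus $G_{1}=G_{2}$, so a g-positive square root, when it exists, is unique and the notation $F^{1/2}$ is unambiguous.

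\emph{Part (b), construction and monotonicity.} For the existence in (b) I would mimic the classical monotone iteration for the operator square root: set $G_{0}=0$ and $G_{n+1}(x)=G_{n}(x)+\frac{1}{2}(F(x)-G_{n}(x)^{2})$ with $G_{n}(0)=0$, all operations in $X$. An easy induction shows $G_{n}(x)$ is a polynomial in $F(x)$ as above, so the $G_{n}(x)$ commute with one another and with everything commuting with $F(x)$. From the identities $1_{X}-G_{n+1}=\frac{1}{2}(1_{X}-G_{n})^{2}+\frac{1}{2}(1_{X}-F)$ and, for $n\geq 1$, $F-G_{n}^{2}=(G_{n}-G_{n-1})(2\cdot 1_{X}-G_{n}-G_{n-1})$, together with Lemma~\ref{lem:1}, the hypothesis $0\leq F\leq 1_{X}$, and Theorem~\ref{thm:7} (applied to $F=H$ to make squares of g-positive operators g-positive, and to commuting g-positive factors), a simultaneous induction gives $0\leq G_{n}\leq G_{n+1}\leq 1_{X}$ for all $n$, and in particular $F-G_{n}^{2}\geq 0$.

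\emph{Part (b), convergence and conclusion, the main obstacle.} The difficulty is that g-positivity of $G_{m}-G_{n}$ does not control $\|G_{m}(x)-G_{n}(x)\|_{X}$, and here square-boundedness-below of the quasi-product is decisive. For $m>n$ one has $0\leq G_{m}-G_{n}\leq 1_{X}$, so Theorem~\ref{thm:7} applied to the commuting g-positive operators $G_{m}-G_{n}$ and $1_{X}-(G_{m}-G_{n})$ gives $(G_{m}-G_{n})-(G_{m}-G_{n})^{2}\geq 0$, whence $[(G_{m}(x)-G_{n}(x))^{2},g(x)]_{X}\leq [G_{m}(x)-G_{n}(x),g(x)]_{X}=c(g(x))([G_{m}(x),g(x)]_{X}-[G_{n}(x),g(x)]_{X})$. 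The scalars $[G_{n}(x),g(x)]_{X}$ are nondecreasing in $n$ and bounded above by $[1_{X}(x),g(x)]_{X}$, hence Cauchy, and $c$ is bounded, so the right-hand side tends to $0$; square-boundedness-below then forces $\underline{k}\,\|G_{m}(x)-G_{n}(x)\|_{X}^{2}\,\|g(x)\|_{X}\to 0$, so $\{G_{n}(x)\}$ is Cauchy in the Banach space $X$ and converges to some $G(x)$; put $G(0)=0$. Continuity of multiplication in $X$ lets one pass to the limit in the recursion to obtain $G(x)^{2}=F(x)$, and continuity of the quasi-product in its first argument gives $[G(x),g(x)]_{X}=\lim_{n}[G_{n}(x),g(x)]_{X}\geq 0$, so $G\geq 0$ is a square root of $F$. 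Finally, if $W\in V(X,X)$ commutes with $F$ with respect to the multiplication of Theorem~\ref{thm:5}, i.e. $W(x)F(x)=F(x)W(x)$ for all $x$, then $W(x)$ commutes with each polynomial $G_{n}(x)$ in $F(x)$, hence with $G(x)$ by continuity of multiplication, so $G$ commutes with $W$.
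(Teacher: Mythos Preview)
The paper delegates the proof of Corollary~\ref{cor:3} to its supplementary materials, so a line-by-line comparison is not possible from the text provided. That said, your approach is the classical one --- part (a) via the integral-domain factorisation $(G_1-G_2)(G_1+G_2)=0$ together with the left-integral-domain property, and part (b) via the monotone iteration $G_{n+1}=G_n+\tfrac12(F-G_n^{2})$ with convergence forced by square-boundedness-below --- and this is precisely the argument that Theorem~\ref{thm:7} and Lemma~\ref{lem:1} are set up to support. It is almost certainly the paper's intended route, and your proof is essentially correct.

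Two small technical points are worth making explicit. First, several auxiliary operators you use in part (b), notably $1_{X}-G_{n}$ and $1_{X}-(G_{m}-G_{n})$, do not vanish at $0$, whereas the standing convention in Section~3 is $F(0)=0$; Remark~\ref{rem:3} extends Theorem~\ref{thm:7} to such operators only under $g(0)\neq 0$. This does not actually matter in your argument, since the recursion gives $G_{n}(0)=0$ for all $n$ directly and the g-positivity and square-bounded-below inequalities are only needed at $x\neq 0$ (where $g(x)\neq 0$), but it would be cleaner to say so rather than invoke Theorem~\ref{thm:7} globally. Second, to start the induction you need $1_{X}\geq 0$; this is not automatic from Definition~\ref{def:2} but does follow from the square-bounded-below hypothesis applied with $y=1$, and is worth stating.
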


\begin{corollary}\label{cor:4}

Let \( X \) be a unital Banach algebra and \( B(X) \) be the Banach algebra with
the multiplication operation given in Theorem~\ref{thm:5}. \\ (a) If \( G \in B(X) \)
satisfying \( G^{\ast 2}=G
\ast G=F \) exists and \( F, G \geq 0 \), the commutative unital Banach algebra \( X \)
is an integral domain, and the quasi-product has a left integral domain, then \(
G
\) is unique. \\
(b) Suppose that there exists a square bounded below quasi-product
preserving the positivity on \( X \) (not necessarily commutative) or a square
bounded below quasi-product having a left integral domain with \( X \) being
commutative. For a g-positive operator \( F \in B(X) \), there exists a g-positive
operator \( G \in B(X) \) such that \( G^{\ast 2}=G \ast G=F \) and
\( G \) commutes with any operator \( W \in V(X,X) \) which commutes with
\( F \).

\end{corollary}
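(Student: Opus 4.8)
The plan is to reduce both parts to Corollary~\ref{cor:3} by the change of variables that exchanges the $\ast$-multiplication of Theorem~\ref{thm:5} on $B(X)$ for the pointwise multiplication on $X$. For $G\in B(X)$ put $H(x)=G(x)/\|x\|_{X}$ for $x\neq 0$ and $H(0)=0$, and likewise $\check F(x)=F(x)/\|x\|_{X}$ for $x\neq 0$, $\check F(0)=0$. From the formula for $\ast$ one gets $(G\ast G)(x)=\|x\|_{X}\,H(x)H(x)$ for $x\neq 0$, so $G\ast G=F$ is equivalent to $H(x)H(x)=\check F(x)$ for every $x$ (the equality at $0$ being automatic since $G(0)=0=F(0)$). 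Property (c) of the quasi-product and positivity of $c$ give $[H(x),g(x)]_{X}=c(g(x))\|x\|_{X}^{-1}[G(x),g(x)]_{X}$ and the analogous identity for $\check F$, whence $G\geq 0\Leftrightarrow H\geq 0$, $F\geq 0\Leftrightarrow\check F\geq 0$, and similarly for the comparison with $1_{X}$.

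For part (a) I would take two g-positive $G_{1},G_{2}\in B(X)$ with $G_{i}\ast G_{i}=F$; by the first paragraph the associated operators $H_{1},H_{2}$ are g-positive square roots of the g-positive operator $\check F$. Since $X$ is a commutative unital Banach algebra that is an integral domain and the quasi-product has a left integral domain, Corollary~\ref{cor:3}(a) applied to $\check F$ gives $H_{1}=H_{2}$, and multiplying through by $\|x\|_{X}$ yields $G_{1}=G_{2}$.

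For part (b) the steps would be as follows. If $F=0$, take $G=0$. Otherwise, first rescale: property (b) of the quasi-product together with $F\in B(X)$ and Corollary~\ref{cor:1} bound $[\check F(x),g(x)]_{X}$ above by a multiple of $\|g(x)\|_{X}$ independent of $x$, while the square bounded below property at $y=1$ gives $[1,g(x)]_{X}\geq\underline{k}\|1\|_{X}^{2}\|g(x)\|_{X}$, so $0\leq\alpha\check F\leq 1_{X}$ for all sufficiently small $\alpha>0$. Corollary~\ref{cor:3}(b), whose hypotheses on the quasi-product and on $X$ are exactly those assumed here, then produces a g-positive $H$ with $H(x)H(x)=\alpha\check F(x)$ that commutes with every operator commuting with $\alpha\check F$. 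Set $G(x)=\|x\|_{X}H(x)$; then $G\ast G=\alpha F$ by the first paragraph, $G\geq 0$, and $G\in B(X)$ because the square bounded below property (applicable since $[H(x),g(x)]_{X}\geq 0$) together with property (b) and $F\in B(X)$ give $\underline{k}\|H(x)\|_{X}^{2}\|g(x)\|_{X}\leq[H(x)H(x),g(x)]_{X}=[\alpha\check F(x),g(x)]_{X}\leq\overline{c}\,\alpha\|F\|_{B(X)}\|g(x)\|_{X}$, so that $\|G(x)\|_{X}/\|x\|_{X}=\|H(x)\|_{X}$ is uniformly bounded. For the commutation clause I would observe that, for $W\in V(X,X)$, the relation $W\ast F=F\ast W$ is equivalent to $W(x)F(x)=F(x)W(x)$ for all $x$, hence to $W$ commuting pointwise with $\alpha\check F$, so Corollary~\ref{cor:3}(b) gives $W(x)H(x)=H(x)W(x)$ and therefore $W\ast G=G\ast W$. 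Finally $\tilde G:=\alpha^{-1/2}G$ satisfies $\tilde G\ast\tilde G=F$, lies in $B(X)$, is g-positive by Lemma~\ref{lem:1}(b), and still commutes with every $W$ that commutes with $F$.

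The one genuinely delicate point will be showing that $G=\|\cdot\|_{X}H(\cdot)$ lands in $B(X)$: it is precisely the square bounded below property — which upgrades the a priori only quasi-product-small quantity $[\alpha\check F(x),g(x)]_{X}$ to an honest norm bound on $H(x)$ — combined with $F\in B(X)$, that forces $G\in B(X)$; without square boundedness below a g-positive $\ast$-square root of $F$ need not exist inside $B(X)$ at all. The remaining ingredients (the identity $(G\ast G)(x)=\|x\|_{X}H(x)H(x)$, the equivalence of $\ast$-commuting with pointwise commuting, and the existence of a workable $\alpha$) are routine verifications.
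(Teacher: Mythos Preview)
Your reduction of the $\ast$-square root problem on $B(X)$ to the pointwise square root problem on $X$ via the rescaling $H(x)=G(x)/\|x\|_{X}$, $\check F(x)=F(x)/\|x\|_{X}$ is exactly the mechanism the paper employs: Remark~\ref{rem:3}, which immediately follows Corollary~\ref{cor:4}, records the formula $G(x)=(\|F\|_{B(X)}/k)^{1/2}\|x\|_{X}\hat G(x)$ with $\hat G^{2}=\hat F$ and $\hat F(x)=kF(x)/(\|F\|_{B(X)}\|x\|_{X})$, which is precisely your $\tilde G=\alpha^{-1/2}\|\cdot\|_{X}H(\cdot)$ with $\alpha=k/\|F\|_{B(X)}$. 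Your handling of the three nontrivial points---the transfer of g-positivity through property~(c), the use of square-boundedness below at $y=1$ to secure $0\le\alpha\check F\le 1_{X}$ and at $y=H(x)$ to force $G\in B(X)$, and the equivalence of $\ast$-commuting with pointwise commuting---is correct and matches the paper's intended argument.
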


\begin{remark}\label{rem:3}

Let \( \tilde{F} \geq 0 \) and \( \tilde{H} \geq 0 \) with \( \tilde{F}(0) \neq 0 \) or
\( \tilde{H}(0)
\neq 0
\), and \( g(0) \neq 0 \). Then given the sufficient conditions in Theorem~\ref{thm:7} and Corollary~\ref{cor:3},
the results also hold for \( \tilde{F} \) and \( \tilde{H} \). Furthermore, given the
sufficient conditions in Corollary~\ref{cor:4} and \( \tilde{F} \in B(X) \), the g-positive
operator
\( G \in B(X) \) defined by
\( G(x)=(||\tilde{F}||_{B(X)}/k)^{1/2}||x||_{X}\hat{G}(x) \) for \( x \neq 0 \)
and
\( G(0)=k^{-1/2}\hat{G}(0) \) satisfies  \( G \ast G=\tilde{F} \), where \( \hat{G}^{2}=\hat{F} \)
and
\( \hat{F}(x)=k\tilde{F}(x)/(||\tilde{F}||_{B(X)}||x||_{X}) \) for \( x \neq 0 \),
\( \hat{F}(0)=k\tilde{F}(0) \), and where \( k \) is some number.

\end{remark}

\subsection{Projection operators}

The spectral resolution of a bounded linear symmetric operator involves both the
projection operator and the spectrum of the operator. The counterparts of
these quantities for a generalized real definite operator are defined and some
basic facts about these quantities are given in this subsection. The first two
lemmas, Lemma~\ref{lem:3} and Lemma~\ref{lem:4}, can be used to prove Lemma~\ref{lem:7}, the main
result of this subsection. Lemma~\ref{lem:7} can be applied to prove the spectral
theorem in next subsection. Note that the proofs of Lemma~\ref{lem:3}, Lemma~\ref{lem:4}, Lemma~\ref{lem:5}, and
Lemma~\ref{lem:6}  in
this subsection are
delegated to the supplementary materials.

The nonlinear projection operator involved in the spectral resolution of the
generalized real definite operator is defined first.

\begin{definition}\label{def:5}

For a subset \( S \) containing \( 0 \) of a normed space \( X \), the
corresponding projection operator \( E_{S}: X \rightarrow X \) is defined by \(
E_{S}(x)=x \) if
\( x \in S \) and
\( E_{S}(x)=0 \) otherwise. As \( X \) is a unital normed algebra,
the projection indicator \( 1_{S}: X \rightarrow X \) is defined by \( 1_{S}(x)=1
\) if \( x \in S \) and
\( 1_{S}(x)=0 \) otherwise.

\end{definition}

 Denote \( N(F) \) as the null space (set) of an operator \( F \), i.e.,
 \( N(F)=\{x: F(x)=0, x \in X\} \).
 The following two
lemmas give the basic properties of the projection operator (indicator) and the
positive and negative parts of \( F \). Let
\( S_{2}
\backslash S_{1} \) denote the intersection of the set
\( S_{2}
\) and the complement of the set \( S_{1} \).

\begin{lemma}\label{lem:3}

Let \( S_{1} \) and \( S_{2} \) both containing \( 0 \) be the subsets of the
normed space
\( X \) . \\
(a) \( ||E_{S_{1}}||_{B(X)} \leq 1 \). \\ (b) The following are equivalent. \\ (i)
\begin{eqnarray*}
E_{S_{1}}\circ E_{S_{2}}=E_{S_{2}}\circ E_{S_{1}}=E_{S_{1}}.
\end{eqnarray*}
(ii) \( S_{1} \subset S_{2} \). \\ (iii) \( N(E_{S_{2}}) \subset N(E_{S_{1}}) \). \\ (iv)
\( ||E_{S_{1}}(x)||_{X} \leq ||E_{S_{2}}(x)||_{X} \) for \( x \in X \). \\ As \( X \)
is a unital normed algebra, the following are equivalent. \\ (i)*
\begin{eqnarray*}
1_{S_{1}}1_{S_{2}}=1_{S_{2}}1_{S_{1}}=1_{S_{1}}.
\end{eqnarray*}
(ii)* \( S_{1} \subset S_{2} \). \\ (iii)* \( N(1_{S_{2}}) \subset N(1_{S_{1}})
\). \\ (iv)* \( ||1_{S_{1}}(x)||_{X} \leq ||1_{S_{2}}(x)||_{X} \) for \( x \in X \). \\
(c) Let \( S_{1} \subset S_{2} \). Then
\( E_{S_{2}-S_{1}}=E_{S_{2}}-E_{S_{1}} \) is
idempotent, i.e.,
\( E_{S_{2}-S_{1}} \circ E_{S_{2}-S_{1}}=E_{S_{2}-S_{1}} \),
and the range of \( E_{S_{2}-S_{1}} \) is
\( S_{2}-S_{1}=(S_{2} \backslash S_{1}) \cup \{0\} \).
As \( X \) is a unital normed algebra, \( 1_{S_{2}- S_{1}}=1_{S_{2}}-1_{S_{1}}
\) satisfies \( 1_{S_{2}- S_{1}}1_{S_{2}-S_{1}}=1_{S_{2}-S_{1}} \).

\end{lemma}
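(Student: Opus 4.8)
The plan is to reduce everything to the elementary description of the operators from Definition~\ref{def:5}: for a subset $T$ of $X$ with $0 \in T$, the operator $E_T$ sends $x$ to $x$ on $T$ and to $0$ off $T$, so that $\|E_T(x)\|_X \in \{0,\|x\|_X\}$ for every $x$, $E_T(0)=0$, $E_T\circ E_T=E_T$, and $N(E_T)=(X\setminus T)\cup\{0\}$; analogously $1_T$ takes the value $1$ (the unit) on $T$ and $0$ off $T$, so the pointwise product $(1_{T_1}1_{T_2})(x)=1_{T_1}(x)1_{T_2}(x)$ equals $1$ exactly on $T_1\cap T_2$, $\|1_T(x)\|_X\in\{0,\|1\|_X\}$ with $\|1\|_X>0$ since $X$ is nontrivial, and $N(1_T)=X\setminus T$ (note the absence of $\{0\}$ here, since $1_T(0)=1$).

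For part (a), since $\|E_{S_1}(x)\|_X/\|x\|_X$ is $0$ or $1$ for $x\neq 0$ and $E_{S_1}(0)=0$ (as $0\in S_1$), the function $p$ of Theorem~\ref{thm:2} satisfies $p(E_{S_1})\leq 1$, i.e. $\|E_{S_1}\|_{B(X)}\leq 1$. For part (b), I would first check that each of (ii), (iii), (iv) is equivalent to $S_1\subset S_2$: statement (iii) reads $(X\setminus S_2)\cup\{0\}\subset(X\setminus S_1)\cup\{0\}$, and since $0$ lies in neither $X\setminus S_2$ nor $X\setminus S_1$ this is exactly $X\setminus S_2\subset X\setminus S_1$; statement (iv) evaluated at a point $x\in S_1$, $x\neq 0$, forces $\|E_{S_2}(x)\|_X>0$, hence $x\in S_2$, and the reverse implication is immediate case by case. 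It then remains to link (i) with (ii): if $S_1\subset S_2$, evaluating $E_{S_1}\circ E_{S_2}$ and $E_{S_2}\circ E_{S_1}$ separately on $S_1$, on $S_2\setminus S_1$, and on $X\setminus S_2$ yields $E_{S_1}$ in every case; conversely, if $(E_{S_1}\circ E_{S_2})(x)=E_{S_1}(x)$ for all $x$, then for $x\in S_1$ with $x\neq 0$ the left side is $E_{S_1}(E_{S_2}(x))$, which is $0$ unless $x\in S_2$, so $x\in S_2$. The starred chain is handled identically, now using $1_{S_1}1_{S_2}=1_{S_1\cap S_2}$ (so (i)$^\ast$ says $S_1\cap S_2=S_1$), $N(1_T)=X\setminus T$ for (iii)$^\ast$, and $\|1_{S_1}(x)\|_X\leq\|1_{S_2}(x)\|_X$ together with $\|1\|_X>0$ for (iv)$^\ast$.

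For part (c), assume $S_1\subset S_2$ and evaluate $(E_{S_2}-E_{S_1})(x)$ on the three regions: it is $x-x=0$ on $S_1$, $x-0=x$ on $S_2\setminus S_1$, and $0-0=0$ on $X\setminus S_2$; hence $E_{S_2}-E_{S_1}$ coincides with the projection operator of the set $(S_2\setminus S_1)\cup\{0\}=S_2-S_1$, whose range is $S_2-S_1$ and which is idempotent because $E_T\circ E_T=E_T$ for any $T\ni 0$. The algebra statement uses the same computation: $(1_{S_2}-1_{S_1})(x)$ equals $1-1=0$ on $S_1$, $1-0=1$ on $S_2\setminus S_1$, and $0$ on $X\setminus S_2$, so this operator only takes the values $0$ and $1$ pointwise, and since $1\cdot 1=1$ and $0\cdot 0=0$ the pointwise product $(1_{S_2}-1_{S_1})(1_{S_2}-1_{S_1})$ equals $1_{S_2}-1_{S_1}$.

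The whole argument is routine; the only points requiring care — and hence the closest thing to an obstacle — are the bookkeeping at the origin (since $0$ lies in every $S_i$, one has $E_{S_i}(0)=0$ but $1_{S_i}(0)=1$, which is exactly why $N(E_T)$ carries the extra $\{0\}$ while $N(1_T)$ does not, and why $1_{S_2}-1_{S_1}$ should be read as the displayed difference of operators rather than as the indicator of the literal set $S_2-S_1$), and the explicit appeal to nontriviality of $X$, so that $1\neq 0$ and $\|1\|_X>0$, which is what makes the strict inequalities in the starred equivalences bite.
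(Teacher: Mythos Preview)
Your argument is correct and complete. The paper itself does not prove Lemma~\ref{lem:3} in the main text (it delegates the proof to the supplementary materials, as stated at the start of the subsection), so there is no in-paper proof to compare against; but for a statement of this kind there is essentially only one approach --- the direct pointwise case analysis you carry out --- and your treatment of the two delicate points (the differing behaviour of $E_T$ and $1_T$ at the origin, and the need for $1\neq 0$ in the starred equivalences) is exactly right.
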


\begin{lemma}\label{lem:4}

Let \( X \) be a unital normed algebra, \( F \) be generalized real definite, and \(
S=N(F^{+})
\).
\\ (a)
\( |F|
\), \( F^{+}
\), and \( F^{-} \) commute with every operator \( W \in V(X,X) \) which commutes with \(
F \). \\ (b) \( F^{+}F^{-}=F^{-}F^{+}=0 \). \\ (c) \( F^{+}1_{S}= 1_{S}F^{+}=0 \) and
\( F^{-}1_{S}=1_{S}F^{-}=F^{-} \). \\ (d) \( F1_{S}=1_{S}F=-F^{-} \) and \(
F(1_{X}-1_{S})=(1_{X}-1_{S})F=F^{+} \).
\\ (e) \( F^{+} \geq 0 \) and \( F^{-} \geq 0 \).

\end{lemma}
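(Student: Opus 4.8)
The plan is to push everything through the pointwise description of $|F|$, $F^{+}$ and $F^{-}$. Since $F$ is generalized real definite, $[F(x),g(x)]_{X}\in R$ for every $x$, so I may set $\epsilon(x)=1$ when $[F(x),g(x)]_{X}\geq 0$ and $\epsilon(x)=-1$ when $[F(x),g(x)]_{X}<0$; then $|F|(x)=\epsilon(x)F(x)$ by Definition~\ref{def:4}, and
\begin{eqnarray*}
F^{+}(x)=\frac{1+\epsilon(x)}{2}\,F(x), \quad
F^{-}(x)=\frac{\epsilon(x)-1}{2}\,F(x), \quad x\in X .
\end{eqnarray*}
From this I record three facts used throughout: $F=F^{+}-F^{-}$ and $|F|=F^{+}+F^{-}$; for each fixed $x$ at most one of $F^{+}(x)$, $F^{-}(x)$ is nonzero (a nonzero $F^{+}(x)$ forces $\epsilon(x)=1$, a nonzero $F^{-}(x)$ forces $\epsilon(x)=-1$); and hence $S=N(F^{+})=\{x:F^{+}(x)=0\}$, with $x\notin S$ forcing $F^{-}(x)=0$.

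Parts (a)--(d) then follow quickly. For (a), each of $|F|(x)$, $F^{+}(x)$, $F^{-}(x)$ is a real scalar multiple of $F(x)$, and, by bilinearity of the multiplication of $X$ over $K$, such a multiple commutes (under the pointwise product $(AB)(x)=A(x)B(x)$, equivalently the $*$-product of Theorem~\ref{thm:5}) with any $W\in V(X,X)$ satisfying $W(x)F(x)=F(x)W(x)$ for all $x$, i.e.\ with any $W$ that commutes with $F$. For (b), for each $x$ one of $F^{+}(x)$, $F^{-}(x)$ vanishes, so $F^{+}(x)F^{-}(x)=F^{-}(x)F^{+}(x)=0$. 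For (c), evaluate the pointwise products $F^{+}1_{S}$ and $F^{-}1_{S}$ on $S$ and on its complement: on $S$ we have $F^{+}(x)=0$ and $1_{S}(x)=1$, off $S$ we have $1_{S}(x)=0$ and $F^{-}(x)=0$; since $1_{S}(x)\in\{0,1\}$ is central this gives $F^{+}1_{S}=1_{S}F^{+}=0$ and $F^{-}1_{S}=1_{S}F^{-}=F^{-}$. For (d), combine (c) with $F=F^{+}-F^{-}$ and $F1_{X}=F$ (recall $1_{X}(x)=1$): $F1_{S}=F^{+}1_{S}-F^{-}1_{S}=-F^{-}$ and $F(1_{X}-1_{S})=F-F1_{S}=F+F^{-}=F^{+}$, the reversed orders being automatic since $1_{S}$ and $1_{X}$ are scalar-valued.

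For (e), use Lemma~\ref{lem:2}(a): $|F|\geq 0$, i.e.\ $[\,|F|(x),g(x)]_{X}\geq 0$ for all $x$. Fix $x$. If $[F(x),g(x)]_{X}\geq 0$ then $F^{+}(x)=|F|(x)$ and $F^{-}(x)=0$, so $[F^{+}(x),g(x)]_{X}=[\,|F|(x),g(x)]_{X}\geq 0$ and $[F^{-}(x),g(x)]_{X}=[0,g(x)]_{X}=0$; if $[F(x),g(x)]_{X}<0$ the same conclusion holds with the roles of $F^{+}$ and $F^{-}$ interchanged. Since $[0,g(x)]_{X}=0$ (the quasi-product relation with all coefficients equal to $0$) and $g(x)\neq 0$ for $x\neq 0$, both $F^{+}$ and $F^{-}$ are generalized real definite for the given $g$ and are g-positive, which is (e).

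There is no genuine obstacle — as the text says, this is a direct check — so the plan is really careful bookkeeping around the pointwise formulas. The two points I would watch most closely are the boundary case $[F(x),g(x)]_{X}=0$ with $F(x)\neq 0$ (there $F^{+}(x)=F(x)\neq 0$, yet $[F^{+}(x),g(x)]_{X}=0\geq 0$ and $x\notin S$, all consistent) and the identity $[0,g(x)]_{X}=0$ invoked in (e); a minor third point is fixing the meaning of ``commutes with $F$'' in (a) as $W(x)F(x)=F(x)W(x)$ for all $x$ (the composition reading would fail), after which bilinearity does the rest.
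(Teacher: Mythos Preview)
Your proof is correct and is exactly the ``direct check'' the paper advertises (the full argument is relegated to the supplementary materials, but the intended verification is precisely the pointwise case analysis via $\epsilon(x)=\pm 1$ that you carry out). Your caveats---reading ``commutes'' as the pointwise product $W(x)F(x)=F(x)W(x)$ and deducing $[0,g(x)]_{X}=0$ from property~(c) of the quasi-product---are the right ones and match how these notions are used elsewhere in the paper.
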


The spectrum corresponding to the quasi-product and the g-positivity is
defined as follows.

\begin{definition}\label{def:6}

Let \( F: D(F) \rightarrow X \), where
\( D(F) \subset X \) and \( X \)
is a normed space. Let \( \gamma: D(F) \rightarrow X \) be a g-positive
operator (see Remark~\ref{rem:2}) satisfying
\( [\gamma(x),g(x)]_{X}=k_{1}(x)||x||_{X}||g(x)||_{X} \)
and
\( ||\gamma(x)||_{X}=k_{2}(x)||x||_{X} \)  for \( x \in D(F) \), where both \( k_{1} \) and \( k_{2} \)
are positive bounded functions and are bounded away from 0. The g-resolvent
set of
\( F \), denoted by \( \rho(F) \) and \(  \rho(F) \subset C \),
consists of the scalars \( \lambda \) such that
\( R_{\lambda}=(F-\lambda \gamma)^{-1} \) exists (see \cite{kreyszig}, A1.2), is bounded, and
\( D(R_{\lambda}) \) is a dense set of \( X \). The set
\( \sigma(F)=C \backslash \rho(F) \) is referred to as the g-spectrum of \( F \). As
\( F(x)=\lambda \gamma(x) \) for some
\( x \neq 0 \), \( x \) is referred to as the g-eigenvector of
\( F \) corresponding to the g-eigenvalue \( \lambda \).

\end{definition}

In this article, the generalized real definite operators of interest and the
g-positive operator
\( \gamma \) are in relation to the same quasi-product and
the same operator \( g \).

\begin{remark}\label{rem:4}

The above definition can be also used for the operator \( \tilde{F}
\) with
\( \tilde{F}(0) \neq 0 \). Moreover,
as \( X \) and \( Y \) are Banach spaces and \( G, J \) are in the space of
continuous operators from \( X \) into \( Y \), the classical eigenvalue \(
\lambda
\) of the pair \( (G,J) \) corresponding to the eigenvector \( x \neq 0 \)
satisfies the equation \( G(x)=\lambda J(x) \) (see \cite{appell}, Chapter 9.5,
Chapter 10). As \( X \) and
\( Y \) are normed spaces, the g-eigenvalue \( \lambda \) and g-eigenvector \( x \neq 0 \)
can be defined similarly for the operators
 \( F: D(F) \rightarrow Y \)
and \( \gamma: D(F) \rightarrow Y \) (also see Remark~\ref{rem:2}), i.e., \( F(x)=\lambda
\gamma(x)
\), where
\( D(F) \) is a subset of \( X \).

\end{remark}

The following two lemma, Lemma~\ref{lem:5} and Lemma~\ref{lem:6}, give the results for the
values of the quasi-product of the bounded operators
and the g-eigenvalues of the bounded generalized real
definite operators, respectively.

\begin{lemma}\label{lem:5}

Let \( F \in B(X) \), where \( X \) is a normed space. There exists a positive
number \( \overline{k} \) such that
\begin{eqnarray*}
\left|\left[F(x),g(x)\right]_{X}\right| \leq \overline{k} \left|\left|x\right|\right|_{X}
\left|\left|g(x)\right|\right|_{X}
\end{eqnarray*}
for \( x \in X \).

\end{lemma}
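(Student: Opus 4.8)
The plan is to combine the boundedness of $F$ with property (b) of the quasi-product. By property (b) in Definition~\ref{def:1} (applied with $S=X$) there is a positive constant $\overline{c}$ such that $\left|[u,v]_X\right| \leq \overline{c}\,\|u\|_X\,\|v\|_X$ for all $u,v \in X$; in particular
\[
\left|[F(x),g(x)]_X\right| \leq \overline{c}\,\|F(x)\|_X\,\|g(x)\|_X
\]
for every $x \in X$. This reduces the task to bounding $\|F(x)\|_X$ by a multiple of $\|x\|_X$.

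First I would treat the case $x \neq 0$: since $F \in B(X)$, Corollary~\ref{cor:1} gives $\|F(x)\|_X \leq \|F\|_{B(X)}\|x\|_X$, so the displayed inequality becomes $\left|[F(x),g(x)]_X\right| \leq \overline{c}\,\|F\|_{B(X)}\,\|x\|_X\,\|g(x)\|_X$. Then I would handle $x=0$ separately: the right-hand side of the asserted bound vanishes there, and so does the left-hand side, since by the standing hypothesis of this section $F(0)=0$, and taking $n=1$ with $\alpha_1=0$ in property (c) of Definition~\ref{def:1} shows $[0,v]_X=0$ for every $v \in X$, whence $[F(0),g(0)]_X=0$. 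Combining the two cases, the inequality $\left|[F(x),g(x)]_X\right| \leq \overline{k}\,\|x\|_X\,\|g(x)\|_X$ holds for all $x \in X$ with $\overline{k}=\overline{c}\,(\|F\|_{B(X)}+1)$, which is a positive number (the extra $+1$ only serves to keep $\overline{k}$ positive in the degenerate case $F=0$).

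The argument presents no genuine obstacle; the only points needing a moment's care are the case $x=0$, where one must invoke $F(0)=0$ together with the homogeneity of the quasi-product in its first argument to see that both sides vanish, and the choice of $\overline{k}$ strictly positive so that the statement stays correct when $F=0$.
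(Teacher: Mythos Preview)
Your proof is correct and is precisely the natural argument: combine property~(b) of the quasi-product with the bound $\|F(x)\|_X \le \|F\|_{B(X)}\|x\|_X$ from Corollary~\ref{cor:1}. The paper delegates this proof to the supplementary materials as routine, and your approach is what one expects there. One minor simplification: at $x=0$ you do not need property~(c), since property~(b) already gives $|[F(0),g(0)]_X| \le \overline{c}\,\|F(0)\|_X\,\|g(0)\|_X = 0$ directly from $F(0)=0$.
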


\begin{lemma}\label{lem:6}

Let \( F \) be generalized real definite defined on \( X \), where \( X \) is a
normed space. Then all g-eigenvalues of \( F \), if exist, are real. Further, as \( F
\in B(X) \), all g-eigenvalues, if exist, fall in some bounded interval of \( R \).

\end{lemma}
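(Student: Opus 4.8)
The plan is to exploit the single defining relation $F(x)=\lambda\gamma(x)$ of a g-eigenpair, together with property (c) of the quasi-product in Definition~\ref{def:1} and the normalizations imposed on $\gamma$ in Definition~\ref{def:6}. Fix a g-eigenvalue $\lambda$ with g-eigenvector $x\neq 0$. Since the quasi-product here is $[\cdot,\cdot]_{X}$ (so the underlying set is all of $X$), property (c) with $n=1$ applied to the pair $(\gamma(x),g(x))$ gives
\begin{eqnarray*}
[F(x),g(x)]_{X}=[\lambda\gamma(x),g(x)]_{X}=c(g(x))\,\lambda\,[\gamma(x),g(x)]_{X}=c(g(x))\,\lambda\,k_{1}(x)\,\|x\|_{X}\,\|g(x)\|_{X},
\end{eqnarray*}
the last step using $[\gamma(x),g(x)]_{X}=k_{1}(x)\|x\|_{X}\|g(x)\|_{X}$.

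Next I would observe that every factor on the right besides $\lambda$ is a strictly positive real number: $c$ is positive, so $c(g(x))>0$; $k_{1}$ is positive, so $k_{1}(x)>0$; and $x\neq 0$ forces $\|x\|_{X}>0$ and, since $g(x)\neq 0$ for $x\neq 0$, also $\|g(x)\|_{X}>0$. Because $F$ is generalized real definite, $[F(x),g(x)]_{X}\in R$. Hence
\begin{eqnarray*}
\lambda=\frac{[F(x),g(x)]_{X}}{c(g(x))\,k_{1}(x)\,\|x\|_{X}\,\|g(x)\|_{X}}
\end{eqnarray*}
is a real number divided by a positive real number, so $\lambda\in R$, which settles the first claim.

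For the second claim, suppose $F\in B(X)$ and let $\underline{c}>0$ and $\underline{k}_{1}>0$ be lower bounds for $c$ and $k_{1}$ (these exist since both are bounded away from $0$). By Lemma~\ref{lem:5} there is a positive number $\overline{k}$ with $|[F(x),g(x)]_{X}|\leq\overline{k}\,\|x\|_{X}\,\|g(x)\|_{X}$ for all $x\in X$. Feeding this into the formula for $\lambda$ and cancelling the factor $\|x\|_{X}\|g(x)\|_{X}>0$ yields
\begin{eqnarray*}
|\lambda|\leq\frac{\overline{k}\,\|x\|_{X}\,\|g(x)\|_{X}}{c(g(x))\,k_{1}(x)\,\|x\|_{X}\,\|g(x)\|_{X}}\leq\frac{\overline{k}}{\underline{c}\,\underline{k}_{1}},
\end{eqnarray*}
a bound independent of the chosen g-eigenpair. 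Therefore every g-eigenvalue lies in the bounded interval $[-M,M]$ with $M=\overline{k}/(\underline{c}\,\underline{k}_{1})$.

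I do not expect a real obstacle here: the argument is essentially a one-line computation once property (c) is invoked. The only points demanding care are (i) checking that the denominator $c(g(x))\,k_{1}(x)\,\|x\|_{X}\,\|g(x)\|_{X}$ is genuinely nonzero, which relies on the "bounded away from $0$" hypotheses on $c$ and $k_{1}$ together with $g(x)\neq 0$ for $x\neq 0$; and (ii) ensuring the estimate in the second part is uniform in $x$, which again uses only the uniform positive lower bounds for $c$ and $k_{1}$ and the uniform bound furnished by Lemma~\ref{lem:5}.
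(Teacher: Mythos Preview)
Your argument is correct and is essentially the natural direct computation the paper has in mind: apply property (c) of the quasi-product to $[\lambda\gamma(x),g(x)]_{X}$, use the normalization $[\gamma(x),g(x)]_{X}=k_{1}(x)\|x\|_{X}\|g(x)\|_{X}$ from Definition~\ref{def:6}, and then invoke the real-valuedness of $[F(x),g(x)]_{X}$ together with Lemma~\ref{lem:5} for the bounded case. The paper delegates the proof to supplementary materials, but given the available tools there is no materially different route, and your handling of the positivity of the denominator and the uniformity of the bound is exactly what is required.
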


Note that the results in Lemma~\ref{lem:6} also hold for the generalized real definite
operator
\(
\tilde{F}
\) with
\( \tilde{F}(0) \neq 0 \), i.e., all g-eigenvalues of \( \tilde{F} \), if exist, are real and
fall in some bounded interval of \( R \) if \( \tilde{F} \in B(X) \).

Hereafter denote \( F_{\lambda}=F-\lambda \gamma \) and let
\( 1_{\lambda} \) and \( E_{\lambda} \) be
the projection indicator and the projection operator corresponding to
\( N(F_{\lambda}^{+}) \), respectively, where \( \lambda \in R \). In addition,
let \( \Delta=\mu - \lambda \), \( E_{\Delta}=E_{\mu}-E_{\lambda} \), and
\( 1_{\Delta}=1_{\mu}-1_{\lambda} \), where \( \lambda < \mu \).

\begin{lemma}\label{lem:7}

Let \( X \) be a unital normed algebra. Suppose that
\( F \in B(X) \) is generalized real definite, \( \mu, \lambda \in R \), and \(
\lambda < \mu
\). \\
(a) \( ||1_{\lambda}(x)||_{X} \leq ||1_{\mu}(x)||_{X} \) for \( x \in X \). \\ (b)
\( \lim_{\lambda \rightarrow -\infty}\sup_{x \neq 0,x \in X} 1_{\lambda}(x)=0
\). Further, \( \lim_{\lambda \rightarrow -\infty}E_{\lambda}=0 \),
\( \lim_{\lambda \rightarrow \infty}1_{\lambda}=1_{X} \), and
\( \lim_{\lambda \rightarrow \infty}E_{\lambda}=I \) with respect to the norm
topology \( ||\cdot||_{B(X)} \), where \( I \) is the identity operator defined on
\( X \), i.e.,
\( I(x)=x
\) for \( x \in X \).
\\ (c) \( \lambda \gamma \circ E_{\Delta} \leq F 1_{\Delta} \leq \mu \gamma
\circ E_{\Delta} \).

\end{lemma}

\begin{proof} (a): If  \( x \neq 0 \) and \( 1_{\lambda}(x)=1 \), i.e.,
\( x \in N(F_{\lambda}^{+}) \), \( [F_{\lambda}(x),g(x)]_{X} \leq 0 \) then and thus \( [F(x),g(x)]_{X}
\leq [\lambda \gamma(x),g(x)]_{X} \). Since
\( [\lambda \gamma(x),g(x)]_{X} < [\mu \gamma(x),g(x)]_{X} \),
\( [F(x),g(x)]_{X} < [\mu \gamma(x),g(x)]_{X} \) and
\( [F_{\mu}(x),g(x)]_{X}=[F(x)-\mu \gamma(x),g(x)]_{X} < 0 \) thus. The last
inequality implies
\( F_{\mu}^{+}(x)=0 \) and \( 1_{\mu}(x)=1 \) then. Therefore,
\( ||1_{\lambda}(x)||_{X} \leq ||1_{\mu}(x)||_{X} \) for \( x \in X \). \\
(b): By Lemma~\ref{lem:5}, there exist positive numbers \( \overline{k} \), \( k_{1} \), and
\( k_{2} \) such that
\begin{eqnarray*}
&&\left[F_{\lambda}(x),g(x)\right]_{X} \\ &\geq&
k_{1}\left\{\left[F(x),g(x)\right]_{X}-\left[\lambda
\gamma(x),g(x)\right]_{X}\right\} \\ &\geq&
k_{1}(-\overline{k}-k_{2}\lambda)\left|\left|x\right|\right|_{X}
\left|\left|g(x)\right|\right|_{X}
\\ &>& 0
\end{eqnarray*}
for \( \lambda < -\overline{k}/k_{2} \) and \( x \neq 0 \). This gives that \(
1_{\lambda}(x)=0 \) for
\( \lambda < -\overline{k}/k_{2} \). Similarly, there exist positive numbers \( k^{\ast}_{1} \) and \( k^{\ast}_{2} \)
such that
\begin{eqnarray*}
&&\left[F_{\lambda}(x),g(x)\right]_{X} \\ &\leq&
k^{\ast}_{1}\left\{\left[F(x),g(x)\right]_{X}-\left[\lambda
\gamma(x),g(x)\right]_{X}\right\} \\ &\leq&
k^{\ast}_{1}(\overline{k}-k^{\ast}_{2}\lambda)\left|\left|x\right|\right|_{X}
\left|\left|g(x)\right|\right|_{X}
\\ &<& 0
\end{eqnarray*}
for \( \lambda >\overline{k}/ k^{\ast}_{2} \) and \( x \neq 0 \). This gives that \(
1_{\lambda}=1_{X} \) for \( \lambda > \overline{k}/k^{\ast}_{2} \). The results
for \( E_{\lambda} \) follow because \( E_{\lambda}(x)=1_{\lambda}(x) x \) for \(
x \in X \).
\\ (c): Since \( F_{\mu} 1_{\Delta}=F 1_{\Delta} -\mu \gamma
\circ E_{\Delta} \) and
\( -F_{\mu}1_{\Delta}=-F_{\mu}1_{\mu}(1_{\mu}-1_{\lambda})=
F_{\mu}^{-}(1_{\mu}-1_{\lambda}) \geq 0 \) by Lemma~\ref{lem:3} (b), Lemma~\ref{lem:4} (d), and
Lemma~\ref{lem:4} (e), hence \( F 1_{\Delta} \leq
\mu \gamma \circ E_{\Delta} \). Similarly, because \( F_{\lambda} 1_{\Delta} =F
1_{\Delta} -\lambda \gamma \circ E_{\Delta} \) and
\( F_{\lambda}1_{\Delta}=F_{\lambda}(1_{X}-
1_{\lambda})(1_{\mu}-1_{\lambda}) =F_{\lambda}^{+}(1_{\mu}-1_{\lambda})
\geq 0 \) by Lemma~\ref{lem:3} (b), Lemma~\ref{lem:4} (d), and Lemma~\ref{lem:4} (e), hence \(
F 1_{\Delta} \geq \lambda
\gamma \circ E_{\Delta} \) holds. \end{proof}

\subsection{Spectral theorems}

In this subsection, the spectral resolutions of the generalized real definite
operators in terms of the quasi-product and with respect to some topology are stated in
Theorem~\ref{thm:8} and Theorem~\ref{thm:9}, respectively. Let
\( X \) be a unital Banach algebra in the subsection.

The spectral resolution of interest can be defined based on the lemma below.

\begin{lemma}\label{lem:8}

Let \( F \in B(X) \) be generalized real definite. There exists a bounded interval \(
[m,M] \) with any partition \( \{s_{j}\} \) satisfying
\( m=s_{0}<s_{1}<\cdots<s_{n}=M \) and
\( \Delta_{j}=s_{j}-s_{j-1} < \epsilon_{n} \) such that
\( F_{n}=\sum_{j=1}^{n} \lambda_{j}(\gamma \circ E_{\Delta_{j}}) \)
converges to an operator in \( B(X) \) with respect to the norm
topology \( ||\cdot||_{B(X)} \) and the convergence is independent of
the choice of \(
\lambda_{j}
\in (s_{j-1},s_{j}] \) as \( n \rightarrow\infty \), where \( 1_{\lambda}(x)=0 \)
for \( x \neq 0 \) as \( \lambda=m \), \( 1_{\lambda}=1_{X} \) as \( \lambda=M
\), and
\( 0 < \epsilon_{n}  \mathop{\longrightarrow}\limits_
{n \rightarrow\infty} 0 \).

\end{lemma}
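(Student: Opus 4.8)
The plan is to realize each $F_n$ as an explicit step operator attached to a nested family of subsets of $X$, and then run the classical Riemann--Stieltjes Cauchy argument, but carried out directly in the Banach space $B(X)$ rather than through quasi-product estimates.

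First I would fix the interval: by Lemma~\ref{lem:7}(b), $1_\lambda(x)=0$ for every $x\neq 0$ once $\lambda$ is sufficiently negative and $1_\lambda=1_X$ once $\lambda$ is sufficiently large, so any $[m,M]$ whose endpoints lie beyond these two thresholds meets the stated conditions $1_m(x)=0$ for $x\neq0$ and $1_M=1_X$. For such a partition, put $S_j=N(F_{s_j}^+)$. By (the proof of) Lemma~\ref{lem:7}(a) the $S_j$ are nested, $S_{j-1}\subseteq S_j$, and the choice of $m,M$ gives $S_0=\{0\}$ and $S_n=X$; hence $X\setminus\{0\}$ is the disjoint union of the ``shells'' $S_j\setminus S_{j-1}$, $j=1,\dots,n$. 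By Lemma~\ref{lem:3}(c) the operator $E_{\Delta_j}=E_{s_j}-E_{s_{j-1}}$ is the identity on the $j$-th shell and vanishes off it, and since $\gamma(0)=0$ (because $\|\gamma(0)\|_X=k_2(0)\|0\|_X=0$) we get $(\gamma\circ E_{\Delta_j})(x)=\gamma(x)$ for $x$ in the $j$-th shell and $0$ otherwise. Consequently $F_n(x)=\lambda_k\gamma(x)$ whenever $x\neq0$ lies in the $k$-th shell, and $F_n(0)=0$; using $\|\gamma(x)\|_X=k_2(x)\|x\|_X$ with $k_2$ bounded by some $\overline{k}_2$, this already shows $F_n\in B(X)$ with $\|F_n\|_{B(X)}\le\max(|m|,|M|)\,\overline{k}_2$.

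The core is the Cauchy estimate. Let $P$ have mesh $<\epsilon_n$ and let $P'$ refine $P$. If $x\neq0$ lies in the $j$-th shell of $P$, then monotonicity of $N(F_\bullet^+)$ together with $P'\supseteq P$ forces the defining subinterval of the $P'$-shell containing $x$ to be contained in $(s_{j-1},s_j]$, so its tag and the tag $\lambda_j$ of the $j$-th $P$-shell both lie in $(s_{j-1},s_j]$ and differ by less than $\Delta_j<\epsilon_n$. Hence
\begin{eqnarray*}
\|(F_P-F_{P'})(x)\|_X<\epsilon_n\,\overline{k}_2\,\|x\|_X ,
\end{eqnarray*}
and since $(F_P-F_{P'})(0)=0$ this gives $\|F_P-F_{P'}\|_{B(X)}\le\epsilon_n\overline{k}_2$; passing to a common refinement then bounds $\|F_P-F_{P'}\|_{B(X)}$ by $2\epsilon_n\overline{k}_2$ for any two partitions of mesh $<\epsilon_n$, and the same computation with $P'=P$ but a different choice of tags bounds the effect of changing $\lambda_j\in(s_{j-1},s_j]$ by $\epsilon_n\overline{k}_2$. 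Since $X$ is a Banach algebra, $B(X)$ is a Banach space by Theorem~\ref{thm:3}, so as $\epsilon_n\to0$ the sequence $F_n$ is Cauchy, hence converges in $B(X)$, with a limit independent of the tags and of the particular choice of partitions. I do not anticipate a genuine obstacle here: the only delicate points are the bookkeeping that the shells partition $X\setminus\{0\}$ and that $E_{\Delta_j}$ localizes to the $j$-th shell --- both handed to us by Lemmas~\ref{lem:3} and~\ref{lem:7} --- together with the observation that on each shell $F_n$ is literally a scalar multiple of $\gamma(x)$, which is exactly what lets the mesh control $\|F_P-F_{P'}\|_{B(X)}$ without ever estimating a quasi-product.
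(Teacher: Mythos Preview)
Your proposal is correct and rests on the same structural observation the paper uses: once the interval $[m,M]$ is fixed via Lemma~\ref{lem:7}(b), the shells $N(F_{s_j}^+)\setminus N(F_{s_{j-1}}^+)$ partition $X\setminus\{0\}$, and on the $j$-th shell the Riemann--Stieltjes sum reduces to $F_n(x)=\lambda_j\gamma(x)$, so the whole problem is a mesh-controlled perturbation of a scalar multiple of $\gamma$.

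The difference is purely at the final step. You run a Cauchy argument---refine, bound $\|F_P-F_{P'}\|_{B(X)}\le\epsilon_n\overline{k}_2$, pass to a common refinement, then invoke completeness of $B(X)$ (Theorem~\ref{thm:3}). The paper instead names the limit directly: for each $x\neq0$ it defines $\lambda_x$ to be the jump point of $\lambda\mapsto 1_\lambda(x)$ (using right-continuity and the boundary values at $m,M$), sets $\tilde F(x)=\lambda_x\gamma(x)$, and checks in one line that $\|F_n(x)-\tilde F(x)\|_X=|\lambda_j-\lambda_x|\,\|\gamma(x)\|_X\le\overline{k}\epsilon_n\|x\|_X$ because $\lambda_x$ lies in the same subinterval $(s_{j-1},s_j]$ as the tag $\lambda_j$. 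This is shorter, avoids the refinement bookkeeping and the appeal to completeness, and, more importantly, delivers an explicit formula for the limit operator---which is exactly what gets used downstream in Theorem~\ref{thm:8}. Your route is a perfectly valid substitute, but the explicit construction is worth knowing.
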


\begin{proof} By Lemma~\ref{lem:3} (b) and Lemma~\ref{lem:7} (b), there exist \( m \) and
\( M \) such that \( 1_{\lambda}(x)=0 \) for \( x \neq 0 \) as \( \lambda \leq m \)
and \( 1_{\lambda}=1_{X} \) as \( \lambda \geq M \). Since for fixed \( x \neq 0
\), \( 1_{m}(x)=0 \), \( 1_{M}(x)=1 \), and
\( 1_{\lambda}(x) \), considered as a function of \(
\lambda \), is right-continuous, there exists
\( \lambda_{x} \in [m,M] \) such that \( 1_{\lambda_{x}}(x)=1 \) and
\( 1_{\lambda}(x)=0 \) for \( \lambda < \lambda_{x} \). Define the operator
\( \tilde{F}(x)=\lambda_{x}\gamma(x) \) as \( x \neq 0 \) and \( \tilde{F}(0)=0 \).
Then \( \tilde{F} \in B(X) \). Since \( \lambda_{x} \in (s_{j-1},s_{j}] \) for some
\( j \) and \( \Delta_{j} < \epsilon_{n} \), hence
\begin{eqnarray*}
&&\left|\left| F_{n}(x)-\tilde{F}(x)\right|\right|_{X} \\ &=& \left|\left|
(\lambda_{j}-\lambda_{x})\gamma(x)\right|\right|_{X} \\ &\leq& \overline{k}
\epsilon_{n}
\left|\left| x\right|\right|_{X}
\end{eqnarray*}
for \( x \in X \), where \( \overline{k} \) is some positive number. \end{proof}

The following definition gives the spectral resolution in terms of operator convergence.

\begin{definition}\label{def:7}

Let \( X \) be a unital Banach algebra, \( F \in B(X) \) be generalized real definite,
and
\(
\{s_{j}\}
\) be any partition of a bounded interval \( [m,M] \) with
\( m=s_{0}<s_{1}<\cdots<s_{n}=M \) and \( \Delta_{j}=s_{j}-s_{j-1}
< \epsilon_{n} \), where
\( 0 < \epsilon_{n}  \mathop{\longrightarrow}\limits_
{n \rightarrow\infty} 0 \). If
\( \sum_{j=1}^{n} \lambda_{j} (\gamma \circ E_{\Delta_{j}}) \) converges to
an operator in the sense of operator
convergence, i.e., with respect to the norm topology \(
||\cdot||_{B(X)} \), and the convergence is independent of the choice of \(
\lambda_{j} \) for \( \lambda_{j} \in (s_{j-1},s_{j}] \) as \( n \rightarrow\infty \),
the limit operator is denoted as \( \int_{m}^{M}\lambda d (\gamma \circ
E_{\lambda}) \). Furthermore, if
\(  f(\lambda_{1})1_{s_{1}}+\sum_{j=2}^{n} f(\lambda_{j}) 1_{\Delta_{j}} \)
converges to an operator in the sense of
operator convergence and the convergence is independent of the choice of \(
\lambda_{j}
\) as \( n \rightarrow\infty \), the limit operator is denoted as \( \int_{m}^{M}
f(\lambda) d 1_{\lambda} \), where \( f: R \rightarrow V(X,X) \) is a mapping
from \( R \) into \( V(X,X) \).

\end{definition}

The following two theorems give the spectral
representations of the generalized real definite operators in terms of the
quasi-product and with respect to the
norm topologies \( ||\cdot||_{B(X)} \), respectively. Moreover, \(
[F(x),g(x)]_{X}, x \in X \), can be expressed as an ordinary Riemann-Stieltjes
integral in terms of a certain equivalence relation. The involved equivalence
relation
\( \equiv \) for two functionals
\( G_{1} \) and \( G_{2} \) from \( X \) into \( K \) is denoted as
\( G_{1} \equiv G_{2} \) (or \( G_{1}(x) \equiv G_{2}(x) \) for convenience) if and only if
there exist positive numbers \( \underline{k} \) and \( \overline{k} \) such that
\begin{eqnarray*}
\underline{k} \left|G_{2}(x)\right|
\leq \left|G_{1}(x)\right|
\leq \overline{k} \left|G_{2}(x)\right|
\end{eqnarray*}
for all \( x \in X \).

\begin{theorem}\label{thm:8}

Let \( F \in B(X) \) be generalized real definite. Then
\begin{eqnarray*}
\left[F(x),g(x)\right]_{X}=\left[\left[\int_{m}^{M}\lambda d (\gamma \circ
E_{\lambda})\right](x),g(x)\right]_{X}
\end{eqnarray*}
and
\begin{eqnarray*}
\left[F(x),g(x)\right]_{X} \equiv \int_{m}^{M}\lambda d w_{x}(\lambda)
\end{eqnarray*}
for \( x \in X \), where \( [m,M] \) is some bounded interval depending on \( F
\),
\( w_{x}(\lambda)=[(\gamma \circ E_{\lambda})(x),g(x)]_{X} \),
and the second integral is the
ordinary Riemann-Stieltjes integral.

\end{theorem}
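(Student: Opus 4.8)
The plan is to derive both identities from Lemma~\ref{lem:8}, Lemma~\ref{lem:7}(c), and the order and continuity properties of the quasi-product, by sandwiching $F$ between lower and upper Riemann--Stieltjes-type sums. Fix the bounded interval $[m,M]$ supplied by Lemma~\ref{lem:8}, so that $1_{\lambda}(x)=0$ for $x\neq 0$ when $\lambda=m$ and $1_{\lambda}=1_{X}$ when $\lambda=M$, and write $\hat{F}$ for the operator $\int_{m}^{M}\lambda\,d(\gamma\circ E_{\lambda})$, the $B(X)$-limit of the sums $F_{n}=\sum_{j=1}^{n}\lambda_{j}(\gamma\circ E_{\Delta_{j}})$ (Definition~\ref{def:7}). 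For a partition $m=s_{0}<\cdots<s_{n}=M$ of mesh below $\epsilon_{n}$, put $1_{j}=1_{s_{j}}$, $E_{j}=E_{s_{j}}$, $1_{\Delta_{j}}=1_{j}-1_{j-1}$ and $E_{\Delta_{j}}=E_{j}-E_{j-1}$. Since $E_{S}(x)=1_{S}(x)\,x$ for every $S$ containing $0$, since $\gamma(0)=0$ (forced by $\|\gamma(x)\|_{X}=k_{2}(x)\|x\|_{X}$), and since $1_{\lambda}(x)$ is monotone in $\lambda$ with $1_{m}(x)=0$ and $1_{M}(x)=1$ for $x\neq 0$ (Lemma~\ref{lem:7}(a)), for each such $x$ exactly one index $k=k(x)$ satisfies $1_{\Delta_{k}}(x)=1$ and all others vanish; hence $\sum_{j}1_{\Delta_{j}}(x)=1$ in $X$, so by distributivity in the algebra $X$ one gets $F=\sum_{j=1}^{n}F\,1_{\Delta_{j}}$ pointwise and $F_{n}(x)=\lambda_{k(x)}\gamma(x)$ (the case $x=0$ being trivial throughout, since $F(0)=\gamma(0)=\hat{F}(0)=0$).

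Next I would apply Lemma~\ref{lem:7}(c) with $\lambda=s_{j-1}$ and $\mu=s_{j}$ to get $s_{j-1}(\gamma\circ E_{\Delta_{j}})\leq F\,1_{\Delta_{j}}\leq s_{j}(\gamma\circ E_{\Delta_{j}})$, and sum over $j$ via Lemma~\ref{lem:1}(d); together with $F=\sum_{j}F\,1_{\Delta_{j}}$ this gives $\underline{F}_{n}\leq F\leq\overline{F}_{n}$, where $\underline{F}_{n}=\sum_{j}s_{j-1}(\gamma\circ E_{\Delta_{j}})$ and $\overline{F}_{n}=\sum_{j}s_{j}(\gamma\circ E_{\Delta_{j}})$. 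By Lemma~\ref{lem:8}, $\overline{F}_{n}\to\hat{F}$ in $B(X)$; and from $(\underline{F}_{n}-\overline{F}_{n})(x)=-\Delta_{k(x)}\gamma(x)$ we get $\|\underline{F}_{n}-\overline{F}_{n}\|_{B(X)}\leq\epsilon_{n}\sup_{x}k_{2}(x)\to 0$, so $\underline{F}_{n}\to\hat{F}$ in $B(X)$ as well. Fixing $x$, Corollary~\ref{cor:1} yields $\underline{F}_{n}(x)\to\hat{F}(x)$ and $\overline{F}_{n}(x)\to\hat{F}(x)$ in $X$; Lemma~\ref{lem:1}(a) gives $[\underline{F}_{n}(x),g(x)]_{X}\leq[F(x),g(x)]_{X}\leq[\overline{F}_{n}(x),g(x)]_{X}$; and, the quasi-product being continuous in its first argument, both outer terms tend to $[\hat{F}(x),g(x)]_{X}$. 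A squeeze in $R$ then gives the first identity $[F(x),g(x)]_{X}=[\hat{F}(x),g(x)]_{X}$.

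For the second identity, Lemma~\ref{lem:8} also gives $\hat{F}(x)=\lambda_{x}\gamma(x)$, where $\lambda_{x}\in(m,M]$ is the point at which $1_{\lambda}(x)$ jumps from $0$ to $1$. Hence $w_{x}(\lambda)=[(\gamma\circ E_{\lambda})(x),g(x)]_{X}$ equals $0$ for $\lambda<\lambda_{x}$ and equals $[\gamma(x),g(x)]_{X}=k_{1}(x)\|x\|_{X}\|g(x)\|_{X}$ for $\lambda_{x}\leq\lambda\leq M$; thus $w_{x}$ is a nondecreasing step function with a single jump lying in $[m,M]$, the ordinary Riemann--Stieltjes integral $\int_{m}^{M}\lambda\,dw_{x}(\lambda)$ exists, and it equals $\lambda_{x}k_{1}(x)\|x\|_{X}\|g(x)\|_{X}$. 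On the other hand, by the first identity and property~(c) of Definition~\ref{def:1}, $[F(x),g(x)]_{X}=[\lambda_{x}\gamma(x),g(x)]_{X}=c(g(x))\,\lambda_{x}[\gamma(x),g(x)]_{X}=c(g(x))\int_{m}^{M}\lambda\,dw_{x}(\lambda)$; since $c$ is positive, bounded, and bounded away from $0$, there are positive constants $\underline{k}\leq c\leq\overline{k}$, whence $\underline{k}\left|\int_{m}^{M}\lambda\,dw_{x}(\lambda)\right|\leq\left|[F(x),g(x)]_{X}\right|\leq\overline{k}\left|\int_{m}^{M}\lambda\,dw_{x}(\lambda)\right|$ for all $x$, i.e.\ $[F(x),g(x)]_{X}\equiv\int_{m}^{M}\lambda\,dw_{x}(\lambda)$.

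I expect the delicate part to be the second identity. There the real work is to peel off the scalar-multiplication factor $c(g(x))$ produced by property~(c) of the quasi-product --- which is precisely why the statement claims only the equivalence $\equiv$ and not an equality --- and to confirm that $w_{x}$ is of bounded variation (here, a monotone step function whose single jump Lemma~\ref{lem:8} forces into $[m,M]$), so that the ordinary Riemann--Stieltjes integral is meaningful. By comparison, the sandwich argument for the first identity is routine once Lemma~\ref{lem:7}(c) and Lemma~\ref{lem:8} are in hand.
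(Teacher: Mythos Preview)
Your argument is correct and, for the first identity, follows the paper's proof almost verbatim: both use Lemma~\ref{lem:7}(c) to sandwich $F$ between the upper and lower Riemann--Stieltjes sums, then pass to the limit using the $B(X)$-convergence from Lemma~\ref{lem:8} and the continuity of the quasi-product in its first argument. The paper phrases the squeeze one-sidedly (showing $0\leq[\overline{F}_{n}(x)-F(x),g(x)]_{X}\leq \overline{k}\epsilon_{n}\|x\|_{X}\|g(x)\|_{X}$ and letting $n\to\infty$), while you carry both bounds; this is the same idea.

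For the second identity your route differs slightly from the paper's. The paper stays at the level of Riemann sums: it uses property~(c) of the quasi-product to write $\bigl[\sum_{j}\lambda_{j}(\gamma\circ E_{\Delta_{j}})(x),g(x)\bigr]_{X}\equiv\sum_{j}\lambda_{j}\bigl[(\gamma\circ E_{\Delta_{j}})(x),g(x)\bigr]_{X}$, recognizes the right-hand side as a Riemann--Stieltjes sum for $\int_{m}^{M}\lambda\,dw_{x}(\lambda)$, and then passes both sides to the limit. You instead reach into the \emph{proof} of Lemma~\ref{lem:8} to extract the explicit form $\hat{F}(x)=\lambda_{x}\gamma(x)$, exhibit $w_{x}$ as a single-step function, and evaluate the Riemann--Stieltjes integral directly as $\lambda_{x}[\gamma(x),g(x)]_{X}$. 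Your approach is more concrete and makes the existence of the ordinary Riemann--Stieltjes integral transparent; the paper's is slightly more self-contained in that it cites only the statement of Lemma~\ref{lem:8}. Either way, the factor $c(g(x))$ coming from property~(c) is what forces the relation to be an equivalence $\equiv$ rather than an equality, exactly as you note.
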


\begin{proof} \( \int_{m}^{M}\lambda d (\gamma \circ E_{\lambda}) \) exists
by Lemma~\ref{lem:8}, where \( 1_{m}(x)=0 \) for \( x \neq 0 \) and \( 1_{M}=1_{X} \).
Note that \( \sum_{j=1}^{n}\Delta_{j}=M-m \) and \(
\sum_{j=1}^{n}1_{\Delta_{j}}(x)=1 \) for \( x \neq 0 \). Then
\( F=F\sum_{j=1}^{n}1_{\Delta_{j}}=\sum_{j=1}^{n}F1_{\Delta_{j}} \).
By Lemma~\ref{lem:7} (c) and Lemma~\ref{lem:1} (b), (c), and (d),
\begin{eqnarray*}
\sum_{j=0}^{n-1}s_{j}(\gamma \circ E_{\Delta_{j+1}}) \leq F \leq \sum_{j=1}^{n}
s_{j}(\gamma \circ E_{\Delta_{j}})
\end{eqnarray*}
and hence
\begin{eqnarray*}
0 \leq \sum_{j=1}^{n}s_{j}(\gamma \circ E_{\Delta_{j}}) -F
\leq \sum_{j=1}^{n}\Delta_{j}(\gamma \circ E_{\Delta_{j}})
\leq \epsilon_{n} \sum_{j=1}^{n} \gamma \circ E_{\Delta_{j}}.
\end{eqnarray*}
Then by Lemma~\ref{lem:1} (a),
\begin{eqnarray*}
&& \left[\sum_{j=1}^{n}s_{j}(\gamma \circ
E_{\Delta_{j}})(x)-F(x),g(x)\right]_{X} \\ &\leq& \left[\epsilon_{n}
\sum_{j=1}^{n} (\gamma \circ E_{\Delta_{j}})(x),g(x)\right]_{X} \\ &=&
\left[\epsilon_{n} \gamma(x),g(x)\right]_{X} \\ &\leq& \overline{k} \epsilon_{n}
\left|\left|x\right|\right|_{X}\left|\left|g(x)\right|\right|_{X}
\end{eqnarray*}
for \( x \in X \), where \( \overline{k} \) is some positive number. \(
[[\int_{m}^{M}\lambda d (\gamma
\circ E_{\lambda})-F](x),g(x)]_{X}=0 \) by the continuity of the quasi-product
and \( [[\int_{m}^{M}\lambda d (\gamma \circ
E_{\lambda})](x),g(x)]_{X}=[F(x),g(x)]_{X} \) thus. Since by property (c) of the
quasi-product,
\begin{eqnarray*}
\left[\sum_{j=1}^{n}\lambda_{j}(\gamma \circ E_{\Delta_{j}})(x),g(x)\right]_{X}
\equiv
\sum_{j=1}^{n}\lambda_{j}\left[(\gamma \circ E_{\Delta_{j}})(x),g(x)\right]_{X}
\end{eqnarray*}
for \( x \in X \),
\begin{eqnarray*}
&&\int_{m}^{M}\lambda d w_{x}(\lambda) \\ &\equiv&
\lim_{n \rightarrow \infty} \sum_{j=1}^{n}\lambda_{j}\left[(\gamma \circ
E_{\Delta_{j}})(x),g(x)\right]_{X} \\ &\equiv& \lim_{n \rightarrow \infty}\left[
\sum_{j=1}^{n}\lambda_{j}(\gamma
\circ E_{\Delta_{j}})(x),g(x)\right]_{X} \\ &=&\left[F(x),g(x)\right]_{X}
\end{eqnarray*}
thus. \end{proof}

In the following, the spectral resolution of the generalized real definite
operator in terms of operator convergence is given. The key sufficient
condition for the operator convergence in this subsection and in Section 4.1 is
defined below.

\begin{definition}\label{def:8}

The uniform spectral representation condition on a unital Banach algebra \( X \)
is as follows: There exists a quasi-product
\\ (a)  which  is square bounded below and preserves the positivity  \\
or \\
(b)  has the left integral domain.

\end{definition}

\begin{theorem}\label{thm:9}

Let \( F \in B(X) \) be generalized real definite. If the uniform spectral
representation condition on \( X \) holds, then \( F \) has the spectral
representation
\begin{eqnarray*}
F=\int_{m}^{M}\lambda d (\gamma \circ E_{\lambda}),
\end{eqnarray*}
where \( [m,M] \) is some bounded interval depending on \( F \).

\end{theorem}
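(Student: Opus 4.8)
The plan is to identify $F$ with the integral operator supplied by Lemma~\ref{lem:8} and then to promote the scalar identity of Theorem~\ref{thm:8} to a genuine operator identity by invoking the uniform spectral representation condition.

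First I would apply Lemma~\ref{lem:8} to fix a bounded interval $[m,M]$ (depending on $F$) for which $\tilde{F}:=\int_{m}^{M}\lambda\,d(\gamma\circ E_{\lambda})$ exists in $B(X)$; by the construction in that proof, $\tilde{F}(x)=\lambda_{x}\gamma(x)$ for $x\neq 0$ and $\tilde{F}(0)=0$. Theorem~\ref{thm:8} then gives $[F(x),g(x)]_{X}=[\tilde{F}(x),g(x)]_{X}$ for every $x\in X$, so by property (c) of the quasi-product (with $n=2$, $\alpha_{1}=1$, $\alpha_{2}=-1$, after dividing by $c(g(x))>0$) we obtain $[(F-\tilde{F})(x),g(x)]_{X}=0$ for all $x$. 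Writing $H:=F-\tilde{F}$, the whole matter reduces to the implication: if $[H(x),g(x)]_{X}=0$ for all $x$, then $H=0$.

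If the quasi-product has a left integral domain, I am finished at once: for $x\neq 0$ we have $g(x)\neq 0$, so $[H(x),g(x)]_{X}=0$ forces $H(x)=0$ by Definition~\ref{def:3}, while $H(0)=F(0)-\tilde{F}(0)=0$ trivially. If instead the quasi-product is square bounded below and preserves the positivity, I would fix $x\neq 0$ and argue as follows. By property (c) again, $[-H(x),g(x)]_{X}=0$, so both $H(x)$ and $-H(x)$ have nonnegative quasi-product with $g(x)$. Preservation of positivity applied to the pair $(H(x),H(x))$ gives $[H(x)^{2},g(x)]_{X}\geq 0$, and applied to the pair $(H(x),-H(x))$ gives $[-H(x)^{2},g(x)]_{X}\geq 0$, i.e. $[H(x)^{2},g(x)]_{X}\leq 0$; hence $[H(x)^{2},g(x)]_{X}=0$. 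Since $[H(x),g(x)]_{X}=0\in R$, square-boundedness below applies with $y=H(x)$ and yields $\underline{k}\,\|H(x)\|_{X}^{2}\,\|g(x)\|_{X}\leq[H(x)^{2},g(x)]_{X}=0$ for some $\underline{k}>0$; as $\|g(x)\|_{X}>0$, this forces $H(x)=0$. Together with $H(0)=0$ this gives $F=\tilde{F}=\int_{m}^{M}\lambda\,d(\gamma\circ E_{\lambda})$ in both cases.

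I expect the main obstacle to be the square-bounded-below case, and specifically the trick of feeding the pair $(H(x),-H(x))$ — not merely $(H(x),H(x))$ — into the positivity-preservation hypothesis so as to trap $[H(x)^{2},g(x)]_{X}$ between $0$ and $0$; only then does the square-bounded-below inequality convert this into the norm estimate $\|H(x)\|_{X}=0$. This is where the two alternatives constituting the uniform spectral representation condition are each genuinely used; by contrast, the applicability of Lemma~\ref{lem:8} and Theorem~\ref{thm:8} to an arbitrary generalized real definite $F\in B(X)$ over a unital Banach algebra is unconditional and needs no further comment.
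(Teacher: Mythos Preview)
Your proof is correct. For alternative~(b) of the uniform spectral representation condition (left integral domain) your argument coincides with the paper's: both invoke Theorem~\ref{thm:8} to obtain $[(F-\tilde{F})(x),g(x)]_{X}=0$ and then read off $H(x)=0$ directly from Definition~\ref{def:3}.

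For alternative~(a), however, you take a genuinely different route. The paper does \emph{not} pass through the scalar identity of Theorem~\ref{thm:8}; instead it revisits the operator sandwich $0\leq F_{n}-F\leq \epsilon_{n}\sum_{j}\gamma\circ E_{\Delta_{j}}$ established there, squares it via Theorem~\ref{thm:7} to obtain $0\leq (F_{n}-F)^{2}\leq \epsilon_{n}^{2}\bigl(\sum_{j}\gamma\circ E_{\Delta_{j}}\bigr)^{2}$, and then invokes square-boundedness below to turn this into the norm estimate $\|F_{n}-F\|_{B(X)}\to 0$. Your argument is more elementary: having already secured $[H(x),g(x)]_{X}=0$, you feed the pair $(H(x),-H(x))$ into positivity preservation to force $[H(x)^{2},g(x)]_{X}=0$, and then a single application of square-boundedness at the point $y=H(x)$ finishes the job. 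This sidesteps Theorem~\ref{thm:7} entirely and works pointwise rather than through an operator-level squaring. The paper's route, on the other hand, yields the norm convergence $F_{n}\to F$ directly (rather than equality with the already-constructed limit $\tilde{F}$), at the cost of invoking the heavier machinery of Theorem~\ref{thm:7}.
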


\begin{proof} \( \int_{m}^{M}\lambda d (\gamma \circ E_{\lambda}) \) exists
by Lemma~\ref{lem:8}. As the quasi-product has the left integral domain, i.e., the uniform spectral representation condition
(b) satisfied, the equation \( [[\int_{m}^{M}\lambda
d (\gamma \circ E_{\lambda})-F](x),g(x)]_{X}=0 \) implies that \(
F(x)=[\int_{m}^{M}\lambda d (\gamma \circ E_{\lambda})](x) \) for every \( x
\in X \). Then by  Lemma~\ref{lem:8}, \( F= \int_{m}^{M}\lambda d (\gamma \circ E_{\lambda})  \).

Next, assume that the uniform spectral representation condition (a) holds.
Let \( F_{n}=\sum_{j=1}^{n} s_{j} (\gamma \circ E_{\Delta_{j}}) \).
Then
\begin{eqnarray*}
0 \leq F_{n}-F \leq \epsilon_{n} \sum_{j=1}^{n} \gamma \circ E_{\Delta_{j}}
\end{eqnarray*}
and thus
\begin{eqnarray*}
0 \leq \left(F_{n}-F\right)^{2} \leq \epsilon_{n}^{2}\left (\sum_{j=1}^{n}
\gamma \circ E_{\Delta_{j}}\right)^{2}
\end{eqnarray*}
by Theorem~\ref{thm:7}. Since the quasi-product is square bounded below, then there
exist positive numbers \( k_{1} \) and \( k_{2} \) such that for \( x \in X \),
\begin{eqnarray*}
&&
k_{1}\left|\left|F_{n}(x)-F(x)\right|\right|^{2}_{X}\left|\left|g(x)\right|\right|_{X}
\\ &\leq& \left[\left(F_{n}-F\right)^{2}(x),g(x)\right]_{X} \\ &\leq&
\left[\epsilon_{n}^{2} \left(\sum_{j=1}^{n} \gamma \circ
E_{\Delta_{j}}\right)^{2}(x),g(x)\right]_{X} \\ &\leq& k_{2}
\epsilon_{n}^{2}\left|\left|x\right|\right|^{2}_{X}\left|\left|g(x)\right|\right|_{X}.
\end{eqnarray*}
Hence \( \left|\left|F_{n}-F\right|\right|_{B(X)}
\mathop{\longrightarrow}\limits_ {n \rightarrow\infty} 0 \) holds.
\end{proof}

\begin{remark}\label{rem:5}

Theorem~\ref{thm:8} and Theorem~\ref{thm:9}  under only the uniform spectral representation
condition (b) also hold as
\( X \) is a Banach space, i.e., the condition for \( X \) being relaxed. The key is to
use \( F\circ E_{S} \) in place of \( F 1_{S} \) in the associated results, where
\( S \) containing 0 is a subset of \( X \).

\end{remark}

\section{Extensions}

In this section, the operational calculus of the generalized real definite operators is
given in the first subsection. Furthermore, the bounded generalized real definite
operators with the spectral representation in the sense of operator convergence
turns out to be associated with a class of operators defined in the second
subsection.

\subsection{Operational calculus}

In this subsection, the extensions of the spectral theorem in the previous
subsection to polynomial functions and continuous functions are stated in two
theorems, Theorem~\ref{thm:10} and Theorem~\ref{thm:12}, respectively. Let \( X \) be a unital
Banach algebra as in Section 3.3. The polynomial of the
generalized real definite operator is defined below. Note that the proofs of the lemmas in
this subsection are
delegated to the supplementary materials.

\begin{definition}\label{def:9}

 Let \( B(X) \) be a unital Banach algebra with
 the multiplication operation \( * \) given in Theorem~\ref{thm:5}.
Let \( p(\lambda)=\sum_{i=0}^{n}a_{i}\lambda^{i} \) be a polynomial in \(
\lambda \) with real coefficients \( a_{i} \), i.e., \( p \) being
over the real field. Then
\( p(F)=\sum_{i=0}^{n}a_{i}F^{*i} \), where \( F \in B(X) \)  and \( F^{*0}=e \).

\end{definition}

\begin{lemma}\label{lem:9}

Let \( F_{1n}, F_{1}, F_{2n}, F_{2} \in B(X) \). If
\( F_{1n} \mathop{\longrightarrow}\limits_
{n \rightarrow\infty} F_{1} \) and \( F_{2n} \mathop{\longrightarrow}\limits_ {n
\rightarrow\infty} F_{2} \) in the sense of operator convergence, then \( F_{1n} \ast F_{2n}
\mathop{\longrightarrow}\limits_ {n
\rightarrow\infty} F_{1} \ast F_{2} \) in the sense of operator
convergence.

\end{lemma}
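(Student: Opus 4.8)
The plan is to read the statement as the assertion that multiplication is jointly continuous in the normed algebra $(B(X),*)$, and then run the standard normed-algebra argument. By Theorem~\ref{thm:5}, $B(X)$ with the product $*$ is a normed algebra, so it is closed under $*$ and under the vector-space operations, $*$ is bilinear, and the norm is submultiplicative, i.e. $\|G_{1} * G_{2}\|_{B(X)} \le \|G_{1}\|_{B(X)} \|G_{2}\|_{B(X)}$ for all $G_{1}, G_{2} \in B(X)$. These are the only structural facts I will use.

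First I would split the difference by inserting a telescoping term, using bilinearity of $*$:
\begin{eqnarray*}
F_{1n} * F_{2n} - F_{1} * F_{2}
= F_{1n} * (F_{2n} - F_{2}) + (F_{1n} - F_{1}) * F_{2},
\end{eqnarray*}
where both summands again lie in $B(X)$ since $B(X)$ is a normed algebra. Applying $\|\cdot\|_{B(X)}$, the triangle inequality, and submultiplicativity gives
\begin{eqnarray*}
\|F_{1n} * F_{2n} - F_{1} * F_{2}\|_{B(X)}
\le \|F_{1n}\|_{B(X)} \|F_{2n} - F_{2}\|_{B(X)}
+ \|F_{1n} - F_{1}\|_{B(X)} \|F_{2}\|_{B(X)}.
\end{eqnarray*}

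Next I would note that the sequence $\{\|F_{1n}\|_{B(X)}\}$ is bounded: since $F_{1n} \rightarrow F_{1}$ in the sense of operator convergence, $\|F_{1n}\|_{B(X)} \le \|F_{1}\|_{B(X)} + \|F_{1n} - F_{1}\|_{B(X)}$ is at most $\|F_{1}\|_{B(X)} + 1$ for all large $n$, so $C := \sup_{n} \|F_{1n}\|_{B(X)} < \infty$. Substituting this bound, the right-hand side above is at most $C \|F_{2n} - F_{2}\|_{B(X)} + \|F_{1n} - F_{1}\|_{B(X)} \|F_{2}\|_{B(X)}$, and both terms tend to $0$ as $n \rightarrow \infty$ by the hypotheses $F_{2n} \rightarrow F_{2}$ and $F_{1n} \rightarrow F_{1}$. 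Hence $\|F_{1n} * F_{2n} - F_{1} * F_{2}\|_{B(X)} \rightarrow 0$, which is precisely operator convergence of $F_{1n} * F_{2n}$ to $F_{1} * F_{2}$. There is no genuine obstacle in this argument; the only point to keep in view is that submultiplicativity of $\|\cdot\|_{B(X)}$ with respect to $*$ (equivalently, that $*$ really does make $B(X)$ a normed algebra) is supplied by Theorem~\ref{thm:5}, and this is exactly what makes the proof identical in form to the classical Banach-algebra case.
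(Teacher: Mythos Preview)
Your proof is correct. The paper does not include a proof of this lemma in the main text (it is delegated to the supplementary materials), but your argument is the standard joint-continuity-of-multiplication computation in a normed algebra, and it goes through verbatim here because Theorem~\ref{thm:5} supplies exactly the submultiplicativity $\|G_{1}*G_{2}\|_{B(X)}\le\|G_{1}\|_{B(X)}\|G_{2}\|_{B(X)}$ and bilinearity of $*$ that you use; there is no reason to expect the supplementary proof to differ in any essential way.
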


\begin{theorem}\label{thm:10}

Let \( F \in B(X) \) be generalized real definite. Then \( p(F) \) has the spectral
representation
\begin{eqnarray*}
p(F)=\int_{m}^{M}p(\lambda \gamma) d 1_{\lambda}
\end{eqnarray*}
if the uniform spectral
representation condition on \( X \) holds.

\end{theorem}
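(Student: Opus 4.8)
The plan is to identify, for each partition, the corresponding Riemann--Stieltjes-type partial sum for $\int_{m}^{M}p(\lambda\gamma)\,d1_{\lambda}$ with $p$ evaluated (in the $\ast$-algebra $B(X)$) at the matching approximant of the spectral resolution of $F$, and then to pass to the limit using continuity of $\ast$-multiplication. Fix an interval $[m,M]$ and a partition $m=s_{0}<s_{1}<\cdots<s_{n}=M$ as in Lemma~\ref{lem:8} (so $1_{m}(x)=0$ for $x\neq0$ and $1_{M}=1_{X}$), with mesh $<\epsilon_{n}\to0$, and choose $\lambda_{j}\in(s_{j-1},s_{j}]$. Put $F_{n}=\sum_{j=1}^{n}\lambda_{j}(\gamma\circ E_{\Delta_{j}})$ and $G_{n}=p(\lambda_{1}\gamma)1_{s_{1}}+\sum_{j=2}^{n}p(\lambda_{j}\gamma)1_{\Delta_{j}}$. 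Under the uniform spectral representation condition, Theorem~\ref{thm:9} gives $F=\int_{m}^{M}\lambda\,d(\gamma\circ E_{\lambda})$, hence $F_{n}\to F$ in $\|\cdot\|_{B(X)}$, the convergence being independent of the choice of the $\lambda_{j}$; and $p(F)\in B(X)$ since $B(X)$ is a Banach algebra under $\ast$ with unit $e$ (Theorem~\ref{thm:5}).

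The heart of the argument is the pointwise identity $G_{n}=p(F_{n})$. Since the ranges $N(F_{s_{j}}^{+})$ increase with $j$ (Lemma~\ref{lem:3}(b), Lemma~\ref{lem:7}(a)), the $\{0,1_{X}\}$-valued indicators $1_{\Delta_{j}}=1_{s_{j}}-1_{s_{j-1}}$ are idempotent (Lemma~\ref{lem:3}(c)) with mutually disjoint ranges, so $1_{\Delta_{j}}1_{\Delta_{k}}=0$ for $j\neq k$; moreover $(\gamma\circ E_{\Delta_{j}})(x)=1_{\Delta_{j}}(x)\gamma(x)$ for every $x$, and $\gamma(0)=0$ (from $\|\gamma(x)\|_{X}=k_{2}(x)\|x\|_{X}$). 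Because $1_{\Delta_{j}}$ takes central values, these yield the orthogonality $(\gamma\circ E_{\Delta_{j}})\ast(\gamma\circ E_{\Delta_{k}})=0$ for $j\neq k$, so the multinomial expansion of the $\ast$-power collapses to $F_{n}^{\ast i}=\sum_{j}\lambda_{j}^{\,i}(\gamma\circ E_{\Delta_{j}})^{\ast i}$ with $(\gamma\circ E_{\Delta_{j}})^{\ast i}=1_{\Delta_{j}}\gamma^{\ast i}$ for $i\geq1$. Feeding in $(\lambda_{j}\gamma)^{\ast i}=\lambda_{j}^{\,i}\gamma^{\ast i}$ and $\sum_{j}1_{\Delta_{j}}=1_{X}-1_{m}$ and regrouping $p(F_{n})=a_{0}e+\sum_{i\geq1}a_{i}F_{n}^{\ast i}$, one should obtain
\begin{eqnarray*}
p(F_{n})=a_{0}1_{m}+\sum_{j=1}^{n}p(\lambda_{j}\gamma)\,1_{\Delta_{j}},
\end{eqnarray*}
and since $1_{s_{1}}=1_{\Delta_{1}}+1_{m}$ while $p(\lambda_{1}\gamma)(0)=a_{0}1_{X}$ (because $\gamma(0)=0$ kills the higher powers), the boundary term of $G_{n}$ supplies exactly $p(\lambda_{1}\gamma)1_{s_{1}}=p(\lambda_{1}\gamma)1_{\Delta_{1}}+a_{0}1_{m}$, whence $G_{n}=p(F_{n})$. (Here $1_{m}$ is the operator vanishing off $0$ with $1_{m}(0)=1_{X}$; the choice of $1_{s_{1}}$ rather than $1_{\Delta_{1}}$ in Definition~\ref{def:7} is precisely what repairs the discrepancy at the origin.)

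To conclude, pass to the limit: by Lemma~\ref{lem:9} and induction on $i$ one has $F_{n}^{\ast i}\to F^{\ast i}$ in $\|\cdot\|_{B(X)}$ for every $i$, hence $p(F_{n})\to p(F)$ in $B(X)$ as a finite linear combination of convergent sequences. Therefore $G_{n}=p(F_{n})\to p(F)$, and since $F_{n}\to F$ regardless of the choice of the $\lambda_{j}$, so is the convergence $G_{n}\to p(F)$; by Definition~\ref{def:7} this says precisely $\int_{m}^{M}p(\lambda\gamma)\,d1_{\lambda}=p(F)$.

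The step I expect to be the main obstacle is the middle one: proving the orthogonality $(\gamma\circ E_{\Delta_{j}})\ast(\gamma\circ E_{\Delta_{k}})=0$ (which hinges on the nesting of the sets $N(F_{s_{j}}^{+})$ and on $1_{\Delta_{j}}$ being $\{0,1_{X}\}$-valued, so that non-commutativity of $X$ is harmless) and then tracking the $\ast$-powers of these mutually orthogonal pieces, together with the delicate bookkeeping at the origin needed to make $p(F_{n})$ and $G_{n}$ agree there as well as away from $0$. Everything else is either quoted (Theorem~\ref{thm:9} for the resolution of $F$, Lemma~\ref{lem:9} for continuity of $\ast$, and the projection lemmas) or routine computation.
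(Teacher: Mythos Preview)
Your proposal is correct and follows essentially the same route as the paper's own proof: both establish the orthogonality $1_{\Delta_j}1_{\Delta_k}=0$ for $j\neq k$ via Lemma~\ref{lem:3}(b), use it to collapse $F_n^{\ast i}$ to $\sum_j(\lambda_j\gamma)^{\ast i}1_{\Delta_j}$, identify the resulting sum with the partial sum $G_n$, and then pass to the limit by Theorem~\ref{thm:9} and Lemma~\ref{lem:9}. Your treatment of the behaviour at the origin (the $a_0 1_m$ term and the reason $1_{s_1}$ replaces $1_{\Delta_1}$ in the first summand) is more explicit than the paper's, which simply asserts the final identity $p(\lambda_1\gamma)1_{s_1}+\sum_{j\ge 2}p(\lambda_j\gamma)1_{\Delta_j}=p(F_n)$; but the substance is the same.
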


\begin{proof} Since for \( k > j \),
\begin{eqnarray*}
1_{\Delta_{j}}1_{\Delta_{k}}=\left(1_{s_{j}}-1_{s_{j-1}}\right)\left(1_{s_{k}}-
1_{s_{k-1}}\right)= 1_{s_{j}}-1_{s_{j}}-1_{s_{j-1}}+1_{s_{j-1}}=0
\end{eqnarray*}
by Lemma~\ref{lem:3} (b) and thus
\( (\gamma \circ E_{\Delta_{j}})(\gamma \circ E_{\Delta_{k}})=
\gamma^{2}1_{\Delta_{j}}1_{\Delta_{k}}=0 \).
Therefore,
\begin{eqnarray*}
\left[\sum_{j=1}^{n} \lambda_{j} (\gamma \circ E_{\Delta_{j}})\right]^{\ast k}
=\sum_{j=1}^{n} \lambda^{k}_{j} (\gamma \circ E_{\Delta_{j}})^{\ast k}
=\sum_{j=1}^{n} (\lambda_{j}\gamma)^{\ast k}1_{\Delta_{j}},
\end{eqnarray*}
where \( k \) is a nonnegative integer. By the condition imposed on the
quasi-product and then using Theorem~\ref{thm:9},
\( \sum_{j=1}^{n} \lambda_{j} (\gamma \circ E_{\Delta_{j}})
\mathop{\longrightarrow}\limits_
{n \rightarrow\infty} F \) in the sense of operator
convergence. By Lemma~\ref{lem:9},
\begin{eqnarray*}
\sum_{j=1}^{n} (\lambda_{j}\gamma)^{\ast k}1_{\Delta_{j}}=
\left[\sum_{j=1}^{n} \lambda_{j} (\gamma \circ E_{\Delta_{j}})\right]^{\ast k}
\mathop{\longrightarrow}\limits_
{n \rightarrow\infty} F^{\ast k}
\end{eqnarray*}
and hence
\begin{eqnarray*}
p( \lambda_{1} \gamma) 1_{s_{1}}+\sum_{j=2}^{n} p( \lambda_{j} \gamma)
1_{\Delta_{j}}= p\left[\sum_{j=1}^{n} \lambda_{j} (\gamma \circ
E_{\Delta_{j}})\right]
\mathop{\longrightarrow}\limits_
{n \rightarrow\infty} p(F),
\end{eqnarray*}
i.e., \( p(F)=\int_{m}^{M}p(\lambda \gamma) d 1_{\lambda} \) in the sense of
operator convergence. \end{proof}

For a bounded linear self-adjoint operator \( T \), the normed values of
\( p(T) \) is not greater than the normed value of  \( p(\lambda) \), where
\( p(\lambda) \) is considered as an element in the space of all continuous functions
defined on some compact interval. The following lemma can be considered as
the counterpart of the one for the bounded linear self-adjoint operator. Let \(
C([m,M] \) be the Banach space of all real-valued continuous functions defined on \(
[m,M]
\) with the supremum norm.

\begin{lemma}\label{lem:10}

Let \( F \in B(X) \) be generalized real definite. If the uniform spectral
representation condition on \( X \) holds, then there exist a positive number \(
\overline{k} \) and a bounded interval \( [m,M] \) depending on
\( F \) such that for any polynomial
function \( p \) with real coefficients,
\begin{eqnarray*}
\left|\left|p(F)\right|\right|_{B(X)} \leq \overline{k}
\max_{\lambda \in [m,M]}\left|p(\lambda)\right|=\overline{k}
\left|\left|p\right|\right|_{C([m,M])}.
\end{eqnarray*}

\end{lemma}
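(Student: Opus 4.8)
The plan is to bound $\|p(F)\|_{B(X)}$ by going through the spectral representation $p(F)=\int_m^M p(\lambda\gamma)\,d1_\lambda$ established in Theorem~\ref{thm:10}, evaluating at an arbitrary $x\in X$, and controlling the finite Riemann--Stieltjes sums uniformly. First I would recall from the proof of Lemma~\ref{lem:8} that for fixed $x\neq0$ there is a single $\lambda_x\in[m,M]$ with $1_{\lambda_x}(x)=1$ and $1_\lambda(x)=0$ for $\lambda<\lambda_x$; hence for any partition $\{s_j\}$ with $\lambda_x\in(s_{j-1},s_j]$ we have $1_{\Delta_k}(x)=0$ for $k\neq j$ and $1_{\Delta_j}(x)=1$ (with the convention for the first subinterval). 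Consequently the sum $p(\lambda_1\gamma)1_{s_1}(x)+\sum_{k=2}^n p(\lambda_k\gamma)1_{\Delta_k}(x)$ collapses to the single term $p(\lambda_j\gamma)(x)$, i.e. $p(\lambda_j)\gamma(x)$ after expanding the $\ast$-powers as in the proof of Theorem~\ref{thm:10} (recall $(\lambda\gamma)^{\ast k}1_{\Delta}$ contributes $\lambda^k\gamma$ on the relevant set because $\gamma^{\ast k}$ applied to $x$ is $\gamma(x)$ up to the norm factor built into $\ast$, and $\gamma$ satisfies $\|\gamma(x)\|_X=k_2(x)\|x\|_X$ from Definition~\ref{def:6}).

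Next I would pass to the limit $n\to\infty$: by Theorem~\ref{thm:10} the sums converge to $p(F)$ in $B(X)$, hence pointwise, so $p(F)(x)=p(\lambda_x)\gamma(x)$ for $x\neq0$, and $p(F)(0)=a_0 e(0)=0$ since $\gamma(0)$ and the construction give the zero element (one should check the $x=0$ case separately using $F(0)=0$ and the definition of $e$). Then
\begin{eqnarray*}
\|p(F)(x)\|_X=|p(\lambda_x)|\,\|\gamma(x)\|_X=|p(\lambda_x)|\,k_2(x)\|x\|_X
\leq \overline{k_2}\,\Big(\max_{\lambda\in[m,M]}|p(\lambda)|\Big)\|x\|_X
\end{eqnarray*}
for $x\neq0$, where $\overline{k_2}$ is the finite upper bound for the bounded function $k_2$. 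Taking the supremum over $x\neq0$ and recalling $p(F)(0)=0$ yields $\|p(F)\|_{B(X)}\leq\overline{k_2}\max_{\lambda\in[m,M]}|p(\lambda)|=\overline{k_2}\|p\|_{C([m,M])}$, which is the claim with $\overline{k}=\overline{k_2}$ (note $\overline{k}$ depends only on $\gamma$, hence only on $F$, not on $p$).

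**Main obstacle.** The delicate point is justifying the pointwise identity $p(F)(x)=p(\lambda_x)\gamma(x)$ rigorously, because the $\ast$-multiplication in $B(X)$ carries the factor $1/\|x\|_X$, so one must track how $(\gamma\circ E_{\Delta_j})^{\ast k}$ acts on a specific $x$: on the set where $1_{\Delta_j}(x)=1$ one gets $\gamma^{\ast k}1_{\Delta_j}$ applied to $x$, and $\gamma^{\ast k}(x)=\gamma(x)^k/\|x\|_X^{\,k-1}$, which combined with $\lambda_j^k$ and then summed against the coefficients $a_i$ must be shown to equal $p(\lambda_j)\gamma(x)$ exactly — this uses $\gamma(x)$ being a genuine algebra element and the identity $\gamma(x)^k/\|x\|_X^{k-1}$ having norm $k_2(x)^k\|x\|_X$, so the clean bound survives. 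The other routine-but-necessary checks are the $x=0$ case and confirming that the interval $[m,M]$ from Lemma~\ref{lem:8} (equivalently Theorem~\ref{thm:9}) is the same one in which $\lambda_x$ always lies, which is exactly what the construction in the proof of Lemma~\ref{lem:8} provides.
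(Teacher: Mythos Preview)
The pointwise collapse to a single partition cell is correct, and so is the limit identification $p(F)(x)=[p(\lambda_x\gamma)](x)$ for $x\neq0$. The error is the next step, where you identify this with $p(\lambda_x)\gamma(x)$. Expanding the $*$-product, one has $e(x)=\|x\|_X\cdot 1$ and $(\lambda\gamma)^{*i}(x)=\lambda^i\gamma(x)^i/\|x\|_X^{\,i-1}$, so with $u(x)=\gamma(x)/\|x\|_X$,
\begin{eqnarray*}
p(F)(x)=\|x\|_X\Bigl(a_0\cdot 1+\sum_{i\ge1}a_i\lambda_x^{\,i}u(x)^i\Bigr),
\end{eqnarray*}
which is the polynomial evaluated at the \emph{algebra element} $\lambda_x u(x)\in X$, not at the scalar $\lambda_x$. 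Your norm identity $\|p(F)(x)\|_X=|p(\lambda_x)|\,k_2(x)\|x\|_X$ would require $\bigl\|\sum_i a_i(\lambda_x u(x))^i\bigr\|_X=|p(\lambda_x)|\,k_2(x)$, which has no reason to hold in a general unital Banach algebra. Already for $p(\lambda)=\lambda^n$ the left side is $|\lambda_x|^n\|u(x)^n\|_X$, potentially of order $(|\lambda_x|\,\overline{k_2})^n$, so the ratio to $\|p\|_{C([m,M])}=\max(|m|,|M|)^n$ is of order $\overline{k_2}^{\,n}$ and blows up in $n$ whenever $k_2$ is not bounded by $1$. Cancellation is also lost: for $p(\lambda)=(1-\lambda)^n$ on $[0,1]$ one has $\|p\|_{C([0,1])}=1$, yet any triangle-inequality estimate on the sum gives only $2^n$. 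The ``clean bound'' does not survive.

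The paper delegates this proof to the supplement, so a line-by-line comparison is impossible, but a purely pointwise norm computation along your lines cannot produce a constant $\overline{k}$ independent of the degree of $p$. What is missing from your argument is any use of the \emph{uniform spectral representation condition} itself. The natural route is through the order structure: from $\|p\|_{C([m,M])}\pm p(\lambda)\ge0$ on $[m,M]$ one obtains, via the Riemann sums of Theorem~\ref{thm:10} and Lemma~\ref{lem:1}, the g-positivity $\|p\|_{C([m,M])}\,e\pm p(F)\ge0$; multiplying these two via Theorem~\ref{thm:7} (this is where ``preserves positivity'' enters) yields $\|p\|_{C([m,M])}^2\,e-p(F)^{*2}\ge0$, and then the square-bounded-below property of the quasi-product converts this into $\underline{k}\,\|p(F)(x)\|_X^2\|g(x)\|_X\le[p(F)^{*2}(x),g(x)]_X\le \overline{c}\,\|p\|_{C([m,M])}^2\|x\|_X^2\|g(x)\|_X$, hence the desired norm bound. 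Your proposal never invokes either half of Definition~\ref{def:8}, which is the tell that the hypothesis is going unused.
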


For any \( f \in C([m,M]) \), the operator \( f(F) \) and its spectral theorem can
be defined and established based on Lemma~\ref{lem:10}. The following theorem gives
the existence of the limit operator of a sequence of polynomials of the
generalized real definite operator.

\begin{theorem}\label{thm:11}

Let \( F
\in B(X) \) be generalized real definite. If the uniform spectral representation
condition on \( X \) holds, then there exists a bounded interval \( [m,M] \)
depending on \( F \) such that \( \{p_{n}(F)\} \) converges to an operator in \(
B(X) \) in the norm \( ||\cdot||_{B(X)} \), where
\( \{p_{n}\} \) is any convergent sequence of
polynomial functions with real coefficients in the space \( C([m,M]) \).

\end{theorem}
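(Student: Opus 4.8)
The plan is to show that $\{p_n(F)\}$ is a Cauchy sequence in the Banach space $B(X)$, which by Theorem~\ref{thm:3} (applied with $Y=X$ a Banach space) is complete, so that the limit exists in $B(X)$. First I would fix the bounded interval $[m,M]$ depending on $F$ supplied by Lemma~\ref{lem:10} (the same interval that appears in Lemma~\ref{lem:8} and Theorem~\ref{thm:9}), together with the positive constant $\overline{k}$ from that lemma. Since $\{p_n\}$ converges in $C([m,M])$, it is in particular Cauchy there, i.e.\ for every $\varepsilon>0$ there is an $N$ such that $\|p_n-p_\ell\|_{C([m,M])}<\varepsilon$ for all $n,\ell\geq N$.

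The key observation is that for two polynomials with real coefficients, $p_n-p_\ell$ is again a polynomial with real coefficients, and by Definition~\ref{def:9} one has $p_n(F)-p_\ell(F)=(p_n-p_\ell)(F)$ as elements of the Banach algebra $B(X)$ with multiplication $*$ (linearity of $F\mapsto p(F)$ in the coefficients of $p$ is immediate from the definition $p(F)=\sum_i a_i F^{*i}$). Therefore I would apply Lemma~\ref{lem:10} to the polynomial $p_n-p_\ell$:
\begin{eqnarray*}
\left\|p_n(F)-p_\ell(F)\right\|_{B(X)}
=\left\|(p_n-p_\ell)(F)\right\|_{B(X)}
\leq \overline{k}\,\left\|p_n-p_\ell\right\|_{C([m,M])}
< \overline{k}\,\varepsilon
\end{eqnarray*}
for all $n,\ell\geq N$. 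This shows $\{p_n(F)\}$ is Cauchy in $B(X)$, and since $X$ is a Banach space, Theorem~\ref{thm:3} gives that $B(X)$ is a Banach space, so $\{p_n(F)\}$ converges to some operator in $B(X)$ in the norm $\|\cdot\|_{B(X)}$.

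The only delicate point is making sure Lemma~\ref{lem:10} applies uniformly: the constant $\overline{k}$ and the interval $[m,M]$ there are produced once and for all from $F$ under the uniform spectral representation condition, and crucially do not depend on which polynomial is fed in — this is exactly what the statement of Lemma~\ref{lem:10} asserts, so no uniformity issue arises. The other step worth a line is the identity $p_n(F)-p_\ell(F)=(p_n-p_\ell)(F)$, which I would justify by noting that $F^{*i}$ is a fixed element of $B(X)$ for each $i$ and the assignment $p\mapsto p(F)$ is $\mathbb{R}$-linear on coefficient vectors; this is routine from Definition~\ref{def:9}. I expect the main (mild) obstacle to be purely bookkeeping: verifying that the interval $[m,M]$ can be chosen independently of $n$, which is handled by invoking Lemma~\ref{lem:10} for $F$ alone before introducing the sequence $\{p_n\}$.
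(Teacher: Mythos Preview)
Your proposal is correct and follows essentially the same approach as the paper: apply Lemma~\ref{lem:10} to the polynomial $p_n-p_\ell$ to obtain the Cauchy estimate $\|p_n(F)-p_\ell(F)\|_{B(X)}\leq\overline{k}\,\|p_n-p_\ell\|_{C([m,M])}$, and then conclude by completeness of $B(X)$. Your write-up is in fact more detailed than the paper's (you explicitly justify $(p_n-p_\ell)(F)=p_n(F)-p_\ell(F)$ and invoke Theorem~\ref{thm:3} for completeness), but the argument is the same.
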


\begin{proof} By Lemma~\ref{lem:10}, for any \( \epsilon > 0 \), there exist a
positive number \( N \)
 such that for \( n, l > N \),
\begin{eqnarray*}
\left|\left|p_{n}(F)-p_{l}(F)\right|\right|_{B(X)} \leq
\overline{k}\left|\left|p_{n}-p_{l}\right|\right|_{C([m,M])} <  \epsilon
\end{eqnarray*}
and thus \( \{p_{n}(F)\} \) is a Cauchy sequence in the Banach space \( B(X) \),
where
 \( \overline{k} \) is some positive number.
\end{proof}

According to the above theorem, the operator corresponding to any \( f
\in C([m,M]) \) and the generalized real definite operator \( F \in B(X) \) can be
defined as follows.

\begin{definition}\label{def:10}

Let \( B(X) \) be a unital Banach algebra. If there exists a sequence of
polynomial functions \( \{p_{n}\} \) defined on
\( [m,M] \) over the real field converging uniformly to \( f \) in \( C([m,M]) \), i.e.,
the convergence being in the norm \( ||\cdot||_{C([m,M])} \),
and the limit of the sequence of operators \(
\{p_{n}(F)\}
\) in the norm \( ||\cdot||_{B(X)}
\) corresponding to  the generalized real definite operator
\( F \in B(X) \) exists, then the limit in \( B(X) \) is denoted by \( f(F) \).

\end{definition}

As the uniform spectral representation condition on
\( X \) holds, \( f(F) \) exists and is unique, i.e., \( f(F) \) being
the limit corresponding to any sequence of polynomial functions converging to
\( f \) in \( C([m,M]) \), as indicated by the corollary below.

\begin{corollary}\label{cor:6}

Let \( F \in B(X) \) be generalized real definite. If the uniform spectral
representation condition on
\( X \) holds, then \( f(F) \) exists for any \( f \in C([m,M]) \)
and is independent of the choice of the sequence of
polynomial functions in
\( C([m,M]) \), i.e.,
\( f(F) \) being the limit of any sequence of operators \( \{p_{n}(F)\} \) satisfying
that \( \{p_{n}\} \) defined on
\( [m,M] \) over the real field converges uniformly to \( f \) in \( C([m,M]) \),
where \( [m,M] \) is some bounded interval depending on
\( F \).

\end{corollary}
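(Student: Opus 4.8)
The plan is to get the existence of $f(F)$ from Theorem~\ref{thm:11} together with the classical Weierstrass approximation theorem, and to get the independence of the approximating sequence from the norm estimate in Lemma~\ref{lem:10} combined with the fact that $p \mapsto p(F)$ is linear in the coefficients (Definition~\ref{def:9}). Throughout, one works with the single bounded interval $[m,M]$ furnished by Lemma~\ref{lem:10} (equivalently Theorem~\ref{thm:11}), which depends only on $F$; this is the point that makes all the sup-norms $\|\cdot\|_{C([m,M])}$ below refer to one fixed interval.

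For existence, given $f \in C([m,M])$ the Weierstrass approximation theorem provides a sequence $\{p_{n}\}$ of polynomials with real coefficients such that $\|p_{n}-f\|_{C([m,M])} \to 0$; in particular $\{p_{n}\}$ is a convergent, hence Cauchy, sequence in $C([m,M])$. Under the uniform spectral representation condition on $X$, Theorem~\ref{thm:11} then guarantees that $\{p_{n}(F)\}$ converges to an operator in the Banach space $[B(X),\|\cdot\|_{B(X)}]$, and by Definition~\ref{def:10} this limit is precisely $f(F)$. Thus $f(F)$ exists.

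For independence of the choice, let $\{p_{n}\}$ and $\{q_{n}\}$ be two sequences of real-coefficient polynomials on $[m,M]$, each converging uniformly to $f$. A triangle inequality gives $\|p_{n}-q_{n}\|_{C([m,M])} \le \|p_{n}-f\|_{C([m,M])} + \|f-q_{n}\|_{C([m,M])} \to 0$. Since $p \mapsto p(F)$ is linear (Definition~\ref{def:9}) and $B(X)$ is a normed algebra, one has $p_{n}(F)-q_{n}(F) = (p_{n}-q_{n})(F)$, so Lemma~\ref{lem:10} yields $\|p_{n}(F)-q_{n}(F)\|_{B(X)} \le \overline{k}\,\|p_{n}-q_{n}\|_{C([m,M])} \to 0$. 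As each of $\{p_{n}(F)\}$ and $\{q_{n}(F)\}$ converges in $B(X)$ by the existence part, their limits must coincide, so $f(F)$ does not depend on the approximating sequence.

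I do not expect a genuine obstacle here; the proof is a short assembly of Theorem~\ref{thm:11}, Lemma~\ref{lem:10}, and linearity. The only item needing a word of care is verifying that the interval $[m,M]$ is one and the same for $F$, for all the polynomials, and for $f$ — which is exactly what the clause ``$[m,M]$ is some bounded interval depending on $F$'' in Lemma~\ref{lem:10} and Theorem~\ref{thm:11} secures — so that the Cauchy estimate and the norm bound can be applied simultaneously to both sequences.
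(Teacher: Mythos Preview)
Your proof is correct and follows essentially the same route as the paper's own argument: existence via Weierstrass plus Theorem~\ref{thm:11}, and independence via the bound $\|p_{n}(F)-q_{n}(F)\|_{B(X)}\le\overline{k}\,\|p_{n}-q_{n}\|_{C([m,M])}$ from Lemma~\ref{lem:10} together with a triangle-inequality argument. Your explicit remark that $p\mapsto p(F)$ is linear (so $p_{n}(F)-q_{n}(F)=(p_{n}-q_{n})(F)$) and your care about using a single interval $[m,M]$ are both appropriate and match what the paper uses implicitly.
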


\begin{proof} Because there exists a sequence of polynomial functions converging
uniformly to \( f \) in \( C([m,M]) \) by Weierstrass theorem, \( f(F) \) exists by
Theorem~\ref{thm:11} and Definition~\ref{def:10}. Next, let \( \{p_{n}\} \) and \( \{p_{n}^{\ast}\} \)
be the sequences of polynomial functions both converging uniformly to \( f \)
in \( C([m,M]) \). Then \( p_{n}(F)
\mathop{\longrightarrow}\limits_ {n \rightarrow\infty} f(F) \) and \(
p^{\ast}_{n}(F)
\mathop{\longrightarrow}\limits_ {n \rightarrow\infty} f^{\ast}(F) \) by Theorem~\ref{thm:11}.
Furthermore, because
\begin{eqnarray*}
&&\left|\left|f(F)-f^{\ast}(F)\right|\right|_{B(X)} \\ &\leq&
\left|\left|f(F)-p_{n}(F)\right|\right|_{B(X)} +\left|\left|p_{n}(F)-p_{n}^{\ast}(F)
\right|\right|_{B(X)} +
\left|\left|p_{n}^{\ast}(F)-f^{\ast}(F)\right|\right|_{B(X)}
\end{eqnarray*}
and
\begin{eqnarray*}
\left|\left|p_{n}(F)-p^{\ast}_{n}(F)\right|\right|_{B(X)} \leq
\overline{k}\left|\left|p_{n}-p^{\ast}_{n}\right|\right|_{C([m,M])},
\end{eqnarray*}
then letting \( n \rightarrow \infty \) gives \( f(F)=f^{\ast}(F) \) .
\end{proof}

The following lemma is an extension of Lemma~\ref{lem:10} to the continuous functions
and can be used to prove the spectral theorem corresponding to the
continuous functions.

\begin{lemma}\label{lem:11}

 Let \( F \in B(X) \) be generalized real definite. If the uniform spectral
 representation condition on \( X \) holds, then
there exist a positive number \( \overline{k} \) and a bounded interval \( [m,M]
\) depending on
\( F \) such that for any \( f \in C([m,M]) \),
\begin{eqnarray*}
\left|\left|f(F)\right|\right|_{B(X)} \leq \overline{k}
\max_{\lambda \in [m,M]}\left|f(\lambda)\right|=  \overline{k}
\left|\left|f\right|\right|_{C([m,M])}.
\end{eqnarray*}

\end{lemma}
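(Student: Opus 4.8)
The plan is to obtain the bound for $f(F)$ by passing to the limit in the corresponding bound for polynomials, which is Lemma~\ref{lem:10}. First I would invoke Corollary~\ref{cor:6}: since the uniform spectral representation condition on $X$ holds, for any $f \in C([m,M])$ the operator $f(F) \in B(X)$ exists and equals the limit, in the norm $\|\cdot\|_{B(X)}$, of $p_n(F)$ for any sequence $\{p_n\}$ of real polynomial functions on $[m,M]$ converging uniformly to $f$; the existence of such a sequence is guaranteed by the Weierstrass approximation theorem. Fix one such sequence $\{p_n\}$. By Lemma~\ref{lem:10} there is a positive number $\overline{k}$ and a bounded interval $[m,M]$, both depending only on $F$ (not on the polynomial), such that
\begin{eqnarray*}
\left\|p_n(F)\right\|_{B(X)} \leq \overline{k} \left\|p_n\right\|_{C([m,M])}
\end{eqnarray*}
for every $n$.

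Next I would let $n \to \infty$ on both sides. On the left, $p_n(F) \to f(F)$ in $\|\cdot\|_{B(X)}$ by Corollary~\ref{cor:6}, so $\|p_n(F)\|_{B(X)} \to \|f(F)\|_{B(X)}$ by continuity of the norm. On the right, $p_n \to f$ uniformly on $[m,M]$ means $\|p_n\|_{C([m,M])} \to \|f\|_{C([m,M])}$, again by continuity of the supremum norm. Hence
\begin{eqnarray*}
\left\|f(F)\right\|_{B(X)} = \lim_{n \rightarrow \infty} \left\|p_n(F)\right\|_{B(X)}
\leq \overline{k} \lim_{n \rightarrow \infty} \left\|p_n\right\|_{C([m,M])}
= \overline{k} \left\|f\right\|_{C([m,M])} = \overline{k} \max_{\lambda \in [m,M]} \left|f(\lambda)\right|,
\end{eqnarray*}
which is the desired inequality.

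The only point requiring care — and the main (though mild) obstacle — is making sure the constant $\overline{k}$ and the interval $[m,M]$ produced by Lemma~\ref{lem:10} are genuinely uniform over all real polynomials, so that they do not degrade in the limit; this is exactly what the statement of Lemma~\ref{lem:10} asserts, so no extra work is needed there. A secondary bookkeeping point is that $f(F)$ is well-defined independently of the approximating sequence, which is precisely Corollary~\ref{cor:6}; this is what lets us choose the convenient Weierstrass sequence without loss of generality. With these two inputs in hand, the argument is a routine limiting argument and nothing deeper is involved.
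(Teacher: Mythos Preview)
Your proposal is correct and follows exactly the route the paper sets up: the paper delegates the proof of Lemma~\ref{lem:11} to its supplementary materials, but the ordering of results (Lemma~\ref{lem:10} for polynomials, then Corollary~\ref{cor:6} for the well-definedness of \(f(F)\), then Lemma~\ref{lem:11}) makes clear that the intended argument is precisely the limiting passage you describe. There is no gap; the uniformity of \(\overline{k}\) and \([m,M]\) over all polynomials is explicit in the statement of Lemma~\ref{lem:10}, and the rest is the continuity of the norm.
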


\begin{theorem}\label{thm:12}

If the uniform spectral representation condition on \( X \) holds, then for a
generalized real definite operator \( F
\in B(X) \) and any \( f \in C([m,M]) \) , \( f(F) \) has the spectral representation
\begin{eqnarray*}
f(F)=\int_{m}^{M}f(\lambda \gamma) d 1_{\lambda},
\end{eqnarray*}
 where \( [m,M] \) is some bounded interval depending on \( F \).

\end{theorem}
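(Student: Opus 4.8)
The plan is to mimic the proof of Theorem~\ref{thm:10}, replacing the polynomial $p$ by an arbitrary $f \in C([m,M])$ and passing to a limit using the uniform approximation guaranteed by Corollary~\ref{cor:6} together with the norm estimates of Lemma~\ref{lem:10} and Lemma~\ref{lem:11}. First I would fix a sequence $\{p_n\}$ of polynomial functions with real coefficients converging uniformly to $f$ in $C([m,M])$ (Weierstrass theorem), so that by Corollary~\ref{cor:6} we have $p_n(F) \to f(F)$ in $\|\cdot\|_{B(X)}$, the limit being independent of the chosen sequence. By Theorem~\ref{thm:10}, for each $n$ we already know $p_n(F) = \int_{m}^{M} p_n(\lambda\gamma)\, d1_{\lambda}$ in the sense of operator convergence, i.e. $p_n(\lambda_1\gamma)1_{s_1} + \sum_{j=2}^{N} p_n(\lambda_j\gamma)1_{\Delta_j} \to p_n(F)$ as $N \to \infty$, independently of the choice of $\lambda_j \in (s_{j-1},s_j]$.

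Next I would show that $\int_{m}^{M} f(\lambda\gamma)\, d1_{\lambda}$ exists in the sense of operator convergence. For a fixed partition $\{s_j\}$ with mesh $<\epsilon_N$, the approximating sum $S_N(f) = f(\lambda_1\gamma)1_{s_1} + \sum_{j=2}^{N} f(\lambda_j\gamma)1_{\Delta_j}$ differs from $S_N(p_n)$ by $\sum$-terms of the form $(f-p_n)(\lambda_j\gamma)1_{\Delta_j}$; since the $1_{\Delta_j}$ are idempotents with pairwise products zero (Lemma~\ref{lem:3}) and $\sum_j 1_{\Delta_j}(x)=1$ for $x\neq 0$, the quantity $S_N(f)-S_N(p_n)$ equals $(f-p_n)$ evaluated along $\gamma$ composed with a projection-indicator decomposition, and one estimates $\|S_N(f)-S_N(p_n)\|_{B(X)} \le \overline{k}\,\|f-p_n\|_{C([m,M])}$ uniformly in $N$, by the same argument proving Lemma~\ref{lem:10}/Lemma~\ref{lem:11}. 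Combined with the Cauchy property of $\{S_N(p_n)\}_N$ for each fixed $n$ (Theorem~\ref{thm:10}) and a standard three-$\epsilon$ argument (choose $n$ large so $\overline{k}\|f-p_n\|_{C([m,M])}<\epsilon/3$, then $N,N'$ large so $\|S_N(p_n)-S_{N'}(p_n)\|_{B(X)}<\epsilon/3$), the sums $S_N(f)$ form a Cauchy sequence in the Banach space $B(X)$, hence converge; the same estimate shows the limit is independent of the choices of $\lambda_j$ and of the approximating sequence, so $\int_{m}^{M} f(\lambda\gamma)\, d1_{\lambda}$ is well-defined.

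Finally I would identify the two limits. Passing $n\to\infty$ in $p_n(F)=\int_{m}^{M} p_n(\lambda\gamma)\, d1_{\lambda}$: the left side tends to $f(F)$ by Corollary~\ref{cor:6}, and the right side tends to $\int_{m}^{M} f(\lambda\gamma)\, d1_{\lambda}$ by the uniform (in $N$) estimate $\|S_N(f)-S_N(p_n)\|_{B(X)}\le\overline{k}\|f-p_n\|_{C([m,M])}$ just established, which lets one interchange the two limits $n\to\infty$ and $N\to\infty$. Hence $f(F)=\int_{m}^{M} f(\lambda\gamma)\, d1_{\lambda}$, as claimed. The main obstacle is the uniform-in-$N$ control of $S_N(f)-S_N(p_n)$: one must argue carefully that evaluating a continuous function along $\gamma$ on the mutually annihilating idempotent blocks $\{1_{\Delta_j}\}$ does not accumulate error across the $N$ blocks, which is precisely where the square-boundedness/positivity-preservation in the uniform spectral representation condition (via Lemma~\ref{lem:11}) is needed; everything else is the same bookkeeping as in Theorem~\ref{thm:10} and Theorem~\ref{thm:11}.
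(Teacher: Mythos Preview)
Your proposal is correct and follows essentially the same route as the paper's proof: approximate $f$ uniformly by polynomials $p_l$, use Theorem~\ref{thm:10} for each $p_l$, and control the Riemann-sum difference $S_N(f)-S_N(p_l)$ uniformly in $N$ via the pointwise observation that the $1_{\Delta_j}$ are mutually annihilating with $\sum_j 1_{\Delta_j}(x)=1$ for $x\neq 0$, then invoke Lemma~\ref{lem:11} (applied to the operator $\lambda\gamma$) to get $\|S_N(f)-S_N(p_l)\|_{B(X)}\le \overline{k}\,\|f-p_l\|_{C([m,M])}$, and finish with a three-$\epsilon$ argument. The only cosmetic difference is that the paper folds the existence of $\int_m^M f(\lambda\gamma)\,d1_\lambda$ and its identification with $f(F)$ into a single triangle-inequality step rather than treating them separately as you do.
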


\begin{proof} There exists a bounded interval \( [m,M] \) and
a positive number \( \bar{k} \) such that
\( ||f(\lambda \gamma)||_{B(X)} \leq \overline{k} ||f||_{C([m,M])} \) for
any \( f \in C([m,M]) \) and \( \lambda \in [m^{\ast},M^{\ast}] \) by Lemma~\ref{lem:11}
with \( 1_{m^{\ast}}(x)=0
\) for \( x \neq 0
\), \( 1_{M^{\ast}}=1 \), and
\( [m^{\ast},M^{\ast}] \subset [m,M] \). Let
\( \{p_{l}\} \) be a sequence of polynomial functions defined on \( [m,M] \) over the real field
converging uniformly to \( f \) in \( C([m,M]) \). By Definition~\ref{def:10}, Theorem~\ref{thm:10},
Corollary~\ref{cor:1}, and Lemma~\ref{lem:11}, for every \(
\epsilon > 0 \), there exist positive numbers \( N \) and \( N_{l}  \) such that for \( n > N_{l} \) and
\( l > N \) ,
\( ||p_{l}(F)-f(F)||_{B(X)} \leq \epsilon/3 \),
\( ||p_{l}(\lambda_{1}\gamma)1_{s_{1}}+\sum_{j=2}^{n}
p_{l}(\lambda_{j}\gamma)1_{\Delta_{j}}-p_{l}(F)||_{B(X)} \leq \epsilon/3 \),
and
\begin{eqnarray*}
&&\left|\left|f(\lambda_{1}\gamma)1_{s_{1}}+\sum_{j=2}^{n}
f(\lambda_{j}\gamma)1_{\Delta_{j}}-
p_{l}(\lambda_{1}\gamma)1_{s_{1}}-\sum_{j=2}^{n}
p_{l}(\lambda_{j}\gamma)1_{\Delta_{j}}\right|\right|_{B(X)}
\\ &\leq& \max_{m^{\ast} \leq \lambda \leq M^{\ast}} \left|\left|f(\lambda
\gamma)-p_{l}(\lambda \gamma)\right|\right|_{B(X)} \\ &\leq& \overline{k}
\left|\left|f-p_{l}\right|\right|_{C([m,M])} \\ &\leq& \epsilon/3.
\end{eqnarray*}
Therefore,
\begin{eqnarray*}
&&
\left|\left|f(\lambda_{1}\gamma)1_{s_{1}}+\sum_{j=2}^{n}
f(\lambda_{j}\gamma)1_{\Delta_{j}}-f(F)\right|\right|_{B(X)}
\\ &\leq&
\left|\left|f(\lambda_{1}\gamma)1_{s_{1}}+\sum_{j=2}^{n}
f(\lambda_{j}\gamma)1_{\Delta_{j}}-
p_{l}(\lambda_{1}\gamma)1_{s_{1}}-\sum_{j=2}^{n}
p_{l}(\lambda_{j}\gamma)1_{\Delta_{j}}\right|\right|_{B(X)}
\\
&+&\left|\left|p_{l}(\lambda_{1}\gamma)1_{s_{1}}+\sum_{j=2}^{n}
p_{l}(\lambda_{j}\gamma)1_{\Delta_{j}}-p_{l}(F)\right|\right|_{B(X)} +
\left|\left|p_{l}(F)-f(F)\right|\right|_{B(X)} \\ &\leq& \epsilon
\end{eqnarray*}
and the result holds.
\end{proof}

\begin{remark}\label{rem:6}

For the generalized real definite operator \( \tilde{F} \in B(X) \) with
\( \tilde{F}(0) \neq 0 \), i.e., \( \tilde{F}=F+\tilde{F}(0)e \) and
the projection indicator \( 1_{\lambda} \) corresponding to the null space of
\( F_{\lambda}^{+} \), the spectral resolution of the polynomial
\( p(\tilde{F}) \) given the condition in Theorem~\ref{thm:10} is
\begin{eqnarray*}
p(\tilde{F})=\int_{m}^{M}p\left[\lambda \gamma+\tilde{F}(0)e\right] d 1_{\lambda}.
\end{eqnarray*}
Similarly, the spectral integral of the operator \( f(\tilde{F}) \) given the
condition in Theorem~\ref{thm:12} is
\begin{eqnarray*}
f(\tilde{F})=\int_{m}^{M}f\left[\lambda \gamma+\tilde{F}(0)e\right] d 1_{\lambda}.
\end{eqnarray*}

\end{remark}

\subsection{Nonlinear spectral operators}

In \cite{dunford2}, the theory of the linear spectral operators has been discussed
thoroughly. In this subsection, the nonlinear spectral operators based on the
projection operator given in Definition~\ref{def:5} are defined and a basic result, Theorem~\ref{thm:13},
is given.

\begin{definition}\label{def:11}

Let \( X \)  be a normed space. A spectral projection \( E \) on \( (m,M] \) is an operator-value function from the subsets
\( \cup_{i=1}^{n}(a_{i},b_{i}] \) of \( (m,M] \) into \( B(X) \),\ \( m,M \in R,  (a_{i},b_{i}] \subset (m,M] \), with the following properties. \\
(a) \( E\{(a_{i},b_{i}]\} \) is a projection operator, i.e., \( E\{(a_{i},b_{i}]\}(x)=x \) if
\( x \in S \) and
\( E\{(a_{i},b_{i}]\}(x)=0 \) otherwise, where \( S \) containing \( 0 \) is some subset of \( X \). \\
(b) \( E(\phi)=0 \) and \( E\{(m,M]\}=I \). \\
(c)
\begin{eqnarray*}
E\{(a_{1},b_{1}] \cap (a_{2},b_{2}]\}= E\{(a_{1},b_{1}]\}  \circ E\{(a_{2},b_{2}] \}=E\{(a_{2},b_{2}]\}  \circ E\{(a_{1},b_{1}] \}.
\end{eqnarray*}
In addition, if \( (a_{1},b_{1}] \cap (a_{2},b_{2}]=\phi \), then
\begin{eqnarray*}
E\{(a_{1},b_{1}] \cup (a_{2},b_{2}]\}= E\{(a_{1},b_{1}]\}  +  E\{(a_{2},b_{2}] \} .
\end{eqnarray*}

\end{definition}

The existence of the spectral operators of interest is due to the following lemma. The proof of this lemma
 is analogous to Lemma~\ref{lem:8} and is not presented.

\begin{lemma}\label{lem:12}

Let \( X \) be a Banach space and \( \{s_{j}\} \) be a partition of a bounded interval \( [m,M] \) with \( m=s_{0} < s_{1} < \cdots < s_{n} = M \),
\( s_{j}-s_{j-1} < \epsilon_{n} \) and \( \epsilon_{n} \mathop{\longrightarrow}\limits_ {n \rightarrow\infty} 0 \). Then,
\( \sum_{j=1}^{n}f(\lambda_{j})E\{(s_{j-1},s_{j}]\} \) converges to an operator in \( B(X) \) with respect to the norm
topology \( ||\cdot||_{B(X)} \) and the convergence is  independent of the choice of the points \( \lambda_{j} \in
(s_{j-1},s_{j}] \) as \( n \rightarrow \infty \), where
 \( f \in C([m,M]) \).

\end{lemma}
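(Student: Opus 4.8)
The plan is to follow the proof of Lemma~\ref{lem:8}: produce an explicit candidate limit $G \in B(X)$ and estimate the distance from the partial sums to $G$ by the modulus of continuity of $f$ on $[m,M]$. First I would unpack Definition~\ref{def:11}. For $\lambda \in (m,M]$ let $S_\lambda$ denote the subset of $X$ onto which the projection operator $E\{(m,\lambda]\}$ projects, and set $S_m = \{0\}$. Property (c), applied to the nested intervals $(m,\lambda]$ and $(m,\mu]$ with $\lambda \le \mu$, gives $E\{(m,\lambda]\}\circ E\{(m,\mu]\} = E\{(m,\mu]\}\circ E\{(m,\lambda]\} = E\{(m,\lambda]\}$, so by Lemma~\ref{lem:3}(b) the sets $S_\lambda$ increase with $\lambda$. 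Property (c) applied to the disjoint union $(m,s_{j-1}] \cup (s_{j-1},s_j] = (m,s_j]$ gives $E\{(s_{j-1},s_j]\} = E\{(m,s_j]\} - E\{(m,s_{j-1}]\}$, and telescoping over $j$ together with property (b) (so that $E\{(m,m]\}=E(\phi)=0$ and $E\{(m,M]\}=I$) yields $\sum_{j=1}^{n} E\{(s_{j-1},s_j]\} = I$.

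Next I would fix $x \ne 0$ and set $\nu_x = \inf\{\lambda \in (m,M] : x \in S_\lambda\}$; this is well defined and lies in $[m,M]$ since $S_M = X$, and monotonicity of $S_\lambda$ gives $x \notin S_\lambda$ for $\lambda < \nu_x$ and $x \in S_\lambda$ for $\lambda > \nu_x$. Given a partition $\{s_j\}$ of $[m,M]$ with mesh $< \epsilon_n$, the operator $E\{(s_{j-1},s_j]\} = E\{(m,s_j]\} - E\{(m,s_{j-1}]\}$ maps $x$ to $x$ when $x \in S_{s_j}\setminus S_{s_{j-1}}$ and to $0$ otherwise (the case $x \in S_{s_{j-1}}\setminus S_{s_j}$ being excluded by monotonicity). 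Since the images are each $0$ or $x$ and sum to $I(x) = x \ne 0$, exactly one index $j(x)$ contributes, and for that index $s_{j(x)-1} \le \nu_x \le s_{j(x)}$, hence $|\lambda_{j(x)} - \nu_x| \le s_{j(x)} - s_{j(x)-1} < \epsilon_n$ for every admissible $\lambda_{j(x)} \in (s_{j(x)-1}, s_{j(x)}]$. Therefore $\left(\sum_{j=1}^n f(\lambda_j) E\{(s_{j-1},s_j]\}\right)(x) = f(\lambda_{j(x)})\,x$, and the partial sum sends $0$ to $0$.

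Finally I would define $G$ by $G(x) = f(\nu_x)\,x$ for $x \ne 0$ and $G(0) = 0$; since $|f(\nu_x)| \le \|f\|_{C([m,M])}$, one gets $p(G) \le \|f\|_{C([m,M])} < \infty$, so $G \in B(X)$, and $G$ depends only on $f$ and $E$, not on the partitions or the sample points. With $T_n = \sum_{j=1}^n f(\lambda_j)E\{(s_{j-1},s_j]\}$ the previous paragraph gives $\|T_n(x) - G(x)\|_X = |f(\lambda_{j(x)}) - f(\nu_x)|\,\|x\|_X$ for $x \ne 0$. Uniform continuity of $f$ on the compact interval $[m,M]$ then furnishes, for each $\varepsilon > 0$, a $\delta > 0$ with $|f(s) - f(t)| < \varepsilon$ whenever $|s-t| < \delta$; once $\epsilon_n < \delta$ this gives $\|T_n - G\|_{B(X)} \le \varepsilon$, so $T_n \to G$ in $\|\cdot\|_{B(X)}$ as $n \to \infty$, independently of the choices of $\lambda_j$. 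I expect the only delicate point to be the bookkeeping of the middle paragraph — isolating the unique surviving summand and locating $\nu_x$ in the corresponding subinterval — the rest being the same uniform-continuity estimate already used in Lemma~\ref{lem:8}.
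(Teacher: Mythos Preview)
Your proposal is correct and takes essentially the same approach the paper intends: the paper omits the proof, stating only that it is ``analogous to Lemma~\ref{lem:8},'' and your argument is precisely that analogue --- identify the unique jump point $\nu_x$ (playing the role of $\lambda_x$ in Lemma~\ref{lem:8}), define the limit operator $G(x)=f(\nu_x)x$, and control the error by the mesh. The only new ingredient relative to Lemma~\ref{lem:8} is replacing the Lipschitz bound for $\lambda\mapsto\lambda$ by uniform continuity of $f$ on $[m,M]$, which you handle correctly.
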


Based on the above lemma, the resulting limit operator can be defined.

\begin{definition}\label{def:12}

Let \( X \) be a Banach space and
\( \{s_{j}\} \) be a partition of a bounded interval \( [m,M] \) with \( m=s_{0} < s_{1} < \cdots < s_{n} = M \),
\( s_{j}-s_{j-1} < \epsilon_{n} \) and \( \epsilon_{n}\mathop{\longrightarrow}\limits_ {n \rightarrow\infty}  0 \). Then, the limit
operator \( F \) of  \( \sum_{j=1}^{n}f(\lambda_{j})E\{(s_{j-1},s_{j}]\} \) as \( n \rightarrow \infty \) with respect to the norm
topology
\( ||\cdot||_{B(X)} \) is denoted as
\begin{eqnarray*}
F=\int_{m}^{M}f(\lambda)d E,
\end{eqnarray*}
where \( \lambda_{j} \in  (s_{j-1},s_{j}] \)  and \( f \in C([m,M]) \). The operator \( F \) is referred to as the  nonlinear spectral operator with respect to
the spectral projection  \( E \) on \( (m,M] \) and the function \( f \). The class of nonlinear
spectral operators with respect to the spectral projection  \( E \) on \( (m,M] \) and any function \( f \in C([m,M]) \) is denoted as
\( S_{E,C([m,M])}(X) \).

\end{definition}

\begin{theorem}\label{thm:13}

\(  S_{E,C([m,M])}(X) \) is a subspace of \( B(X) \), where \( X \) is a Banach space. Further, If \( E\{(m,\bar{\lambda}]\}- E\{(m,\underline{\lambda}]\} \neq 0 \) for
any \( \underline{\lambda},\bar{\lambda} \in (m,M] \) and \( \underline{\lambda} < \bar{\lambda} \), then \(  S_{E,C([m,M])}(X) \)  is a Banach space.

\end{theorem}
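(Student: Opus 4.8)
The plan is to identify $S_{E,C([m,M])}(X)$ with an isometric copy of the Banach space $C([m,M])$ via the map $T\colon f\mapsto\int_{m}^{M}f(\lambda)\,dE$. Granting this, the subspace assertion is just linearity of $T$, and completeness follows from that of $C([m,M])$ once the non-degeneracy hypothesis is used to make $T$ injective with closed range. Throughout I would use that $B(X)$ is a Banach space (Theorem~\ref{thm:3}, since $X$ is Banach) and that $\int_{m}^{M}f(\lambda)\,dE$ exists in $B(X)$ for every $f\in C([m,M])$ (Lemma~\ref{lem:12}, Definition~\ref{def:12}).

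\emph{Subspace.} Here $0=\int_{m}^{M}0\,dE\in S_{E,C([m,M])}(X)\subseteq B(X)$. For $f_{1},f_{2}\in C([m,M])$ and scalars $\alpha,\beta$, any approximating sum for the same partition and tags satisfies $\sum_{j}(\alpha f_{1}+\beta f_{2})(\lambda_{j})E\{(s_{j-1},s_{j}]\}=\alpha\sum_{j}f_{1}(\lambda_{j})E\{(s_{j-1},s_{j}]\}+\beta\sum_{j}f_{2}(\lambda_{j})E\{(s_{j-1},s_{j}]\}$; letting the mesh tend to $0$ and using continuity of the vector-space operations of $B(X)$ yields $\int_{m}^{M}(\alpha f_{1}+\beta f_{2})\,dE=\alpha\int_{m}^{M}f_{1}\,dE+\beta\int_{m}^{M}f_{2}\,dE$, which lies in $S_{E,C([m,M])}(X)$ because $\alpha f_{1}+\beta f_{2}\in C([m,M])$. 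Hence $T$ is linear and $S_{E,C([m,M])}(X)$ is a subspace of $B(X)$.

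\emph{Norm identity.} The core step is to prove $\|\int_{m}^{M}f(\lambda)\,dE\|_{B(X)}=\|f\|_{C([m,M])}$ for all $f\in C([m,M])$ under the non-degeneracy hypothesis. The structural fact is that, by the additivity axioms in Definition~\ref{def:11}, $\sum_{j=1}^{n}E\{(s_{j-1},s_{j}]\}=E\{(m,M]\}=I$ for every partition $m=s_{0}<\cdots<s_{n}=M$; since each $E\{(s_{j-1},s_{j}]\}$ is a projection operator (its value at any $x$ is $x$ or $0$), for each $x\neq0$ exactly one index, say $j(x)$, has $E\{(s_{j(x)-1},s_{j(x)}]\}(x)=x$, all other summands vanish, and $E\{(s_{j-1},s_{j}]\}(0)=0$. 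Thus the approximating operator $F_{n}=\sum_{j}f(\lambda_{j})E\{(s_{j-1},s_{j}]\}$ acts by $F_{n}(x)=f(\lambda_{j(x)})x$ on $x\neq0$ and $F_{n}(0)=0$, so $\|F_{n}\|_{B(X)}\le\|f\|_{C([m,M])}$, giving the upper bound $\|\int_{m}^{M}f(\lambda)\,dE\|_{B(X)}\le\|f\|_{C([m,M])}$ in the limit (no non-degeneracy needed). For the reverse inequality, assume $f\neq0$, fix $\varepsilon>0$, pick $\lambda^{\ast}$ with $|f(\lambda^{\ast})|=\|f\|_{C([m,M])}$, and by continuity choose $\underline{\lambda}<\bar{\lambda}$ in $(m,M]$ with $|f|>\|f\|_{C([m,M])}-\varepsilon$ on $(\underline{\lambda},\bar{\lambda}]$. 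By additivity, $E\{(\underline{\lambda},\bar{\lambda}]\}=E\{(m,\bar{\lambda}]\}-E\{(m,\underline{\lambda}]\}\neq0$, so there is $x_{0}\neq0$ with $E\{(\underline{\lambda},\bar{\lambda}]\}(x_{0})=x_{0}$. Evaluating $\int_{m}^{M}f(\lambda)\,dE$ along partitions each containing $\underline{\lambda}$ and $\bar{\lambda}$ among their points (admissible for large $n$; by Lemma~\ref{lem:12} the limit is the same for every mesh-zero partition sequence), $(\underline{\lambda},\bar{\lambda}]$ is a disjoint union of partition subintervals, so $E\{(\underline{\lambda},\bar{\lambda}]\}$ is the sum of the corresponding $E\{(s_{j-1},s_{j}]\}$; since $\sum$ of several copies of $x_{0}$ equals $x_{0}$ only if one copy occurs, the index $j(x_{0})$ must satisfy $(s_{j(x_{0})-1},s_{j(x_{0})}]\subseteq(\underline{\lambda},\bar{\lambda}]$, hence $\lambda_{j(x_{0})}\in(\underline{\lambda},\bar{\lambda}]$ and $F_{n}(x_{0})=f(\lambda_{j(x_{0})})x_{0}$ with $|f(\lambda_{j(x_{0})})|>\|f\|_{C([m,M])}-\varepsilon$. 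As norm convergence in $B(X)$ forces pointwise convergence, $\|(\int_{m}^{M}f(\lambda)\,dE)(x_{0})\|_{X}\ge(\|f\|_{C([m,M])}-\varepsilon)\|x_{0}\|_{X}$, so $\|\int_{m}^{M}f(\lambda)\,dE\|_{B(X)}\ge\|f\|_{C([m,M])}-\varepsilon$, and $\varepsilon\downarrow0$ completes the identity.

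\emph{Completeness and the main obstacle.} The norm identity makes $T\colon C([m,M])\to S_{E,C([m,M])}(X)$ a linear isometry, hence injective, and it is onto by the definition of $S_{E,C([m,M])}(X)$; thus $T$ is an isometric isomorphism from the Banach space $C([m,M])$ onto $S_{E,C([m,M])}(X)$, which is therefore complete, that is, a Banach space (equivalently, a closed subspace of the Banach space $B(X)$). I expect the real work to be the lower bound of the norm identity: one must squeeze out of $\sum_{j}E\{(s_{j-1},s_{j}]\}=I$ that each nonzero vector is carried by a single partition subinterval, and then arrange the partitions so that the carrying subinterval of a witness vector $x_{0}$ for $E\{(\underline{\lambda},\bar{\lambda}]\}\neq0$ lies inside $(\underline{\lambda},\bar{\lambda}]$ — this is exactly where the hypothesis $E\{(m,\bar{\lambda}]\}-E\{(m,\underline{\lambda}]\}\neq0$ enters. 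The upper bound, the linearity, and the completeness-by-isometry argument are routine.
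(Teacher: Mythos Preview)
Your argument is correct and rests on the same two facts the paper uses: the upper bound $\bigl\|\int f\,dE\bigr\|_{B(X)}\le\|f\|_{C([m,M])}$ (each nonzero $x$ is carried by exactly one partition subinterval, so the approximating sum acts on $x$ as a scalar multiple) and a pointwise lower bound extracted from the non-degeneracy hypothesis via a witness vector. The difference is organizational. You isolate these into the single statement that $T\colon f\mapsto\int f\,dE$ is an isometry and then deduce completeness in one line from the completeness of $C([m,M])$. The paper instead argues closedness directly: given $F_{n}\to F$ in $B(X)$ with $F_{n}=\int f_{n}\,dE$, it shows $\{f_{n}\}$ is Cauchy in $C([m,M])$ by producing, for each $\lambda\in(m,M]$, a vector $x_{\lambda}$ on which the approximating sum for $F_{l}-F_{m}$ evaluates to $(f_{l}(\lambda)-f_{m}(\lambda))x_{\lambda}$ exactly (choosing tags that hit $\lambda$), then obtains $f=\lim f_{n}$ and checks $F_{n}\to\int f\,dE$ via the upper bound. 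Your packaging is a bit cleaner and yields the extra information that $S_{E,C([m,M])}(X)$ is isometrically isomorphic to $C([m,M])$; the paper's version stays closer to the definition of a closed subspace and, by tagging at $\lambda$ itself, avoids the $\varepsilon$--continuity manoeuvre you use for the lower bound.
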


\begin{proof}

\( S_{E,C([m,M])}(X)  \)  is a subspace of \( B(X) \) because for the operators \( F_{1},F_{2}  \in S_{E,C([m,M])}(X) \) corresponding to the
functions \( f_{1} \) and \( f_{2} \), respectively, and \( \alpha \in K \),
\begin{eqnarray*}
&& (\alpha F_{1}+F_{2})\\
&=&  \lim_{n \rightarrow \infty} \sum_{j=1}^{n}\alpha f_{1}(\lambda_{j}) E\left\{(s_{j-1},s_{j}]\right\}\\
&&+
 \lim_{n \rightarrow \infty} \sum_{j=1}^{n} f_{2}(\lambda_{j})E\left\{(s_{j-1},s_{j}]\right\}\\
 &=& \lim_{n \rightarrow \infty} \sum_{j=1}^{n}\left[\alpha f_{1}(\lambda_{j}) +f_{2}(\lambda_{j})\right]E\left\{(s_{j-1},s_{j}]\right\} \\
 &=& \int_{m}^{M}\left[\alpha f_{1}(\lambda)+ f_{1}(\lambda)\right]dE,
 \end{eqnarray*}
 and \( \alpha f_{1} +f_{2} \in C([m,M]) \).

 To prove the completeness of \(  S_{E,C([m,M])}(X) \), let \( F_{n}   \mathop{\longrightarrow}\limits_ {n \rightarrow\infty} F \), where
 \( F_{n} \in S_{E,C([m,M])}(X) \) and \( F \in B(X) \). Then \( \{F_{n}\} \) is Cauchy. Further, if
 the required condition holds, there exists a \( x_{\lambda} \) depending on \( \lambda \) such that for any \( \lambda \in (m,M] \) ,
 \begin{eqnarray*}
 &&\left|\left|\left\{\sum_{j=1}^{n}\left[f_{l}(\lambda_{j})-f_{m}(\lambda_{j})\right] E\left\{(s_{j-1},s_{j}]\right\}
 \right\}(x_{\lambda})\right|\right|_{X}  \\
 &=& \left|\left|\left[f_{l}(\lambda)-f_{m}(\lambda)\right]x_{\lambda}\right|\right|_{X} \\
 &=&\left|f_{l}(\lambda)-f_{m}(\lambda)\right|\left|\left|x_{\lambda}\right|\right|_{X},
 \end{eqnarray*}
 where \( f_{l} \) and \( f_{m} \) are the functions in \( C([m,M]) \) corresponding to the
 spectral operators \( F_{l} \) and \( F_{m} \), respectively.
Then by letting \( n \rightarrow \infty \) in the above equation,
 for any \( \epsilon > 0 \), there exists a positive integer \( N \) such
 that for \( l,m > N \),
 \begin{eqnarray*}
 \left|\left|f_{l}-f_{m}\right|\right|_{C([m,M])} \leq \epsilon.
 \end{eqnarray*}
 Hence \( \{f_{n}\} \) is Cauchy and there exists a function
 \( f  \in C([m,M]) \) such that \( f_{n} \mathop{\longrightarrow}\limits_ {n \rightarrow\infty}  f \) with respect to \( ||\cdot||_{C([m,M])} \)
owing to the completeness of \( C([m,M]) \). Then,
 \( F_{n} \mathop{\longrightarrow}\limits_ {n \rightarrow\infty}  \int_{m}^{M}fdE \) because for \( x \neq 0 \),
 \begin{eqnarray*}
&& \left|\left|\left[F_{n}-\int_{m}^{M}f(\lambda)dE\right](x)\right|\right|_{X}  \\
&=& \left|\left|\left\{\int_{m}^{M}\left[f_{n}(\lambda)-f(\lambda)\right]dE\right\}(x)\right|\right|_{X} \\
&\leq& \left|\left|f_{n}-f\right|\right|_{C([m,M])}\left|\left|x\right|\right|_{X}.
\end{eqnarray*}
Therefore, \( F=\int_{m}^{M}fdE \) and \( F \in S_{E,C([m,M])}(X) \), i.e., \( S_{E,C([m,M])}(X)  \) being closed.
\end{proof}

\begin{remark}\label{rem:7}

As \( \gamma=I \), the bounded generalized real definite operators with the spectral representation
fall in \( S_{E,C([m,M])}(X) \). Note that a more general class of nonlinear spectral
operators can be defined as the limiting operators of  the operators \( \sum_{j=1}^{n}f(\lambda_{j})(r\circ E\{(s_{j-1},s_{j}]\}) \)
as \( n \rightarrow \infty \) with respect to the norm
topology
\( ||\cdot||_{B(X)} \) and can be denoted as
\begin{eqnarray*}
F=\int_{m}^{M}f(\lambda)d (r \circ E).
\end{eqnarray*}
Moreover, Lemma~\ref{lem:12} and Theorem~\ref{thm:13} can be generalized for the class of the nonlinear spectral operators.

\end{remark}

\end{document}